\def\CC{{\mathbb C}} 
\def\RR{{\mathbb R}}
\theoremstyle{definition}
\newtheorem{definition}{Definition}[section]
\newtheorem{example}[definition]{Example}
\theoremstyle{plain}
\newtheorem{theorem}[definition]{Theorem}
\newtheorem{lemma}[definition]{Lemma}
\newtheorem{proposition}[definition]{Proposition}
\newtheorem{corollary}[definition]{Corollary}
\newtheorem*{corollary*}{Corollary}
\newtheorem{question}{Question}
\begin{document}

\title{Conformal Rigidity and Spectral Embeddings \\of Graphs}

\author[]{Jo\~ao  Gouveia}
\address{CMUC, Department of Mathematics, University of Coimbra, 3001-454 Coimbra, Portugal} 
\email{jgouveia@mat.uc.pt}
\thanks{J.G. is partially supported by Centro de Matem\'atica da Universidade de Coimbra (CMUC) - UID/MAT/00324. 
}

\author[]{Stefan Steinerberger}
\address{Department of Mathematics, University of Washington, Seattle, WA 98195, USA} 
\email{steinerb@uw.edu}

\author[]{Rekha R. Thomas}
\address{Department of Mathematics, University of Washington, Seattle, WA 98195, USA} 
\email{rrthomas@uw.edu}

\begin{abstract}
    We investigate the structure of conformally rigid graphs. Graphs are conformally rigid if introducing edge weights cannot increase (decrease) the second (last) eigenvalue of the Graph Laplacian. Edge-transitive graphs and distance-regular graphs are known to be conformally rigid. We establish new results using the connection between conformal rigidity and edge-isometric spectral embeddings of the graph. All $1$-walk regular graphs are conformally rigid, a consequence of a stronger property of their embeddings. Using symmetries of the graph, we establish two related characterizations of when a vertex-transitive graph is conformally rigid. This provides a necessary and sufficient condition for a Cayley graph on an abelian group to be conformally rigid. As an application we exhibit an infinite family of conformally rigid circulants. Our symmetry technique can be interpreted in the language of semidefinite programming which provides another criterion for conformal rigidity in terms of edge orbits. The paper also describes a number of explicit conformally rigid graphs whose conformal rigidity is not yet explained by the existing theory.
\end{abstract}

\maketitle

\section{Introduction}
\subsection{Introduction.}
The purpose of this paper is to study graphs that exhibit a certain type of spectral symmetry known as \textbf{conformal rigidity} (first introduced in \cite{steinthomas}). We start with the relevant terms and motivate the notion.
Let $G=(V,E)$ be a finite, connected, undirected, unweighted graph on $n = |V|$ vertices $\left\{v_1, \dots, v_n\right\}$. Any such graph admits a Graph Laplacian
$$ L = D - A,$$
where $D \in \mathbb{R}^{n \times n}$ is the diagonal matrix, $D_{ii} = \deg(v_i)$, and
$A \in \mathbb{R}^{n \times n}$ is the adjacency matrix encoding the edge structure
$$ A_{ij} = \begin{cases}
    1 \qquad &\mbox{if}~(i,j) \in E \\
    0 \qquad &\mbox{otherwise.}
\end{cases}$$

The matrix $L \in \mathbb{R}^{n \times n}$ is symmetric and positive semidefinite. If $x:V \rightarrow \mathbb{R}$ is a vector, then the quadratic form induced by $L$ is very instructive, it is
$$ \left\langle x, Lx \right\rangle = \sum_{(i,j) \in E} (x_i - x_j)^2.$$
This quantity is sometimes known as the {\em Dirichlet energy}. If the graph is connected, which we always assume throughout the paper, then $L$ has eigenvalue 0 with multiplicity one, the corresponding eigenvector is the constant vector $\mathbf{1} \in \mathbb{R}^n$. The second smallest eigenvalue $\lambda_2$ is known as the \textbf{algebraic connectivity} of the graph. It plays a central role in Spectral Graph Theory and is intimately connected to the overall connectivity of the graph.

\begin{center}
    \begin{figure}[h!]
    \begin{tikzpicture}
            \node at (-4,0) {\includegraphics[width=0.3\textwidth]{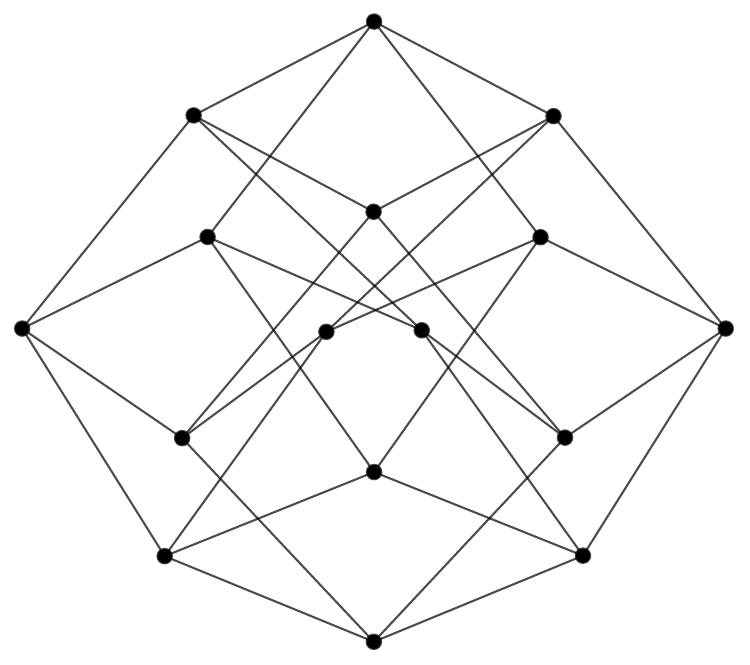}};
        \node at (0,0) {\includegraphics[width=0.25\textwidth]{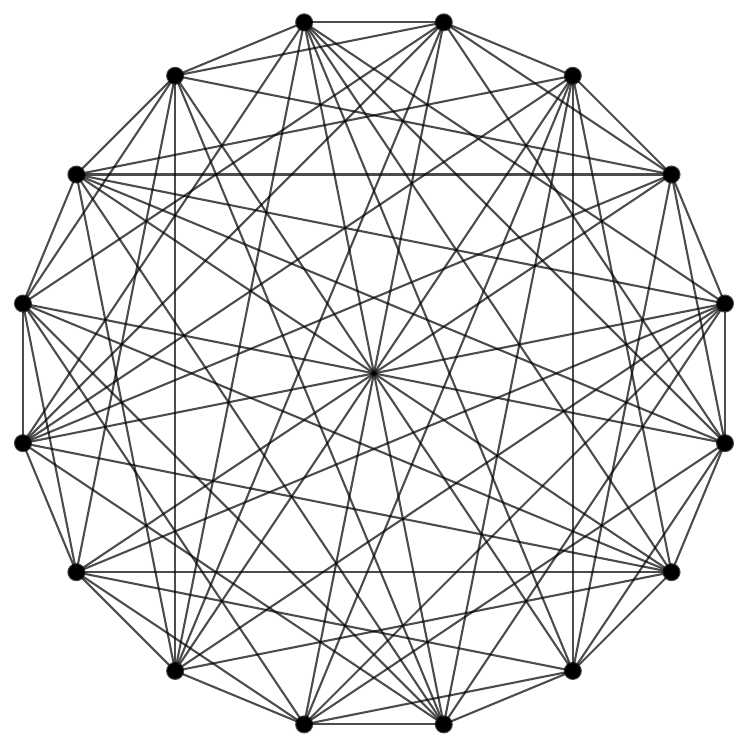}};
        \node at (4,0) {\includegraphics[width=0.25 \textwidth]{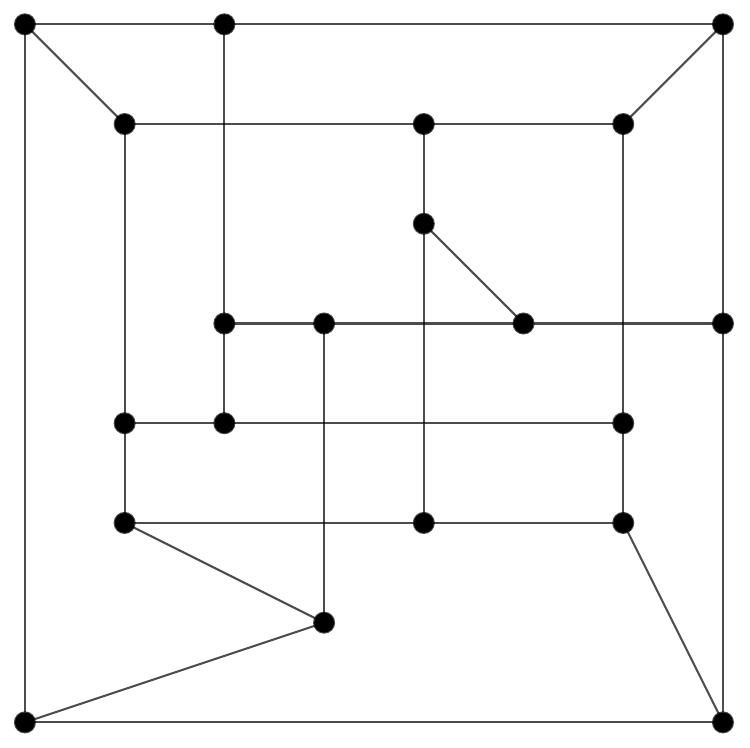}};
    \end{tikzpicture}
    \caption{Three conformally rigid graphs. Left: Hoffman graph. Middle: complement of the Shrikhande graph. Right: CNG 6B.}
    \label{fig:conformally rigid}
    \end{figure}
\end{center}

Motivated by that central role, there is a natural question of whether it is possible to increase $\lambda_2$ by changing the weights on the edges (since we start with combinatorial graphs, the initial (implicit) weight on each edge is 1); we will always assume these weights to be non-negative. If we assume that each edge $(i,j) \in E$ has a weight $w_{ij} = w_{ji} \geq 0$, then here is a natural generalization of the Laplacian: the degree is replaced by the sum of the weights 
$$ D(w)_{ii} = \sum_{(i,j) \in E} w_{ij}$$
while the adjacency matrix is given by 
$$ A(w)_{ij} = \begin{cases}
    w_{ij} \qquad &\mbox{if}~(i,j) \in E \\
    0 \qquad &\mbox{otherwise.}
\end{cases}$$
This leads to a Laplacian matrix $L(w) = D(w) - A(w)$ whose eigenvalues depend continuously on the weights. The smallest eigenvalue remains unchanged $\lambda_1(L(w)) = 0$, and since the graph is connected, the second eigenvalue $\lambda_2(L(w))$ is positive. There is a natural scaling symmetry: if we increase all the weights by a factor of 2, then all the eigenvalues of $L(w)$ increase by a factor of 2. Accounting for this scaling, we can now ask whether there is a way of increasing the algebraic connectivity by changing the weights; this amounts to studying
$$ \max\left\{ \lambda_2(L(w)): w_{ij} \geq 0, \, \sum_{i,j=1}^{n} w_{ij} = 2|E| \right\}.$$
The definition of $A(w)$ allows for the weight of an edge to be set to zero (this amounts to erasing the edge), however, it is not possible to create a new edge that was not there before, one can only put weights on the edges of the graph.
This and related notions have been intensively studied, we refer to 
Boyd-Diaconis-Parrilo-Xiao \cite{boyd-diaconis-parrilo-xiao}, G{\"o}ring-Helmberg-Wappler \cite{goering-helmberg-wappler}, G{\"o}ring-Helmberg-Reiss \cite{goering-helmberg-reiss, goering-helmberg-reiss-spectralwidth}, Sun-Boyd-Xiao-Diaconis \cite{sun-boyd-xiao-diaconis} and references therein. We note that this problem has a continuous analogue that has been studied for a much longer time; for the problem of optimizing eigenvalues within a conformal class of metrics on a manifold, we refer to the 1970 paper of Hersch \cite{hersch} (see also El Soufi-Ilias \cite{el}). This connection motivates the terminology of `conformally rigid' since two conformally equivalent metrics on a manifold may be considered the natural analogue of two graphs with the same edge structure but different edge weights.  In the discrete setting, one can think of this as the problem of optimizing the constant in a discrete Wirtinger inequality; we refer to a paper by Fan-Taussky-Todd \cite{fan} for the special case of the cycle graph $C_n$. Additional ways to think about the notion of conformal rigidity and more connections to the literature are given in \cite{steinthomas}.

\subsection{Conformal Rigidity.} 
If we solve the optimization problem $\lambda_2(L(w)) \rightarrow \max$, one naturally expects no particular structure in the optimal weights $w$: they are the solution of a nontrivial optimization problem that depends in a complicated way on the graph. However, it is also to be expected that some graphs are so structured that changing the weights is actually not going to increase $\lambda_2(L(w))$, 
the unweighted graph is the most `algebraically connected' representative in its class.
More precisely, we say that a graph $G=(V,E)$ is \textbf{lower conformally rigid} if, for all choices of non-negative weights normalized to $\sum_{i,j=1}^{n} w_{ij} = 2|E|$
$$ \lambda_2(L(w)) \leq \lambda_2(L(\mathbf{1})),$$
where $\mathbf{1}$ is choosing the weights $w_{ij} \equiv 1$. Analogously, we say that a graph is \textbf{upper conformally rigid} if, again for all admissible choices of weights
$$ \lambda_n(L(w)) \geq \lambda_n(L(\mathbf{1})).$$
A graph is \textbf{conformally rigid} if it is both lower conformally rigid and upper conformally rigid. This terminology slightly extends the one introduced in \cite{steinthomas} where only `conformally rigid' was defined. We found this distinction, corresponding to the two non-trivial endpoints of the spectrum of $L$, to be helpful.

\subsection{Known Results.}
The paper \cite{steinthomas} contains a number of results concerning the structure of conformally rigid graphs. We  summarize those results, and explain how our new contributions complement and extend them.  The existing results describe several classes of graphs that are conformally rigid, summarized in Fig. \ref{fig:summary}.
\begin{center}
    \begin{figure}[h!]
        \centering
    \begin{tikzpicture}
    \node at (0,0) {distance-regular};
        \node at (-2,1) {distance-transitive};
    \node at (2,1) {strongly regular};
    \draw[->] (2,0.8) -- (1.3, 0.2);
        \draw[->] (-2,0.8) -- (-1.3, 0.2);
    \draw [] (-1.75,-1.2) -- (1.8,-1.2) -- (1.8,-0.8) -- (-1.75, -0.8) -- (-1.75, -1.2);
    \node at (0,-1) {\textsc{ conformally rigid}};
      \draw[->] (0,-0.3) -- (0,-0.7);
      \node at (-3,0) {arc-transitive};
          \draw[->] (-2,0.8) -- (-2.5, 0.2);
            \node at (-3,-2) {vertex- and edge-transitive};
                \draw[->] (-3,-0.2) -- (-3, -1.7);
                    \draw[->] (-0.8,-2) -- (-0.2, -2);
                        \draw[->] (0.5, -1.7) -- (0.5, -1.3);
           \node at (1,-2) {edge-transitive};
        \node at (4, 0) {Cayley};
        \draw[->, dashed] (3.8, -0.2) -- (2,-0.8);
        \node at (3.8, -0.6) {\tiny \cite[Theorem 2.3]{steinthomas}};
        \node at (-0.7, -1.5) {\tiny \cite[Proposition 2.1]{steinthomas}};
      \node at (-1.1, -0.5) {\tiny  \cite[Theorem 2.2]{steinthomas}};
    \end{tikzpicture}
        \caption{Summary of the main results from \cite{steinthomas}.}
        \label{fig:summary}
    \end{figure}
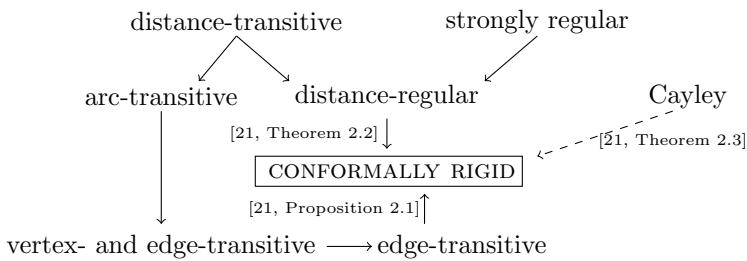
\end{center}
\vspace{-15pt}
Sometimes conformal rigidity is the consequence of a fairly simple underlying structure such as \textit{edge-transitivity}. A graph is edge-transitive if for any two edges there exists a graph automorphism mapping one edge to the other. This simple criterion accounts for many conformally rigid graphs, including cycles, complete graphs, complete bipartite graphs and many others. Among the examples we know of, the smallest conformally rigid graph that is not also edge-transitive, is the {\em Hoffman graph} on 16 vertices (Figure~\ref{fig:conformally rigid}). In \S~4, we develop a more advanced theory for the conformal rigidity of vertex-transitive graphs.\\

Distance-regular graphs are highly structured graphs from Algebraic Graph Theory. They are
regular graphs with the property that for any two vertices $a, b \in V$, the number $\# \left\{v \in V: d(a,v) = i \wedge d(b,v) = j \right\}$ depends only on $i,j$ and $d(a,b)$, and not on the vertices $a$ and $b$ themselves. Theorem 2.2 in \cite{steinthomas} shows that all distance-regular graphs are conformally rigid; this is done by appealing to spectral embeddings. In \S~2 we build a much more comprehensive theory that clarifies the roles of spectral embeddings.

\begin{figure}[h!]
        \begin{tikzpicture}
      \node at (0,0) {  \includegraphics[width=0.25\textwidth]{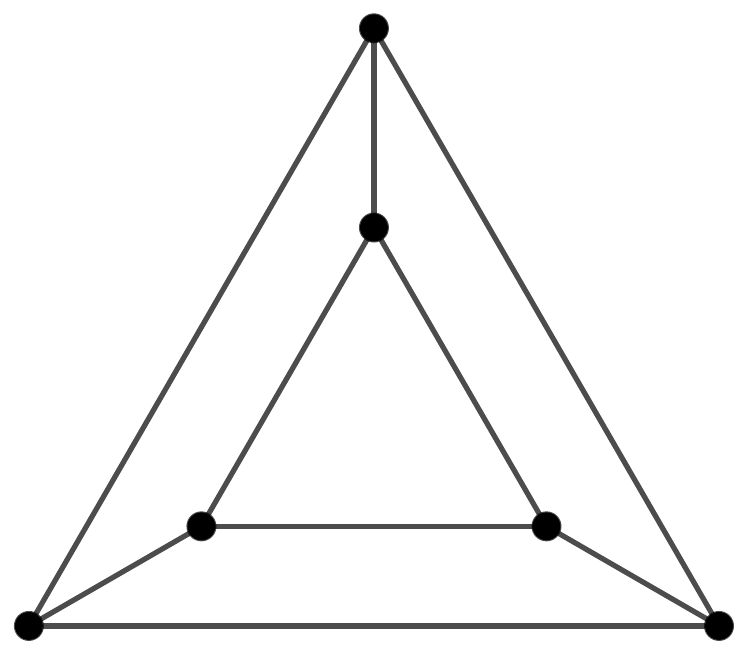}};
       \node at (6,0) {  \includegraphics[width=0.25\textwidth]{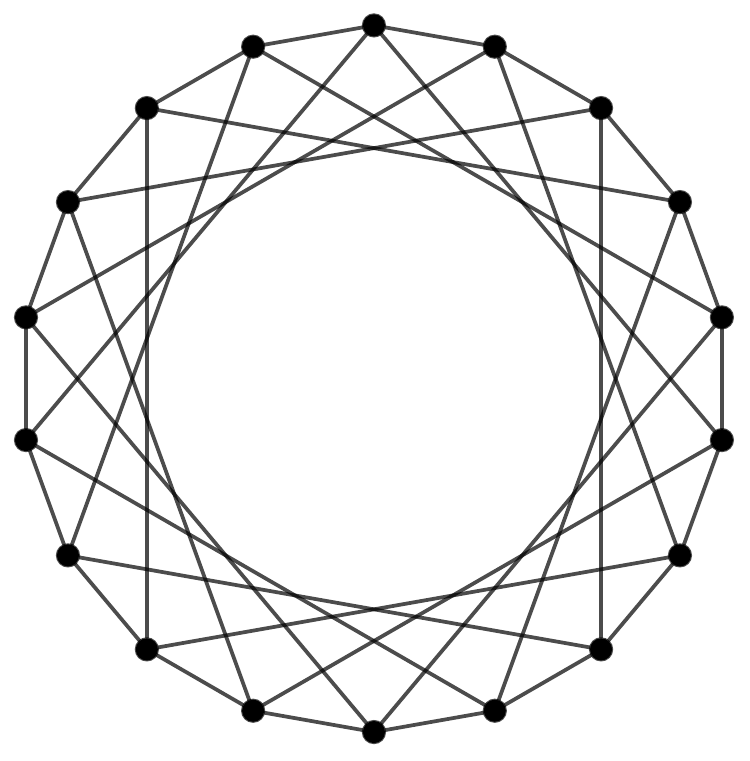}};
       \end{tikzpicture}
        \caption{Left: The triangular prism graph is  Cayley and not conformally rigid. Right: a Cayley graph on $\mathbb{Z}_{18}$ (generated by $S=\left\{-5,-1,1,5\right\}$) that is conformally rigid and \textit{not} edge-transitive.}
        \label{fig:cayley}
    \end{figure}

Cayley graphs can, but need not be, conformally rigid (see Fig. \ref{fig:cayley}). Theorem 2.3 \cite{steinthomas} presents a sufficient criterion for when a Cayley graph is conformally rigid and asks whether it is also necessary. 
Our new Theorem~\ref{thm:necc and suff condn cayley} provides a necessary and sufficient for the conformal rigidity of an abelian Cayley graph.

\subsection{Main Results} 
This section gives an overview of our main results.
Our central tool is the notion of a \textit{spectral embedding} (introduced in \S 2) of the graph. It was established in \cite{steinthomas} that conformal rigidity is equivalent to the existence of a suitable \textit{edge-isometric embedding}, meaning that all edges of the graph in the embedding have a fixed constant length. In \S 2 we use spectral embeddings to study the behavior of conformal rigidity under Cartesian products (Theorem~\ref{thm:cartesian products}).

\S 3 is dedicated to \textit{1-walk-regular graphs} which we show can be characterized by the edge-isometry of their \textit{canonical embeddings} (Theorem~\ref{thm:1-walk regular}).  As a  consequence, they are conformally rigid (a result that was also pointed out to us by Martin Winter). Many conformally rigid graphs that were not explained by the results in \cite{steinthomas} turn out to be walk-regular; see \S 3. 

\S 4 is dedicated to the study of \textit{symmetrized embeddings}. Using this language, we can give a new proof that edge-transitive graphs are conformally rigid (Lemma~\ref{lem:edge-transitive}). However, the framework is more robust and allows us to treat vertex-transitive graphs. If a graph is vertex-transitive with respect to a subgroup of its automorphism group, then conformal rigidity can be certified
by the existence of a suitable eigenvector whose symmetrization is proportional to the constant  vector $\mathbf{1}$ (Corollary~\ref{cor:generaleigenvectorcriterion}). This allows us to confirm the conformal rigidity of many classes by reducing it to a problem of finding the right eigenvector. Corollary~\ref{cor:cr_eq_formulation} provides a more general condition that does not rely on a specific eigenvector.

\S 5 applies the tools from \S 4 to \textit{Cayley graphs}. We  obtain a necessary and sufficient condition for a Cayley graph to be conformally rigid (Theorem~\ref{thm:necc and suff condn cayley}). This result, in particular, provides a nearly complementary converse to a sufficient criterion proved in \cite{steinthomas} using a very different setup.  We  construct an infinite family of conformally rigid circulants, answering a question raised in \cite{steinthomas}.

In \S~6 we interpret symmetrized embeddings in terms of the semidefinite program that underlies conformal rigidity. This provides another criterion for conformal rigidity in the presence of vertex-transitivity (Theorem~\ref{thm:twoedge}).

It is worth emphasizing that when it comes to conformal rigidity of graphs, there are currently two different types of results. The first type of result shows that a graph property (e.g. edge-transitivity, 1-walk regular, distance-regular) implies conformal rigidity. The other type of results show that in the presence of  certain properties (e.g. vertex-transitivity, being Cayley),  conformal rigidity can be {\em certified} by the existence of particular objects such as 
a suitable embedding or eigenvector. However, these certificates do not \textit{explain} the conformal rigidity of the graph. This is the case for many Cayley graphs $\textup{Cay}(\Gamma,S)$ where we can now certify the desired property but it is less clear what property of the group $\Gamma$ or the generator set $S$ has led to the emergence of the property in the first place.

\subsection{Exceptional Examples.} When trying to understand conformally rigid graphs, the authors have found it tremendously helpful to keep track of exceptional examples whose conformal rigidity is not explained by the existing theory. The paper \cite{steinthomas} described an exceptional list of nine conformally rigid graphs. The theory developed in this paper  \textit{explains} two of the nine graphs (they are walk-regular), and  \textit{certifies} an additional five (in the sense that their conformal rigidity is equivalent to the existence of a certain object that we can identify). The remaining two examples are currently `unexplained' in the sense that their conformal rigidity does not follow from a structural theorem. In these two cases, the certificates still exist but we do not understand whether the existence of the certificate is `necessary' or `accidental'. The original list of exceptional examples from \cite{steinthomas} together with their current status is as follows

    \begin{enumerate} 
        \item the Hoffman graph on 16 vertices (1-walk regular, see Corollary \ref{1walk})
        \item the crossing number graph 6B on 20 vertices (remains unexplained)
         \item the Haar graph 565 on 20 vertices ( vertex-transitive and has 2 edge orbits, see Theorem~\ref{thm:twoedge})
         \item the distance-2 graph of the Klein graph on 24 vertices (1-walk regular, see Corollary \ref{1walk})
        \item the $(7,1)-$bipartite $(0,2)-$graph on $n=48$ vertices (remains unexplained)
        \item the $(7,2)-$bipartite $(0,2)-$graph on $n=48$ vertices
             (vertex-transitive and has 2 edge orbits, see Theorem~\ref{thm:twoedge})
      \item the $(20,8)$ accordion graph on 40 vertices (vertex-transitive and has 2 edge orbits, see Theorem~\ref{thm:twoedge})
      \item the non-Cayley vertex-transitive graph (24,23) on $n=24$ vertices (vertex-transitive and has 2 edge orbits, see Theorem~\ref{thm:twoedge})
      \item the 120-Klein graph, discovered by Eric W. Weisstein (vertex-transitive and has 2 edge orbits, see Theorem~\ref{thm:twoedge}).
    \end{enumerate}

We expanded this line of reasoning and tested all graphs in the \textsc{House of Graphs} database \cite{house} that are not edge-transitive (edge-transitive graphs are always conformally rigid). This led to a list of 53 conformally rigid graphs most of which are conformally rigid for reasons explained by the results in this paper. A majority of them are conformally rigid because they are $1$-walk-regular (see Corollary \ref{1walk}), or vertex-transitive with two edge orbits and possess a simple eigenvector certificate (see Corollary \ref{thm:twoedge}). Some are Cayley graphs for which we establish a necessary and sufficient criterion in \S 5. 

The purpose of this section is to list \textbf{five} exceptional graphs that are conformally rigid for reasons not explained by any existing theory. In particular, they are not edge-transitive, not 1-walk regular and not vertex-transitive. It is a curious philosophical question whether these `sporadic' examples indicate  the existence of `missing' Theorems (as with most of the nine exceptional examples listed in \cite{steinthomas}) or whether some of these graphs are conformally rigid simply because they are.
 
  \begin{center}
     \begin{figure}[h!]
     \begin{tikzpicture}
\node at (0,0) {\includegraphics[width=0.25\textwidth]{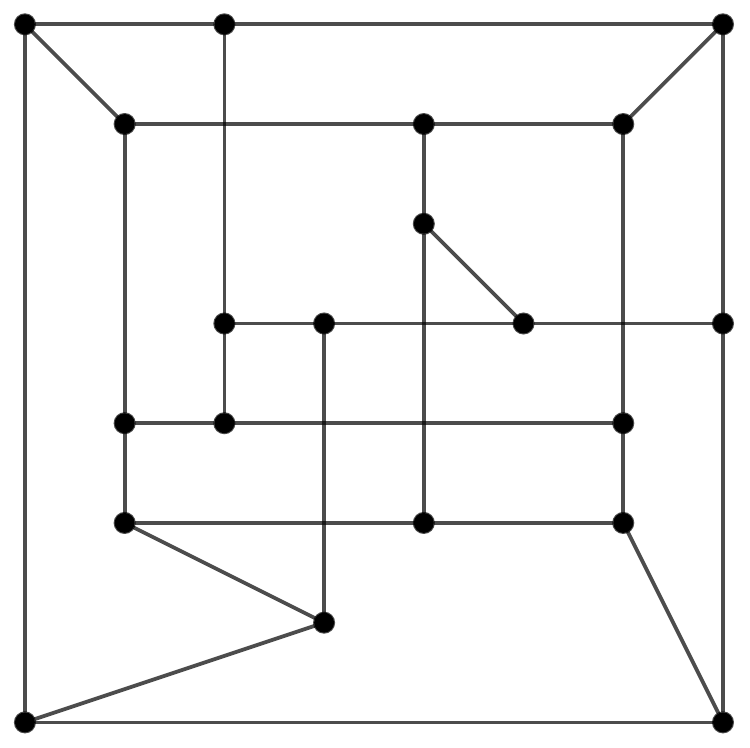}};
\node at (4,0) {\includegraphics[width=0.25\textwidth]{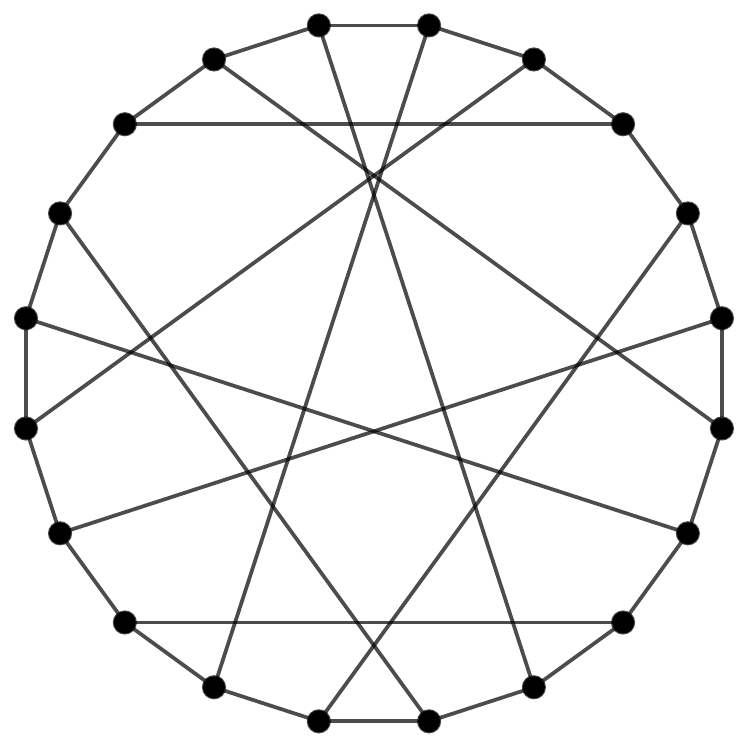}};
\node at (8,0) {\includegraphics[width=0.28\textwidth]{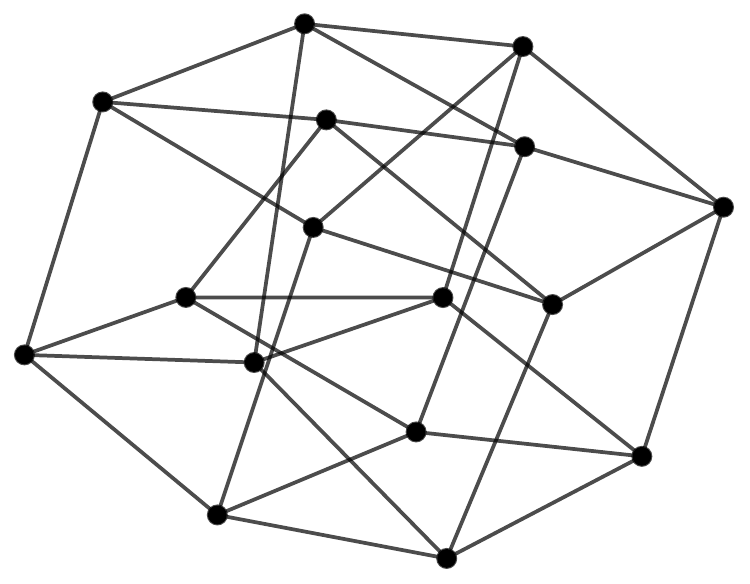}};
\node at (1.5,-4) {\includegraphics[width=0.3\textwidth]{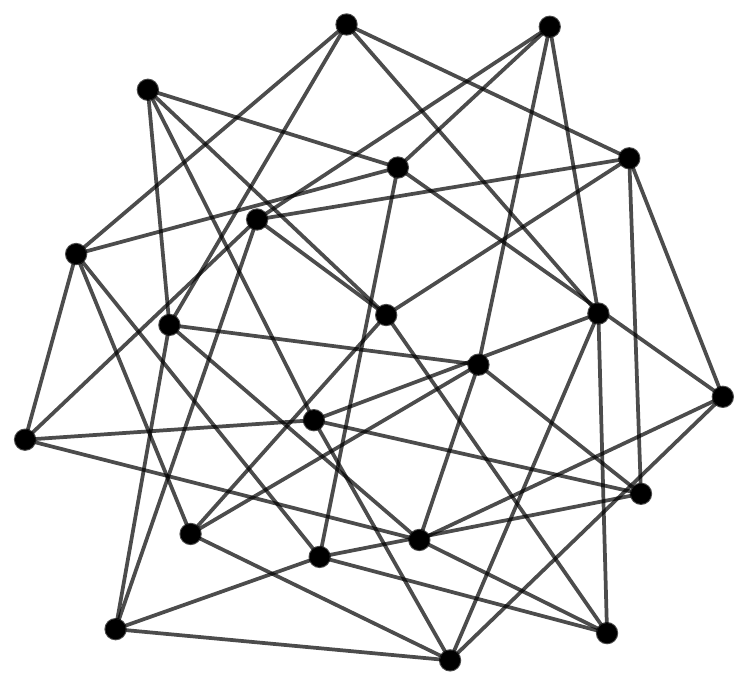}};
\node at (6.5,-4) {\includegraphics[width=0.3\textwidth]{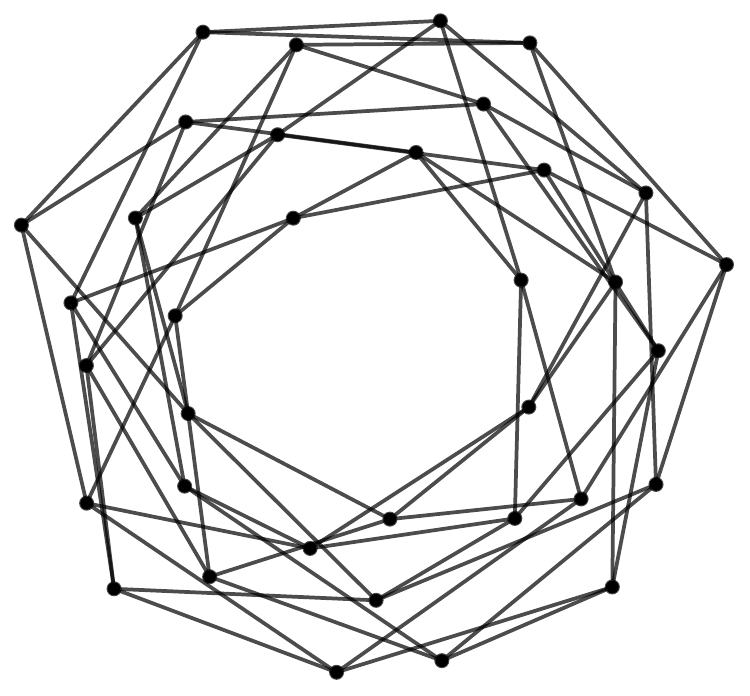}};
\end{tikzpicture}
\caption{Five exceptional graphs that are conformally rigid. Top row. Left: CrossingNumberGraph6B. Middle: the (10,3)-incidence graph 3, Right: \textsc{HoG identity 52594}. Bottom row. Left: \textsc{HoG identity 50405}, right: \textsc{HoG identity 52508}.  }\label{fig:exceptional graphs}
     \end{figure}
 \end{center}

The first example is \textsc{CrossingNumberGraph6B} in Mathematica (HoG identity 1004). This 3-regular graph on $20$ vertices and 30 edges appeared in the 2009 work of Pegg-Exoo \cite{pegg} about graphs minimizing crossing numbers. It already appeared in the list in \cite{steinthomas} and its conformal rigidity remains unexplained. The second example is the \textsc{(10,3)-incidence graph 3} in Mathematica (\textsc{HoG identity 33538}), another 3-regular graph on 20 vertices and 30 edges that is \textit{isospectral} with the first example. It is also remarkable for attaining the largest algebraic connectivity given its diameter, we refer to a recent paper by Exoo-Kolokolnikov-Janssen-Salamon \cite{exoo}. The third example is a 4-regular graph on 16 vertices appearing in House of Graphs as \textsc{HoG identity 52594} and also, rather recently, in a 2024 paper of Goedgebeur-Jooken-Lo-Seamone-Zamfirescu \cite{goed}.  The fourth example is the unitary polarity graph on the projective plane PG(2,4), it has 21 vertices and 48 edges (\textsc{HoG identity 50405}). The final example is a 4-regular graph on 35 vertices and 70 edges that appeared in recent work of Jajcay-Jooken-Porupsanszki \cite{jacjay} (\textsc{HoG identity 52508}). It has the remarkable property that each vertex is contained in exactly 30 pairwise distinct shortest cycles.

\bigskip 
\textbf{Acknowledgments.} We are grateful to Martin Winter for sharing his argument that 1-walk regular graphs are conformally rigid and to Primoz Potocnik for help with the graphs appearing in \cite{primosz}.

\section{Spectral Embeddings}
\subsection{Setup.} Conformal rigidity in a graph can be tested and certified using {\em semidefinite programming}, a branch of convex optimization. For a full development of this connection, see \cite{steinthomas}. In this paper, we will rely on an  interpretation of the semidefinite programming certificate for conformal rigidity in the language of spectral embeddings of graphs. In this section, we will first define the notion of a spectral embedding, give some examples, and then use spectral embeddings to study the behavior of conformal rigidity under Cartesian products.

\begin{definition} \label{def:embedding}
Let  $\lambda >0$  be an eigenvalue of the Laplacian 
$L$ of $G=(V,E)$, of multiplicity $m$, and let $\mathcal{E}_\lambda \cong \RR^{m}$ be the eigenspace of $\lambda$ 
and $P \in \RR^{n \times k}$ a matrix whose 
columns lie in $\mathcal{E}_\lambda$. The collection of vectors 
$\mathcal{P} = \{ 
p_i, \,\,i=1, \ldots, n\} \subset \RR^{k}$, such that $p_1^\top, \ldots p_n^\top$ are the rows of $P$,  
is a {\bf spectral embedding} of $G$ on $\mathcal{E}_\lambda$. 
\end{definition}

The configuration $\mathcal{P}$ embeds $G$ in $\RR^{k}$ by assigning the vector $p_i$ to vertex $v_i$, and connecting  $p_i$ and $p_j$ by an edge for all $(i,j) \in E$. The embedding is {\em centered} in the sense that $\sum p_i = 0$ since the columns of $P$ are 
orthogonal to ${\bf 1}$.  
Spectral embeddings were defined by Hall \cite{hall} and several variants have been studied. 

\begin{definition} \label{def:edge iso embedding}
    An embedding $\mathcal{P}$ of $G$ on $\mathcal{E}_\lambda$ is said to be 
    {\bf edge-isometric} if there exists a constant $c > 0$ such that 
    $\|p_i - p_j \|=c$ for all $(i,j) \in E$. 
\end{definition}

Since $G$ is connected, the eigenspace $\mathcal{E}_0$ is spanned by ${\bf 1}$ and hence an embedding of $G$ using any multiple of ${\bf 1}$ will have $\|p_i - p_j \|=0$ for all $(i,j) \in E$. Hence, Definition~\ref{def:edge iso embedding} requires $\lambda > 0$ and $c > 0$. 

Edge-isometric {\em spectral} embeddings are rather special, and not the same as an edge-isometric embedding of the graph in some $\RR^k \subset \RR^m$, where $m= \dim( \mathcal{E}_\lambda)$.  

\begin{example}
Any bipartite graph $G$ with vertex partition $A \cup B$ has an edge-isometric embedding in $\RR$ obtained by sending all the vertices in $A$ to $1$ and all the vertices in $B$ to $-1$. Consider a path on $4$ vertices 
\begin{center}
    \begin{tikzpicture}
    \node[draw, circle, fill=black, inner sep=2pt, label=above:$1$] (v1) at (0,0) {};
    \node[draw, circle, fill=black, inner sep=2pt, label=above:$2$] (v2) at (2,0) {};
    \node[draw, circle, fill=black, inner sep=2pt, label=above:$3$] (v3) at (4,0) {};
    \node[draw, circle, fill=black, inner sep=2pt, label=above:$4$] (v4) at (6,0) {};

    \draw (v1) -- (v2) -- (v3) -- (v4);
\end{tikzpicture}
\end{center}
which is bipartite, and 
whose Laplacian is 
$$L=\begin{bmatrix} 
1 & -1 & 0 & 0 \\ 
-1 & 2 & -1 & 0 \\
0 & -1 & 2 & -1 \\
0 & 0 & -1 & 1 \end{bmatrix}$$
with eigenvalues $0, 2-\sqrt{2}, 2, 2+\sqrt{2}$. 
An eigenvector of the largest eigenvalue $2+\sqrt{2}$ is 
$\varphi = (-1,1+\sqrt{2},-1-\sqrt{2},1)$, and hence the spectral embedding of $G$ on the last eigenspace has vertices at the coordinates of $\varphi$:
\begin{center}
\begin{tikzpicture}
    \node[draw, circle, fill=black, inner sep=2pt, label=above:3] (v1) at (-1.414-1,0) {};
    \node[draw, circle, fill=black, inner sep=2pt, label=above:1] (v2) at (-1,0) {};
    \node[draw, circle, fill=black, inner sep=2pt, label=above:4] (v3) at (1,0) {};
    \node[draw, circle, fill=black, inner sep=2pt, label=above:2] (v4) at (1+1.414,0) {};

    \draw (v1) -- (v2) -- (v3) -- (v4);
\end{tikzpicture}
\end{center}
The edges $(1,2)$ and $(3,4)$ have the same length but $(2,3)$ is longer which means that no spectral embedding in this eigenspace can be edge-isometric. \qed
\end{example}

The following modification of a result from \cite{steinthomas} will be our main tool. It is an adaptation of an observation in 
\cite{goering-helmberg-wappler} and \cite{sun-boyd-xiao-diaconis} in the context of conformal rigidity. 

\begin{proposition}\cite[Proposition 4.3]{steinthomas} \label{prop:embedding certificates}
A graph $G=([n],E)$ is lower conformally rigid if and 
only if it has an edge-isometric embedding on $\mathcal{E}_{\lambda_2}$. It is upper conformally rigid if and only if it has an edge-isometric embedding on $\mathcal{E}_{\lambda_n}$.
\end{proposition}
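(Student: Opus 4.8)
The plan is to prove both equivalences by combining the variational (Rayleigh quotient) characterization of eigenvalues with semidefinite duality; I describe the $\lambda_2$ (lower) case in detail, the $\lambda_n$ (upper) case being entirely analogous. Throughout I write $L_e = (e_i - e_j)(e_i - e_j)^\top$ for the elementary Laplacian of an edge $e = (i,j)$, so that $L(w) = \sum_{e \in E} w_e L_e$ and $\langle X, L_e\rangle = (e_i - e_j)^\top X (e_i - e_j)$ for any symmetric $X$.

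For the easy direction ($\Leftarrow$), suppose $\mathcal{P}$ is an edge-isometric embedding on $\mathcal{E}_{\lambda_2}$ with matrix $P$ whose columns $P^{(1)}, \dots, P^{(k)}$ are eigenvectors for $\lambda_2$ (hence orthogonal to $\mathbf{1}$), and with $\|p_i - p_j\| = c$ on every edge. For any admissible $w$, summing over columns gives
$$\sum_{\ell=1}^k \langle P^{(\ell)}, L(w) P^{(\ell)}\rangle = \sum_{e=(i,j) \in E} w_e \, \|p_i - p_j\|^2 = c^2 \sum_{e \in E} w_e = c^2 |E|,$$
using $\sum_{i,j} w_{ij} = 2|E|$. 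Since each $P^{(\ell)} \perp \mathbf{1}$ is a valid test vector, the Rayleigh characterization yields $\lambda_2(L(w)) \|P^{(\ell)}\|^2 \le \langle P^{(\ell)}, L(w)P^{(\ell)}\rangle$; summing over $\ell$ gives $\lambda_2(L(w)) \le c^2|E| / \sum_\ell \|P^{(\ell)}\|^2$. Specializing to $w = \mathbf{1}$, where each $P^{(\ell)}$ is an exact eigenvector, turns the inequality into an equality, so the right-hand side equals $\lambda_2(L(\mathbf{1}))$, proving $\lambda_2(L(w)) \le \lambda_2(L(\mathbf{1}))$ for all $w$.

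For the hard direction ($\Rightarrow$) I would pass to the semidefinite program underlying the optimization. With $\Pi = I - \tfrac1n \mathbf{1}\mathbf{1}^\top$, the condition $\lambda_2(L(w)) \ge t$ is equivalent to $L(w) - t\Pi \succeq 0$, so maximizing $\lambda_2$ over the weight simplex becomes
$$\max_{t, w}\ t \quad \text{s.t.}\quad L(w) - t\Pi \succeq 0,\ \ w \ge 0,\ \ \textstyle\sum_{e} w_e = |E|.$$
Lower conformal rigidity says $(w,t) = (\mathbf{1}, \lambda_2(L(\mathbf{1})))$ is optimal. I would form the dual, with a PSD multiplier $X$ for the matrix inequality, multipliers $\mu_e \ge 0$ for $w \ge 0$, and a scalar $\nu$ for the linear constraint, and invoke strong duality (Slater holds). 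Complementary slackness $X(L - \lambda_2 \Pi) = 0$ forces the column space of $X$ into $\ker(L - \lambda_2\Pi) = \mathrm{span}(\mathbf{1}) \oplus \mathcal{E}_{\lambda_2}$; stationarity in $t$ gives $\langle X, \Pi\rangle = 1$; and since every edge weight is strictly positive at $\mathbf{1}$, complementary slackness forces $\mu_e = 0$ and hence $\langle X, L_e\rangle = \nu$ for every edge. Factoring $X = QQ^\top$, this reads $\|q_i - q_j\|^2 = \nu$ on every edge, i.e.\ an edge-isometric embedding. Replacing $X$ by $\Pi X \Pi$ pushes the columns of $Q$ into $\mathcal{E}_{\lambda_2}$ without changing any $q_i - q_j$ (because $\Pi(e_i - e_j) = e_i - e_j$), producing a genuine spectral embedding on $\mathcal{E}_{\lambda_2}$; finally, summing $\langle X, L_e\rangle = \nu$ over edges and using $\langle X, L\rangle = \lambda_2\langle X,\Pi\rangle = \lambda_2$ shows $\nu |E| = \lambda_2 > 0$, so $c = \sqrt{\nu} > 0$.

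The main obstacle I anticipate is the hard direction, and within it the careful bookkeeping of the dual: verifying the constraint qualification so the dual optimum is attained, reading off the correct complementary slackness relations, and disposing of the stray $\mathrm{span}(\mathbf{1})$ component of $X$ so the embedding truly lives on $\mathcal{E}_{\lambda_2}$ rather than on $\mathrm{span}(\mathbf{1}) \oplus \mathcal{E}_{\lambda_2}$. The upper conformally rigid case is structurally identical: one minimizes $\lambda_n(L(w))$, the matrix inequality becomes $t I - L(w) \succeq 0$ (no projection is needed, since the constant vector already realizes the minimal Rayleigh quotient $0$), and the dual multiplier's column space lands directly in $\mathcal{E}_{\lambda_n}$, yielding an edge-isometric embedding there.
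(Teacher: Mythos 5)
Your proposal is correct and follows essentially the same route as the source: the paper does not reprove this statement but cites \cite[Proposition 4.3]{steinthomas}, whose argument is exactly the semidefinite-duality/complementary-slackness derivation of the embedding from the dual PSD multiplier (adapted from \cite{goering-helmberg-wappler} and \cite{sun-boyd-xiao-diaconis}), together with the Rayleigh-quotient test-vector argument for the easy direction. The one point you flag yourself --- that $L(w)-t\Pi$ is never strictly positive definite because of the $\mathbf{1}$ kernel, so Slater must be checked after restricting to $\mathbf{1}^{\perp}$ --- is a genuine but standard technicality and is handled the same way in the cited proof.
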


We introduce a few more definitions of embeddings needed in this paper.

\begin{definition}
    Let $\mathcal{P}=\{p_1, \ldots, p_n\}$ be an embedding of $G$ on $\mathcal{E}_\lambda$ where $\lambda > 0$. 
    \begin{enumerate}
        \item $\mathcal{P}$ is a {\bf spherical embedding} if all vectors $p_i$ have the same norm. 
        \item $\mathcal{P}$ is the {\bf canonical embedding} if the columns of $P$ form an orthonormal basis of $\mathcal{E}_\lambda$. 
        \item $\mathcal{P}$ is a {\bf symmetrized embedding} if the columns of $P$ arise as follows: pick a nonzero vector $\varphi \in \mathcal{E}_\lambda$ and let the columns of $P$ be the elements in the orbit of $\varphi$ under a subgroup $H$ of $\textup{Aut}(G)$, the automorphism group of $G$. 
    \end{enumerate}
\end{definition}

  If $P$ and $P'$ are two matrices whose columns form orthonormal bases of $\mathcal{E}_\lambda$, we have that $P' = PU$ for an orthogonal matrix $U$. Therefore, $\mathcal{P}' = \{ U^\top p_1, \ldots, U^\top p_n\}$ is obtained by an orthogonal transformation of $\mathcal{P}$. This preserves angles and lengths and hence we call any one of these embeddings `the' canonical embedding of $G$ on $\mathcal{E}_\lambda$. 
A symmetrized embedding makes sense since every eigenspace of $L$ is $\textup{Aut}(G)$-invariant. 
 Every spectral embedding is a linear image of the canonical embedding of $G$ on that eigenspace. Indeed, if the columns of $U$ form an orthonormal basis of $\mathcal{E}_\lambda$ and $P$ is any matrix with columns in $\mathcal{E}_\lambda$, then $P = U B$ for some matrix $B$. Denoting the rows of $U$ by $u_1^\top, \ldots, u_n^\top$, we have  
$p_i = B^\top u_i$ for all $i$.

\subsection{An application: conformal rigidity under Cartesian products}
Given two conformally rigid graphs $G$ and $H$ as well as their associated spectral embeddings, we can (under some conditions) give an explicit algebraic construction of a spectral embedding for $G \square H$, the Cartesian product of $G$ and $H$. 

The Cartesian product of $G$ and $H$ has vertex set $V(G) \times V(H)$, and two elements $(g_1, h_1), (g_2, h_2) \in V(G) \times V(H)$ form an edge in $G \square H$ if either $g_1 = g_2$ and $(h_1, h_2) \in E(H)$ or $(g_1, g_2) \in E(G)$ and $h_1 = h_2$. 
Given the Laplacian matrices $L_{G}$ and $L_{H}$ of $G$ and $H$, the Laplacian matrix of  \( G \square H \) is  
\[
L_{G \square H} = L_G \otimes I_H + I_G \otimes L_H,
\]
where $\otimes$ denotes the tensor product and $I_H, I_G$ denote the identity matrices of sizes $|V(G)|$ and $|V(H)|$. If \( \lambda_i^G \) are the eigenvalues of \( L_G \) and \( \lambda_j^H \) are the eigenvalues of \( L_H \), then the eigenvalues of \( L_{G \square H} \) are given by
\[
\lambda_{i,j}^{G \square H} = \lambda_i^G + \lambda_j^H, \quad \forall i, j.
\]
If \( v_i^G \) and \( v_j^H \) are the corresponding eigenvectors of \( L_G \) and \( L_H \),  then the eigenvectors of \( L_{G \square H} \) are given by the Kronecker products $v_{i,j}^{G \square H} = v_i^G \otimes v_j^H.$
It is now possible to construct spectral edge-isometric embeddings in several cases.

\begin{lemma}
If $G$ and $H$ have edge-isometric embeddings on  their $\mathcal{E}_{\lambda_2}$'s and have the same positive algebraic connectivity, i.e., $\lambda_2(G)=\lambda_2(H)$, then $G \square H$ has an edge-isometric embedding on its $\mathcal{E}_{\lambda_2}$.
\end{lemma}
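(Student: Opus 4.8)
The plan is to build the required edge-isometric embedding of $G \square H$ directly out of the two given embeddings, by placing each vertex $(g,h)$ at the concatenation of the images of $g$ and $h$.

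First I would pin down the relevant eigenspace. Write $\lambda := \lambda_2(G) = \lambda_2(H)$ and recall that $\lambda_1^G = \lambda_1^H = 0$ with eigenvector $\mathbf{1}$. From $\lambda_{i,j}^{G\square H} = \lambda_i^G + \lambda_j^H$ it follows that the smallest positive eigenvalue of $L_{G\square H}$ is $\lambda$: it is attained by the index pairs $(1,2)$ and $(2,1)$, while any pair $(i,j)$ with $i,j \ge 2$ gives $\lambda_i^G + \lambda_j^H \ge 2\lambda > \lambda$, and a pair with one index equal to $1$ gives either $0$ or a value $\ge \lambda$. Hence $\lambda_2(G\square H) = \lambda$ and, reading off the corresponding Kronecker eigenvectors, $\mathcal{E}_\lambda(G\square H) = \big(\mathcal{E}_{\lambda_2}(G) \otimes \mathbf{1}_H\big) \oplus \big(\mathbf{1}_G \otimes \mathcal{E}_{\lambda_2}(H)\big)$. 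This clean two-block splitting is precisely where the hypothesis $\lambda_2(G)=\lambda_2(H)$ is used.

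Next I would construct the embedding. Let $P_G$ and $P_H$ be matrices realizing the given edge-isometric embeddings of $G$ and $H$ on $\mathcal{E}_{\lambda_2}(G)$ and $\mathcal{E}_{\lambda_2}(H)$, with rows $p_g^\top$ and $q_h^\top$ and edge constants $c_G$ and $c_H$. Since scaling a spectral embedding by a positive constant keeps its columns in the eigenspace and rescales all edge lengths by the same factor, I would first rescale so that $c_G = c_H =: c$. Then set
$$P := \big[\, P_G \otimes \mathbf{1}_H \ \big|\ \mathbf{1}_G \otimes P_H \,\big],$$
whose row indexed by $(g,h)$ equals $(p_g^\top, q_h^\top)$; equivalently, the embedding sends $(g,h) \mapsto (p_g, q_h) \in \RR^{k_G + k_H}$. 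Using $L_{G\square H} = L_G \otimes I_H + I_G \otimes L_H$ together with $L_G \mathbf{1}_G = 0$ and $L_H \mathbf{1}_H = 0$, a one-line computation shows that each column $c \otimes \mathbf{1}_H$ and $\mathbf{1}_G \otimes d$ of $P$ is an eigenvector of $L_{G\square H}$ for $\lambda$, so $P$ is a genuine spectral embedding on $\mathcal{E}_\lambda(G\square H)$.

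Finally I would verify edge-isometry by cases. An edge of $G\square H$ is either of the form $\{(g,h_1),(g,h_2)\}$ with $(h_1,h_2) \in E(H)$, in which case the difference of images is $(0,\, q_{h_1}-q_{h_2})$ of length $c_H = c$, or of the form $\{(g_1,h),(g_2,h)\}$ with $(g_1,g_2) \in E(G)$, giving difference $(p_{g_1}-p_{g_2},\, 0)$ of length $c_G = c$. Thus every edge has the common length $c$, and $P$ is edge-isometric. The only genuine subtlety, and the step I expect to require the most care, is the bookkeeping that forces the eigenspace to be exactly the two-block direct sum above: this both justifies the construction and pinpoints why equal algebraic connectivities are needed, since if $\lambda_2(G) < \lambda_2(H)$ the eigenspace would collapse to $\mathcal{E}_{\lambda_2}(G) \otimes \mathbf{1}_H$ and the $H$-edges would be contracted to length $0$.
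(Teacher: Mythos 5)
Your proposal is correct and follows essentially the same route as the paper: form the matrix $\big[\,P_G\otimes\mathbf{1}_H\mid \mathbf{1}_G\otimes P_H\,\big]$, note its columns are $\lambda_2$-eigenvectors of $L_{G\square H}$, and read off edge-isometry from the concatenated rows. You are in fact a bit more careful than the paper in two spots it glosses over --- verifying that $\lambda$ really is the second eigenvalue of the product and rescaling so that $c_G=c_H$ --- though watch the notational clash where $c$ denotes both a column of $P$ and the common edge length.
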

\begin{proof}
Let the rows of $U_G$ and $U_H$ be the edge-isometric spectral embeddings of $G$ and $H$, which implies that their columns are eigenvectors associated to $\lambda_2$ in $L_G$ and $L_H$ respectively. Since the eigenvector associated to $\lambda_1=0$ is always the all-ones vector, we have that $U_G \otimes {\bf 1}_{|H|}$ and ${\bf 1}_{|G|} \otimes U_H$ have columns that are eigenvectors of $\lambda_2$ in $G \square H$. Consider the matrix $U=[U_G \otimes {\bf 1}_{|H|}, {\bf 1}_{|G|} \otimes U_H]$. The row $(i,j)$ of $U$ is simply the concatenation of the $i$-th row of $U_G$ and the $j$-th row of $U_H$. So the embedding of $G \square H$ given by the rows of $U$ is just the cartesian product of the original embeddings of each graph, and is therefore still edge-isometric.
\end{proof}

If we normalize a spherical edge-isometric embedding of $G$ on $\mathcal{E}_{\lambda_{\max}}$ so that 
all edge lengths are $1$, then the radius  of the embedding is $\sqrt{\delta_G/(2 \lambda_{\max}(G))}$ where $\delta_G$ is the average degree of $G$ \cite[Remark 4.4]{steinthomas}.

\begin{lemma}
If $G$ and $H$ have spherical edge-isometric embeddings on $\mathcal{E}_{\lambda_{\max}}$ and 
$$\frac{\delta_G}{2 \lambda_{\max}(G)}=\frac{\delta_H}{2 \lambda_{\max}(H)},$$ 
then $G \square H$ has a {spherical}  edge-isometric embedding on its $\mathcal{E}_{\lambda_{\max}}$.
\end{lemma}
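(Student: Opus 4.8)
The plan is to mirror the structure of the previous lemma, but now I must track two things simultaneously: edge-isometry \emph{and} the common radius. First I would set up the spectral data. Let $U_G$ and $U_H$ have columns forming edge-isometric spherical embeddings of $G$ and $H$ on $\mathcal{E}_{\lambda_{\max}(G)}$ and $\mathcal{E}_{\lambda_{\max}(H)}$ respectively, normalized so that all edge lengths equal $1$. The key arithmetic fact to exploit is the eigenvalue formula $\lambda_{i,j}^{G\square H}=\lambda_i^G+\lambda_j^H$: the top eigenvalue of $G\square H$ is $\lambda_{\max}(G\square H)=\lambda_{\max}(G)+\lambda_{\max}(H)$, and it is attained precisely by the Kronecker products $v_i^G\otimes v_j^H$ with $\lambda_i^G=\lambda_{\max}(G)$ and $\lambda_j^H=\lambda_{\max}(H)$ simultaneously. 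This is the crucial point of departure from the $\lambda_2$ case: for the \emph{largest} eigenvalue we must pair the top eigenspace of $G$ with the top eigenspace of $H$, rather than pairing an eigenspace with the constant vector $\mathbf{1}$.

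The natural candidate for the embedding matrix is therefore $U=U_G\otimes U_H$, whose rows are indexed by pairs $(i,j)$ with row $(i,j)$ equal to $p_i^G\otimes p_j^H$, the Kronecker product of the $i$-th row of $U_G$ and the $j$-th row of $U_H$. I would first verify that the columns of $U$ indeed lie in $\mathcal{E}_{\lambda_{\max}(G\square H)}$, which follows immediately from the eigenvector description above. Next I would compute two quantities. For the spherical property, $\|p_i^G\otimes p_j^H\|=\|p_i^G\|\,\|p_j^H\|$, and since each factor embedding is spherical with radius $r_G$ and $r_H$ respectively, every vertex of $G\square H$ sits at norm $r_Gr_H$; so the product embedding is automatically spherical. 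The hypothesis on the radii is what makes this clean: by the quoted formula $r_G=\sqrt{\delta_G/(2\lambda_{\max}(G))}$ and $r_H=\sqrt{\delta_H/(2\lambda_{\max}(H))}$, and the assumption $\delta_G/(2\lambda_{\max}(G))=\delta_H/(2\lambda_{\max}(H))$ forces $r_G=r_H$.

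For edge-isometry I would use the defining edge structure of the Cartesian product. An edge of $G\square H$ is of one of two types. If it comes from an $H$-edge $(h_1,h_2)$ at a fixed $g$, the corresponding embedded difference is $p_g^G\otimes(p_{h_1}^H-p_{h_2}^H)$, whose norm is $\|p_g^G\|\cdot\|p_{h_1}^H-p_{h_2}^H\|=r_G\cdot 1=r_G$, using sphericity of the $G$-embedding and edge-isometry of the $H$-embedding. Symmetrically, an edge coming from a $G$-edge at fixed $h$ has embedded length $\|p_{g_1}^G-p_{g_2}^G\|\cdot\|p_h^H\|=1\cdot r_H=r_H$. Thus all edges of both types have a common length exactly when $r_G=r_H$, which is guaranteed by the radius hypothesis. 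Collecting these, the product embedding is edge-isometric with constant $c=r_G=r_H$ and spherical with radius $r_Gr_H$, completing the argument.

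The one step I expect to require the most care, rather than the most difficulty, is confirming that the top eigenspace of $G\square H$ is \emph{exactly} the tensor product $\mathcal{E}_{\lambda_{\max}(G)}\otimes\mathcal{E}_{\lambda_{\max}(H)}$ and not strictly larger; this needs the observation that $\lambda_i^G+\lambda_j^H$ is maximized only when both summands are individually maximal, so that $U_G\otimes U_H$ has all its columns genuinely in $\mathcal{E}_{\lambda_{\max}(G\square H)}$. Everything else reduces to the multiplicativity of norms under Kronecker products together with the two defining edge types, and the role of the radius hypothesis is simply to equate the two edge-length constants $r_G$ and $r_H$ that otherwise arise separately.
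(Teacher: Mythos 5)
Your proposal is correct and follows essentially the same route as the paper: normalize both factor embeddings to edge length $1$ so that the radius hypothesis forces $r_G=r_H$, take $U=U_G\otimes U_H$ (whose columns lie in the top eigenspace since $\lambda_{\max}(G\square H)=\lambda_{\max}(G)+\lambda_{\max}(H)$), and use multiplicativity of norms under Kronecker products on the two edge types. The only superfluous worry is whether the top eigenspace of $G\square H$ is \emph{exactly} the tensor product of the top eigenspaces; the definition of a spectral embedding only requires the columns of $U$ to lie in $\mathcal{E}_{\lambda_{\max}(G\square H)}$, which your containment argument already gives.
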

\begin{proof}
Note that $\lambda_{\max}(G\square H) = \lambda_{\max}(G) + \lambda_{\max}(H)$.
Let the rows of $U_G$ and $U_H$ be spherical edge-isometric embeddings of $G$ and $H$ on their respective $\mathcal{E}_{\lambda_{\max}}$. We can assume that the edge lengths are $1$ in both embeddings and so both embeddings have the same radius $r$ from the {hypothesis of the lemma}.
The columns of $U=U_G \otimes U_H$ are eigenvectors of $\lambda_{\max}$ in $G \square H$. Moreover, the row $(i,j)$ of $U$ is the tensor product of the $i$-th row of $U_G$ and the $j$-th row of $U_H$, which implies $$\|u_j \otimes v_i - u_j \otimes v_{i'}\|=\|u_j\| \|v_i-v_{i'}\|=r $$
 for any edge $(j,i) \sim (j,i')$ and similarly for edges $(j,i) \sim (j',i)$. This gives us an edge-isometric spherical embedding of $G \square H$ as claimed.
\end{proof}

We combine the above results to get the following theorem.

\begin{theorem} \label{thm:cartesian products}
If $G$ and $H$ are conformally rigid, $\lambda_2(G)=\lambda_2(H)$, both have a spherical embedding on  $\mathcal{E}_{\lambda_{\max}}$,   and 
$$\frac{\delta_G}{2 \lambda_{\max}(G)}=\frac{\delta_H}{2 \lambda_{\max}(H)},$$ 
then $G \square H$ is conformally rigid.
\end{theorem}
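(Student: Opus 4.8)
The plan is to combine the two preceding lemmas with the embedding characterization of conformal rigidity from Proposition~\ref{prop:embedding certificates}. Recall that conformal rigidity means both lower and upper conformal rigidity, and by that proposition these are equivalent to the existence of edge-isometric embeddings on $\mathcal{E}_{\lambda_2}$ and on $\mathcal{E}_{\lambda_n} = \mathcal{E}_{\lambda_{\max}}$ respectively. So to prove $G \square H$ is conformally rigid, I would show that $G \square H$ possesses edge-isometric embeddings on \emph{both} of these eigenspaces, and then invoke the proposition in the reverse direction.

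For the lower conformal rigidity, the first lemma does exactly what is needed: since $G$ and $H$ are conformally rigid they are in particular lower conformally rigid, hence (again by Proposition~\ref{prop:embedding certificates}) they have edge-isometric embeddings on their respective $\mathcal{E}_{\lambda_2}$'s; together with the hypothesis $\lambda_2(G) = \lambda_2(H)$, the first lemma produces an edge-isometric embedding of $G \square H$ on its $\mathcal{E}_{\lambda_2}$. For the upper conformal rigidity, I would feed the hypotheses into the second lemma: conformal rigidity gives upper conformal rigidity, hence edge-isometric embeddings on the $\mathcal{E}_{\lambda_{\max}}$'s; the extra assumption that both have a \emph{spherical} embedding on $\mathcal{E}_{\lambda_{\max}}$ upgrades these to spherical edge-isometric embeddings, and the balance condition $\delta_G/(2\lambda_{\max}(G)) = \delta_H/(2\lambda_{\max}(H))$ is precisely the radius-matching hypothesis of the second lemma, which then yields a (spherical) edge-isometric embedding of $G \square H$ on its $\mathcal{E}_{\lambda_{\max}}$.

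The one point requiring care, which I regard as the main (if modest) obstacle, is verifying that the eigenspaces produced in the two lemmas really are the \emph{extremal} eigenspaces $\mathcal{E}_{\lambda_2}$ and $\mathcal{E}_{\lambda_{\max}}$ of $G \square H$, and not merely eigenspaces for $\lambda_2(G)+0$ or $\lambda_{\max}(G)+\lambda_{\max}(H)$ that happen to sit in the interior of the spectrum. Using $\lambda_{i,j}^{G \square H} = \lambda_i^G + \lambda_j^H$, one checks that the smallest nonzero eigenvalue of $L_{G \square H}$ is $\min\{\lambda_2(G), \lambda_2(H)\}$, so the hypothesis $\lambda_2(G) = \lambda_2(H)$ guarantees the embedding of the first lemma lands on the true $\mathcal{E}_{\lambda_2}$; and the largest eigenvalue is exactly $\lambda_{\max}(G) + \lambda_{\max}(H)$ (as already noted in the proof of the second lemma), so the embedding there automatically sits on the true $\mathcal{E}_{\lambda_{\max}}$. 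The construction of the first lemma only uses a subspace of $\mathcal{E}_{\lambda_2}(G\square H)$ (arising as $\lambda_2(G)+0$ and $0+\lambda_2(H)$), but an edge-isometric embedding on any spectral embedding within $\mathcal{E}_{\lambda_2}$ suffices for Proposition~\ref{prop:embedding certificates}, so this causes no difficulty.

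Once these spectral locations are confirmed, the proof is essentially an assembly: invoke Proposition~\ref{prop:embedding certificates} to translate conformal rigidity of $G$ and $H$ into edge-isometric (and spherical, where needed) embeddings, apply the two lemmas to transport these to edge-isometric embeddings of $G \square H$ on $\mathcal{E}_{\lambda_2}$ and $\mathcal{E}_{\lambda_{\max}}$, and apply Proposition~\ref{prop:embedding certificates} once more in the converse direction to conclude that $G \square H$ is both lower and upper conformally rigid, hence conformally rigid.
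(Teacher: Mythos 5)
Your proposal is correct and follows exactly the route the paper intends: the paper gives no explicit proof beyond ``we combine the above results,'' meaning precisely the assembly of the two lemmas via Proposition~\ref{prop:embedding certificates} that you describe. Your additional verification that $\min\{\lambda_2(G),\lambda_2(H)\}$ and $\lambda_{\max}(G)+\lambda_{\max}(H)$ are genuinely the extremal nonzero eigenvalues of $L_{G\square H}$ is a worthwhile detail the paper leaves implicit.
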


The condition of having a spherical edge-isometric embedding on $\mathcal{E}_{\lambda_{\max}}$ is automatically true for $1$-walk regular graphs (Theorem~\ref{thm:1-walk regular}), conformally rigid vertex-transitive graphs (Lemma~\ref{lem:vertex-transitive}), and regular bipartite graphs. In fact, almost all of our conformally rigid examples have this property and the condition does not seem to be too restrictive in the class of conformally rigid graphs. 
Since the Cartesian product preserves both $\lambda_2$ and $\delta/\lambda_{\max}$, one can use Theorem~\ref{thm:cartesian products} to keep taking products indefinitely to generate infinite families of conformally rigid graphs.

\section{$1$-Walk Regular Graphs}
In this section, we showcase an application of the canonical embedding  of a graph. 
In a private communication, Martin Winter shared a proof that 
$1$-walk regular graphs are conformally rigid. His argument relies on 
 Proposition~\ref{prop:embedding certificates} and canonical embeddings. We strenghten his argument  to show that 
 a graph is $1$-walk regular if and only if its canonical embedding on {\em every} eigenspace is 
 both spherical and edge-isometric (Theorem~\ref{thm:1-walk regular}).   
 
Suppose $G$ is a regular graph (of degree $d$) with adjacency matrix $A$. The spectral information of $A$ and $L$ are parallel; $\lambda$ is an eigenvalue of $A$ if and only if $d-\lambda$ is an eigenvalue of $L$, and both eigenvalues have the same eigenspace.

\begin{definition}
\begin{enumerate}
    \item A regular graph $G$ is {\bf $0$-walk regular} if for any vertex $v$ and any positive integer $\ell$, the number of length $\ell$ walks from $v$ to $v$ in $G$ does not depend on $v$. 

    \item $G$ is {\bf $k$-walk regular} if for any pair of vertices $v,v'$ such that $\textup{dist}(v,v') \leq k$, and any positive integer $\ell$, the number of length $\ell$ walks from $v$ to $v'$ depend only on $\textup{dist}(v,v')$ and $\ell$ and not on the choice of the vertex pair. 

    \item In particular, $G$ is {\bf $1$-walk regular} if it is $0$-walk regular and for every pair of adjacent vertices $v,v'$ and any positive integer $\ell$, the number of length $\ell$ walks from $v$ to $v'$ depend only on $\ell$ and not on the choice of edge. 
\end{enumerate}
\end{definition}

Note that $G$ is $0$-walk regular if and only if $A^s$, the $s$th power of the adjacency matrix $A$, has constant diagonal for all $s$, and $G$ is $1$-walk regular if and only if $A^s$ has a constant diagonal for all $s$ and all entries of $A^s$ in positions $(i,j)$ where $(i,j) \in E$ are the same. Distance-regular graphs are $k$-walk regular for $k$ equal to the graph's diameter. In particular, all distance-regular graphs are $1$-walk regular.

\begin{theorem} \label{thm:1-walk regular}
    A regular graph $G$ is $1$-walk regular if and only if its canonical embedding on {\em every} eigenspace is both spherical and edge-isometric. 
\end{theorem}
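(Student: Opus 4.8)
The plan is to translate both sides of the equivalence into statements about the entries of the spectral projections (primitive idempotents) of $A$, and then to connect the two descriptions through the adjacency algebra. Throughout, write the distinct eigenvalues of $A$ as $\theta_0 > \theta_1 > \cdots > \theta_t$, with $E_0, \ldots, E_t$ the orthogonal projections onto the corresponding eigenspaces; since $G$ is regular, $A$ and $L$ share these eigenspaces.

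First I would identify the Gram matrix of a canonical embedding with a projection. If $P$ has orthonormal columns spanning an eigenspace $\mathcal{E}_\lambda$, then $PP^\top$ is exactly the orthogonal projection $E$ onto $\mathcal{E}_\lambda$, so $\langle p_i, p_j\rangle = (PP^\top)_{ij} = E_{ij}$, giving $\|p_i\|^2 = E_{ii}$ and $\|p_i - p_j\|^2 = E_{ii} + E_{jj} - 2E_{ij}$. This immediately reformulates the geometric conditions: the canonical embedding on $\mathcal{E}_\lambda$ is spherical if and only if $E$ has constant diagonal, and, granting sphericity, it is edge-isometric if and only if $E_{ij}$ takes a single value over all edges $(i,j)\in E$. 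Hence ``spherical and edge-isometric on every eigenspace'' is equivalent to the statement that every primitive idempotent $E_r$ has constant diagonal and constant value on edge positions.

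Next I would recall, as noted in the text, that $G$ is $1$-walk regular if and only if for every $\ell$ the matrix $A^\ell$ has constant diagonal (the $0$-walk regular condition) and constant value in all edge positions. The heart of the matter is then the duality between the power basis $\{I, A, \ldots, A^t\}$ and the idempotent basis $\{E_0, \ldots, E_t\}$ of the adjacency algebra. From $A^\ell = \sum_{r} \theta_r^{\ell} E_r$ and the invertibility of the Vandermonde matrix $\bigl(\theta_r^{\ell}\bigr)_{0 \le \ell, r \le t}$ (the $\theta_r$ being distinct), each $E_r$ is a fixed linear combination of $A^0, \ldots, A^t$, and conversely. Because ``constant diagonal'' and ``constant on edge positions'' are linear conditions on matrices, both transfer across these linear combinations in either direction. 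Thus $A^\ell$ has constant diagonal (respectively constant edge entries) for all $\ell$ if and only if every $E_r$ does, which closes the equivalence.

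The step I expect to require the most care is this passage ``for all $\ell$'' $\leftrightarrow$ ``for all $r$'': one must observe that the infinitely many conditions on the $A^\ell$ collapse to the finitely many spanning powers $A^0, \ldots, A^t$, and that the Vandermonde change of basis is genuinely invertible, so that constancy is preserved in \emph{both} directions rather than only one. I would also dispatch the trivial eigenspace $\mathcal{E}_0 = \langle \mathbf{1}\rangle$ separately, where $E_0 = \tfrac1n J$ (with $J$ the all-ones matrix) automatically has constant diagonal and constant entries, so it can be included or excluded without affecting the argument; the substantive content lives entirely in the eigenspaces with $\lambda > 0$.
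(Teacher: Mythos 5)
Your proposal is correct and follows essentially the same route as the paper: both identify the Gram matrix of the canonical embedding with the spectral idempotent $PP^\top$, and both transfer the linear conditions ``constant diagonal'' and ``constant on edge positions'' back and forth between the power basis $\{I, A, \ldots, A^{t}\}$ and the idempotent basis of the adjacency algebra. Your explicit appeal to the invertible Vandermonde change of basis is simply a concrete form of the paper's statement that each basis is uniquely expressible in terms of the other.
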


\begin{proof}
Let $\mathcal{A} = \textup{span}\{I, A, A^2, A^3, \ldots\}$ be the {\em adjacency algebra} of $G$ \cite[Chapter 2]{biggs}, and let $\mathcal{E}_1, \ldots, \mathcal{E}_m$ be the distinct eigenspaces of $A$. For each $\mathcal{E}_k$, let 
    $\mathcal{U}_k = \{ u_{k1}, u_{k2}, \ldots, u_{kn}\}$ be the canonical embedding of $G$ on $\mathcal{E}_k$. It is known that the orthogonal projectors $U_1U_1^\top, \ldots, U_mU_m^\top$ form a basis of $\mathcal{A}$, as do $I, A, A^2, \ldots, A^{m-1}$. This implies that the elements of one basis can be written uniquely in terms of the other basis:
    \begin{align}
         U_sU_s^\top = \sum_{k=0}^{m-1} \alpha_{sk} A^k,\label{eq:U in terms of A}\\
         A^s = \sum_{k=1}^m \beta_{sk} U_k U_k^\top. \label{eq:A in terms of U}
    \end{align}
    From \eqref{eq:U in terms of A}, we get that 
    \begin{align} 
    \langle u_{si}, u_{sj} \rangle = (U_sU_s^\top)_{ij} = \sum_{k=0}^{m-1} \alpha_{sk} (A^k)_{ij} = \sum_{k=0}^{m-1} \alpha_{sk} w_k(i,j)
    \end{align}
    where $w_k(i,j)$ is the number of length $k$ walks from vertex $i$ to vertex $j$. 
    
    If $G$ is $1$-walk regular, $w_k(i,i)$ and $w_k(i,j)$ are independent of $i \in V$ and $(i,j) \in E$, respectively. Therefore, 
    $\langle u_{si}, u_{si} \rangle$ is independent of $i$ and 
    $\langle u_{si}, u_{sj} \rangle$ is independent of $(i,j) \in E$, which means that $\|u_{si} - u_{sj}\|$ is also independent of $(i,j) \in E$.
    We conclude that the canonical embedding $\mathcal{U}_k$ is both spherical and edge-isometric.
    
    Conversely, suppose the canonical embedding $\mathcal{U}_k$ is spherical and edge-isometric, i.e., $\|u_{ki}\|$ is the same positive number $\mu$ for all $i=1, \ldots, n$ and $\| u_i - u_j \|$ is the same positive number $\nu$ for all $(i,j) \in E$.
    Then,
    \begin{align}
        (A^s)_{ii} =  \sum_{k=1}^m \beta_{sk} (U_k U_k^\top)_{ii} = \sum_{k=1}^m \beta_{sk} \|u_{ki}\|^2 = \mu^2 \sum_{k=1}^m \beta_{sk}
   \end{align}
   which is independent of $i$ and so, the matrix $A^s$ has a constant diagonal. Since  $\mathcal{U}_k$ is edge-isometric, for all $(i,j) \in E$,
   \begin{align}
         \nu^2 = \|u_{ki} - u_{kj}\|^2 = \|u_{ki}\|^2 + \|u_{kj}\|^2 - 2 u_{ki}^\top u_{kj} = 2\mu^2 - 2u_{ki}^\top u_{kj}.
   \end{align}
   This implies that for all $(i,j) \in E$, all $u_{ki}^\top u_{kj}$ are equal to the same constant $\gamma$, and 
   \begin{align}
        (A^s)_{ij} =  \sum_{k=1}^m \beta_{sk} (U_k U_k^\top)_{ij} = \sum_{k=1}^m \beta_{sk} u_{ki}^\top u_{kj} = \gamma \sum_{k=1}^m \beta_{sk}
    \end{align}
    is independent of $(i,j)$. In other words, all entries of $A^s$ in positions $(i,j)$ for $(i,j) \in E$ are the same. Thus we have that $G$ is $1$-walk regular.
\end{proof}

\begin{center}
    \begin{figure}[h!]
     \begin{tikzpicture}
         \node at (0,0) {\includegraphics[width=0.35\textwidth]{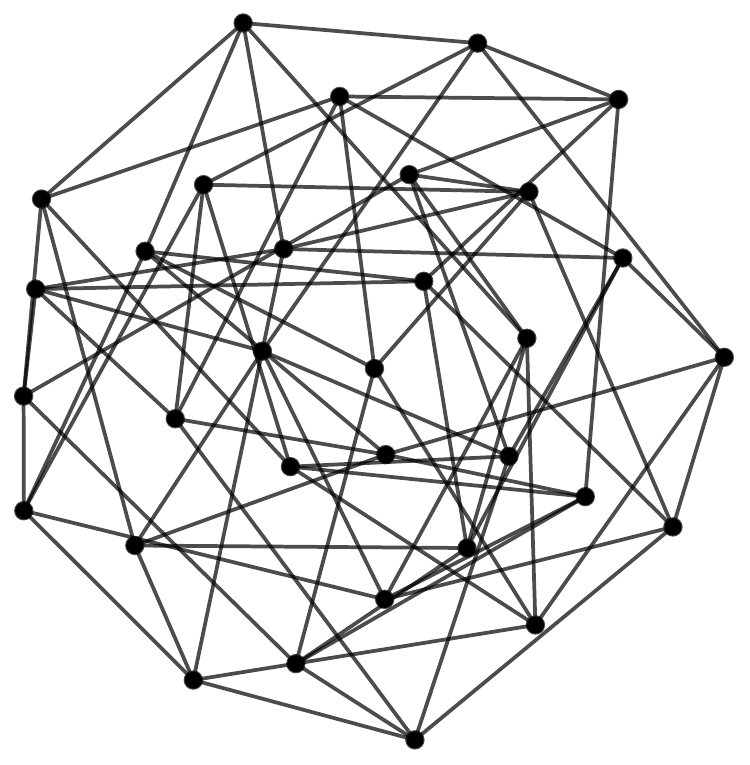}};
         \node at (6,0) {\includegraphics[width=0.35\textwidth]{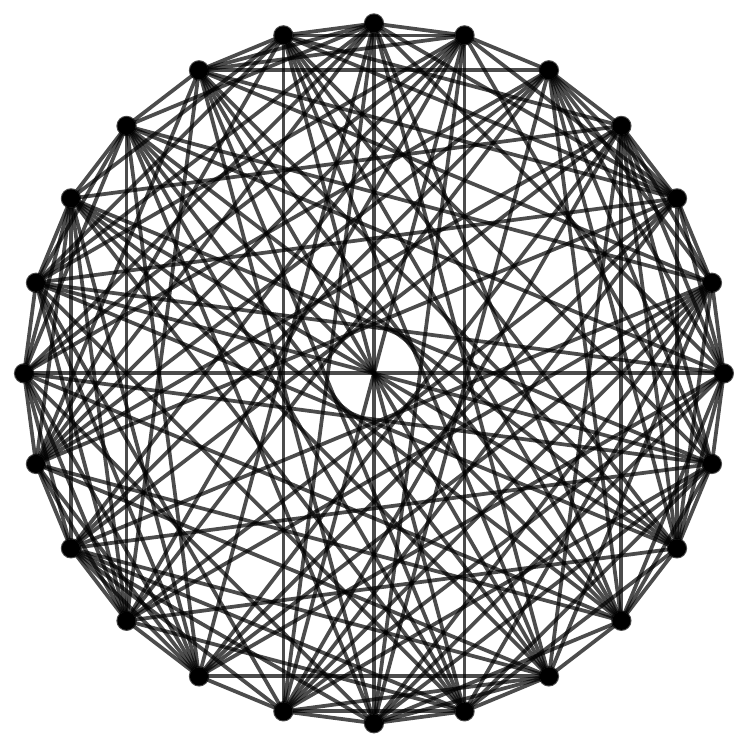}};
     \end{tikzpicture}
     \caption{The `Brussels graph' (\textsc{HoG 50460}, \cite{araujo}) and the \textsc{Klein Distance 2 graph} are 1-walk regular, thus conformally rigid.}
    \end{figure}
\end{center}

As a corollary we get the following result from Winter which subsumes our previous result in \cite{steinthomas} that distance-regular graphs are conformally rigid. 

\begin{corollary}[Martin Winter] \label{1walk}
All $1$-walk regular graphs are conformally rigid.
\end{corollary}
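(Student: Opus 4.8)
The plan is to derive Corollary~\ref{1walk} as an immediate consequence of Theorem~\ref{thm:1-walk regular} together with the embedding characterization of conformal rigidity in Proposition~\ref{prop:embedding certificates}. The key observation is that Theorem~\ref{thm:1-walk regular} gives a statement about \emph{every} eigenspace, whereas conformal rigidity only requires edge-isometric embeddings on the two extremal eigenspaces $\mathcal{E}_{\lambda_2}$ and $\mathcal{E}_{\lambda_n}$. So the corollary follows by specializing the theorem to these particular eigenspaces.

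Concretely, I would argue as follows. Let $G$ be a $1$-walk regular graph. Since $1$-walk regularity presupposes that $G$ is regular, the eigenspaces of the Laplacian $L$ coincide with those of the adjacency matrix $A$, with the eigenvalue correspondence $\lambda \leftrightarrow d - \lambda$ noted just before the definition; in particular $\mathcal{E}_{\lambda_2}$ and $\mathcal{E}_{\lambda_n}$ are genuine eigenspaces to which the theorem applies. By Theorem~\ref{thm:1-walk regular}, the canonical embedding of $G$ on every eigenspace is both spherical and edge-isometric. Applying this to $\mathcal{E}_{\lambda_2}$ produces an edge-isometric embedding on $\mathcal{E}_{\lambda_2}$, which by the first half of Proposition~\ref{prop:embedding certificates} certifies that $G$ is lower conformally rigid. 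Applying it instead to $\mathcal{E}_{\lambda_n}$ produces an edge-isometric embedding on $\mathcal{E}_{\lambda_n}$, which by the second half of the same proposition certifies that $G$ is upper conformally rigid. A graph that is both lower and upper conformally rigid is conformally rigid by definition, so the claim follows.

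I do not expect any genuine obstacle here, since all the substantive work is contained in Theorem~\ref{thm:1-walk regular}; the corollary is a packaging step. The only minor point to keep straight is that the edge-isometry produced by the theorem uses the norm constant $c = \nu > 0$ guaranteed by the spherical-plus-edge-isometric conclusion, and that $\lambda_2 > 0$ (because $G$ is connected) and $\lambda_n > 0$, so that both eigenspaces are legitimate targets for Definition~\ref{def:edge iso embedding}, which explicitly excludes $\lambda = 0$. These are automatic for a connected graph, so the proof is essentially a one-line deduction. Finally, I would remark that this subsumes the earlier result on distance-regular graphs, since distance-regular graphs are $1$-walk regular as observed in the text.
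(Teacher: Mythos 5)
Your proposal is correct and is exactly the paper's (implicit) argument: the corollary is stated as an immediate consequence of Theorem~\ref{thm:1-walk regular} applied to $\mathcal{E}_{\lambda_2}$ and $\mathcal{E}_{\lambda_n}$, combined with Proposition~\ref{prop:embedding certificates}. The paper gives no further proof, so your packaging of the deduction, including the checks that $\lambda_2,\lambda_n>0$, matches the intended reasoning.
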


\begin{example}
    Consider the $4$-regular graph 52508 from the House of Graphs database (see Figure~\ref{fig:exceptional graphs}) which has $35$ vertices and $70$ edges. This graph is  not $1$-walk regular but is conformally rigid. In fact, canonical embeddings certify conformal rigidity. 
    Since the graph is not $1$-walk regular, it must have an eigenspace in which the canonical embedding is not edge-isometric. Indeed, the third largest eigenvalue of $L$ is $4-\sqrt{5}$ and it has a one-dimensional eigenspace spanned by 
    \begin{align} 
    \begin{split}
    \varphi = & ( 0,-1,1,2-\sqrt{5},\sqrt{5}-2,0,0,\sqrt{5}-2,-1,1,2-\sqrt{5},0,0,2-\sqrt{5},\sqrt{5}-2,\\ & 1,-1,\sqrt{5}-2,1,2-\sqrt{5},-1,0,0,1,-1,2-\sqrt{5},\sqrt{5}-2,\sqrt{5}-2,2-\sqrt{5},\\&-1,1,-1,2-\sqrt{5},\sqrt{5}-2,1)
    \end{split}
    \end{align}
    In this embedding, edge $(1,2)$ has length $1$ while $(1,5)$ has length $\sqrt{5}-2$. This embedding is also not spherical.
\end{example}

\section{Symmetrized Embeddings}
Suppose $\textup{Aut}(G)$ denotes the automorphism group of $G$. 
A subgroup $\Psi$ of $\textup{Aut}(G)$ can be identified with a collection of permutations $\sigma$ on $[n]$ and hence, the action of $\sigma \in \Psi$ on $x \in \RR^n$ permutes the coordinates of $x$. In particular, $(\sigma x)_i = x_{\sigma(i)}$ for all $i=1,\ldots,n$. 
A $\Psi$-symmetrized spectral embedding of $G$ on $\mathcal{E}_\lambda$, $\lambda > 0$, is obtained by first picking an element $\varphi \in \mathcal{E}_\lambda$ and then constructing a matrix $P$ whose columns are all vectors $\sigma \varphi$ as $\sigma$ varies in $\Psi$. These columns are all in $\mathcal{E}_\lambda$ since $\mathcal{E}_\lambda$ is $\textup{Aut}(G)$-invariant  and hence also $\Psi$-invariant.
 The rows of $P$ form the $\Psi$-symmetrized embedding of $G$ on $\mathcal{E}_\lambda$ which we write as $\mathcal{P}_\Psi$. Keep in mind that if 
\begin{align}
    \mathcal{P}_\Psi = \{p_1, \ldots, p_n\} \textup{ then } p_i = (\varphi_{\sigma(i)})_{\sigma \in \Psi}.
\end{align}

Here is a reason to care about symmetrized spectral embeddings of $G$. 

\begin{lemma} \label{lem:edge-transitive}
    If $G$ is edge-transitive with respect to $\Psi$, then for any $\lambda > 0$, a $\Psi$-symmetrized embedding 
    $\mathcal{P}_\Psi$ on $\mathcal{E}_\lambda$ is edge-isometric. 
\end{lemma}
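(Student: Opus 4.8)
The plan is to show that any two edges get the same length in the symmetrized embedding $\mathcal{P}_\Psi$ by transporting one edge to the other using the group action. Fix a nonzero $\varphi \in \mathcal{E}_\lambda$ and recall the explicit description of the embedding: the row assigned to vertex $v_i$ is $p_i = (\varphi_{\sigma(i)})_{\sigma \in \Psi}$, a vector indexed by the group elements of $\Psi$. For an edge $(i,j) \in E$, the squared length in the embedding is therefore
\begin{align}
\|p_i - p_j\|^2 = \sum_{\sigma \in \Psi} \left(\varphi_{\sigma(i)} - \varphi_{\sigma(j)}\right)^2.
\end{align}

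First I would fix two arbitrary edges $(i,j)$ and $(k,\ell)$ of $G$. By edge-transitivity with respect to $\Psi$, there is some $\tau \in \Psi$ carrying $(i,j)$ to $(k,\ell)$, i.e. $\{\tau(i),\tau(j)\} = \{k,\ell\}$. The key step is then a reindexing of the summation defining $\|p_k - p_\ell\|^2$: substituting $\sigma \mapsto \sigma\tau$ and using that right-multiplication by $\tau$ is a bijection of $\Psi$, the sum over $\sigma \in \Psi$ is unchanged as a set, while each summand $(\varphi_{\sigma(k)} - \varphi_{\sigma(\ell)})^2$ becomes $(\varphi_{\sigma\tau(i)} - \varphi_{\sigma\tau(j)})^2$ (or with $i,j$ swapped, which does not affect the square). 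This matches term-by-term the sum defining $\|p_i - p_j\|^2$, so the two squared lengths coincide. Since $(i,j)$ and $(k,\ell)$ were arbitrary, all edges have a common length $c \ge 0$.

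It remains to check $c > 0$, as required by Definition~\ref{def:edge iso embedding}. The only way the common length could vanish is if $\varphi_{\sigma(i)} = \varphi_{\sigma(j)}$ for every $\sigma \in \Psi$ and every edge $(i,j)$; in particular, taking $\sigma$ to be the identity shows $\varphi$ would be constant along every edge, hence constant on the connected graph $G$, forcing $\varphi \in \mathcal{E}_0 = \mathrm{span}\{\mathbf{1}\}$. This contradicts $\varphi \in \mathcal{E}_\lambda$ with $\lambda > 0$. Therefore $c > 0$ and $\mathcal{P}_\Psi$ is edge-isometric.

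I expect the main (and only genuine) obstacle to be the careful bookkeeping in the reindexing step, specifically verifying that the substitution $\sigma \mapsto \sigma\tau$ (rather than $\sigma \mapsto \tau\sigma$) is the correct one given the convention $(\sigma x)_i = x_{\sigma(i)}$, and handling the possible swap $\{\tau(i),\tau(j)\} = \{k,\ell\}$ versus $\tau(i)=k, \tau(j)=\ell$. Both are resolved by noting the summand is symmetric in its two indices and that $\Psi$ is closed under the group operation; no deeper input is needed.
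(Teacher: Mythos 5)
Your proof is correct, but it takes a more direct route than the paper's. The paper expands $\|p_i-p_j\|^2 = \|p_i\|^2 + \|p_j\|^2 - 2\sum_{\sigma\in\Psi}\varphi_{\sigma(i)}\varphi_{\sigma(j)}$ and handles the two kinds of terms separately: the cross term is constant because all edges lie in one $\Psi$-orbit, while the norm terms are controlled by invoking the fact that an edge-transitive graph has at most two vertex orbits, which (if there are two) form a bipartition. Your argument instead transports the entire sum $\sum_{\sigma\in\Psi}(\varphi_{\sigma(i)}-\varphi_{\sigma(j)})^2$ from one edge to another by the substitution $\sigma\mapsto\sigma\tau$, which is a clean change of variables (your bookkeeping is right: $\sigma(\tau(i))=(\sigma\tau)(i)$, right-translation is a bijection of $\Psi$, and the square absorbs the possible swap of endpoints). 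This buys you a shorter proof that needs no structural fact about vertex orbits of edge-transitive graphs. What the paper's decomposition buys in exchange is that it isolates exactly the quantity $\sum_{\sigma\in\Psi}\varphi_{\sigma(i)}\varphi_{\sigma(j)}$ that becomes the vector $\varphi_\Psi$ of Definition~\ref{def:phiH}, setting up the subsequent analysis of vertex-transitive graphs where only this cross term varies between edge orbits. Your positivity argument also differs slightly in order (you first establish a common length $c$ for all edges and then rule out $c=0$ via connectivity, whereas the paper rules out a single degenerate edge and then spreads the degeneracy by edge-transitivity), but both are valid.
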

\begin{proof}
    The vector $p_i$ is the embedding of vertex $v_i$ and its squared norm is 
    \begin{align} \label{eq:norm square vertex}
        \|p_i\|^2 = p_i^\top p_i = \sum_{\sigma \in \Psi} \varphi_{\sigma(i)}^2.
    \end{align}
    The last expression tells us that $\|p_i\|^2$ only depends on the $\Psi$-orbit of $v_i$ and not on the chosen $v_i$. 
        For an edge $(i,j) \in E$, 
    \begin{align} \label{eq:norm square edge}
    \begin{split}
        \|p_i - p_j \|^2 & =p_i^\top p_i + p_j^\top p_j - 2p_i^\top p_j \\
        & = \sum_{\sigma \in \Psi} \varphi_{\sigma(i)}^2 + \sum_{\sigma \in \Psi}  \varphi_{\sigma(j)}^2 - 2 \sum_{\sigma \in \Psi}  \varphi_{\sigma(i)} \varphi_{\sigma(j)}.
     \end{split}   
    \end{align}
    Since $G$ is edge-transitive with respect to $\Psi$, all edges are in a single $\Psi$-orbit and the last sum is independent of the chosen edge $(i,j)$. Further, the vertices of $G$ fall into at most two orbits; if there are two vertex orbits then they form a bipartition of $G$. Either way, the first two sums are either one or two fixed numbers, depending on the graph $G$, and  $\mathcal{P}_\Psi$ is edge-isometric as long as $\|p_i - p_j\| > 0$.

    If $\|p_i - p_j\|=0$ for some $(i,j) \in E$, then 
    $\varphi_{\sigma(i)} = \varphi_{\sigma(j)}$ for all $\sigma \in \Psi$. Since $G$ is edge-transitive, for any edge $(k,\ell) \in E$, there is a $\sigma \in \Psi$ such that $k = \sigma(i)$ and $\ell = \sigma(j)$, or $k = \sigma(j)$ and $\ell = \sigma(i)$.  By the connectivity of $G$ it must be that $\varphi$ is a constant vector which contradicts that $\lambda > 0$.
\end{proof}

Lemma~\ref{lem:edge-transitive} gives a new proof that edge-transitive graphs are conformally rigid, a result that was proved in \cite{steinthomas}. Next we consider vertex-transitive graphs. 

\begin{lemma} \label{lem:vertex-transitive}
    If $G$ is vertex-transitive with respect to $\Psi$, then a symmetrized embedding $\mathcal{P}_\Psi$ on $\mathcal{E}_\lambda$ is spherical.
\end{lemma}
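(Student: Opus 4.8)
The plan is to reuse the norm computation already carried out in the proof of Lemma~\ref{lem:edge-transitive}. There it was shown that for the symmetrized embedding $\mathcal{P}_\Psi = \{p_1, \dots, p_n\}$ associated to a vector $\varphi \in \mathcal{E}_\lambda$, the squared norm of the $i$-th point is
$$\|p_i\|^2 = p_i^\top p_i = \sum_{\sigma \in \Psi} \varphi_{\sigma(i)}^2.$$
Sphericity is exactly the statement that this quantity is independent of $i$, so the entire proof reduces to showing that the right-hand side depends only on the $\Psi$-orbit of $v_i$. Vertex-transitivity with respect to $\Psi$ then forces a single orbit, which finishes the argument.

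The key step is a relabeling (bijection) argument on the index set of the sum. Suppose $v_i$ and $v_j$ lie in the same $\Psi$-orbit, so that $j = \tau(i)$ for some $\tau \in \Psi$. Substituting and using $\sigma(\tau(i)) = (\sigma \circ \tau)(i)$ gives
$$\|p_j\|^2 = \sum_{\sigma \in \Psi} \varphi_{\sigma(\tau(i))}^2 = \sum_{\sigma \in \Psi} \varphi_{(\sigma \circ \tau)(i)}^2.$$
Because $\Psi$ is a group, the map $\sigma \mapsto \sigma \circ \tau$ is a bijection of $\Psi$ onto itself, so reindexing the sum by $\sigma' = \sigma \circ \tau$ yields $\|p_j\|^2 = \sum_{\sigma' \in \Psi} \varphi_{\sigma'(i)}^2 = \|p_i\|^2$. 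Hence $\|p_i\|$ is constant on each $\Psi$-orbit of vertices, and since $G$ is vertex-transitive with respect to $\Psi$, all vertices lie in one $\Psi$-orbit, so all the $\|p_i\|$ coincide and $\mathcal{P}_\Psi$ is spherical.

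I do not expect any real obstacle here. The only point requiring care is the direction of the permutation composition (the paper uses $(\sigma x)_i = x_{\sigma(i)}$), but whichever convention is in force, closure of $\Psi$ under composition makes $\sigma \mapsto \sigma \circ \tau$ a bijection, which is all the reindexing needs. It is also worth noting that, unlike in Lemma~\ref{lem:edge-transitive}, no lower bound on the edge lengths $\|p_i - p_j\|$ is needed and no assumption on the number of edge orbits enters, since sphericity is purely a statement about the vertices. In fact the same computation shows the converse flavor of remark: $\|p_i\|$ is always an orbit invariant, so the hypothesis of vertex-transitivity is precisely what collapses these invariants to a single value.
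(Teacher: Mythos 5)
Your proof is correct and follows essentially the same route as the paper, which likewise reuses the identity $\|p_i\|^2 = \sum_{\sigma \in \Psi} \varphi_{\sigma(i)}^2$ from the proof of Lemma~\ref{lem:edge-transitive} and concludes from the single vertex orbit; you merely spell out the reindexing $\sigma \mapsto \sigma \circ \tau$ that the paper leaves implicit. The paper additionally records that the common value satisfies $\|p_i\|^2 \geq \|\varphi\|^2 > 0$, but this is not required by the definition of a spherical embedding.
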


\begin{proof}
    Since all vertices are in the same $\Psi$-orbit,  \eqref{eq:norm square vertex} is the same for all vertices of $G$, and 
    $\|p_i\|^2 \geq \|\varphi\|^2 > 0$.
\end{proof}

Let then $G$ be vertex-transitive with respect to $\Psi$, and we fix a $\Psi$-symmetrized embedding $\mathcal{P}_\Psi$ of $G$ on $\mathcal{E}_\lambda$, where again, $p_i$ is the embedding of vertex $v_i$. For an edge $(i,j) \in E$ consider the equation \eqref{eq:norm square edge}. The first two sums are just the squared norms of $p_i$ and $p_j$ which are equal since all vertices lie in a single $\Psi$-orbit. Therefore, 
\eqref{eq:norm square edge} depends only on the sum
\begin{align}\label{eq:edge quantity}
    \sum_{\sigma \in \Psi}  \varphi_{\sigma(i)} \varphi_{\sigma(j)}
\end{align}
which depends only on the $\Psi$-orbit of $(i,j)$.

\begin{definition} \label{def:phiH}
Let $\mathcal{EO}_\Psi$ denote the collection of edge orbits of $G$ with respect to a subgroup $\Psi$ of $\textup{Aut}(G)$. 
 For $\lambda > 0$ and $\varphi \in \mathcal{E}_\lambda$ with $\|\varphi\|=1$, define the following vector with components indexed by $\mathcal{EO}_\Psi$:
\begin{align} \label{eq:phipsi vector}
    \varphi_\Psi := \left(  \sum_{\sigma \in \Psi}  \varphi_{\sigma(i)} \varphi_{\sigma(j)} \right)_{\mathcal{EO}_\Psi}. 
\end{align}
\end{definition}

\begin{theorem} \label{thm:eigevec certificate}
    Suppose $G$ is vertex-transitive with respect to a subgroup $\Psi$ of $\textup{Aut}(G)$, and $\lambda>0$. Then $G$ has an edge-isometric $\Psi$-symmetrized embedding on $\mathcal{E}_\lambda$ if and only if there is an eigenvector $\varphi \in \mathcal{E}_\lambda$, $\|\varphi\|=1$, for which $\varphi_\Psi$ is a multiple of ${\bf 1}$. This embedding is also spherical when it exists. 
\end{theorem}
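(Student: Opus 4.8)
The statement asks me to show an equivalence between the existence of an edge-isometric $\Psi$-symmetrized embedding on $\mathcal{E}_\lambda$ and the existence of a unit eigenvector $\varphi \in \mathcal{E}_\lambda$ whose associated vector $\varphi_\Psi$ (indexed by edge orbits) is a scalar multiple of $\mathbf{1}$. The key observation assembled in the paragraph preceding Definition~\ref{def:phiH} is that, under vertex-transitivity with respect to $\Psi$, equation \eqref{eq:norm square edge} for $\|p_i - p_j\|^2$ collapses: the first two sums are both equal to the common squared norm $\|\varphi\|^2_{\text{orbit}}$ (independent of the vertex, by Lemma~\ref{lem:vertex-transitive}), so $\|p_i - p_j\|^2$ depends on the edge only through the quantity $\sum_{\sigma \in \Psi} \varphi_{\sigma(i)}\varphi_{\sigma(j)}$ in \eqref{eq:edge quantity}, which itself depends only on the $\Psi$-orbit of the edge $(i,j)$. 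So the whole proof is really just unwinding what edge-isometry means in terms of the vector $\varphi_\Psi$.

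The plan is as follows. First I would fix a unit eigenvector $\varphi \in \mathcal{E}_\lambda$ and form the associated symmetrized embedding $\mathcal{P}_\Psi$ with rows $p_i = (\varphi_{\sigma(i)})_{\sigma \in \Psi}$. Using \eqref{eq:norm square edge}, I would write, for an edge $(i,j)$ in orbit $O \in \mathcal{EO}_\Psi$,
\begin{equation}
\|p_i - p_j\|^2 = 2\|\varphi\|^2_v - 2(\varphi_\Psi)_O,
\end{equation}
where $\|\varphi\|^2_v = \sum_{\sigma \in \Psi} \varphi_{\sigma(i)}^2$ is the common squared vertex norm (constant across all vertices by vertex-transitivity) and $(\varphi_\Psi)_O$ is the coordinate of $\varphi_\Psi$ indexed by the orbit $O$. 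Edge-isometry means $\|p_i - p_j\|^2$ equals a single constant $c^2 > 0$ for every edge, i.e. across all orbits $O$. Since the $2\|\varphi\|^2_v$ term is already orbit-independent, this holds precisely when $(\varphi_\Psi)_O$ is the same constant for every orbit $O$ — that is, when $\varphi_\Psi$ is a multiple of $\mathbf{1}$. This gives both directions of the equivalence at once, modulo the positivity requirement.

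For the positivity/non-degeneracy point I would argue that if $\varphi_\Psi = \gamma \mathbf{1}$, then $c^2 = 2\|\varphi\|^2_v - 2\gamma$ is a single fixed value, and I must rule out $c = 0$. If $c = 0$ then $p_i = p_j$ for some (hence, by the constancy, every) edge, forcing $\varphi_{\sigma(i)} = \varphi_{\sigma(j)}$ for all $\sigma \in \Psi$ along every edge; by the connectivity of $G$ and the transitivity argument used in the proof of Lemma~\ref{lem:edge-transitive}, this would force $\varphi$ to be constant, contradicting $\lambda > 0$. The final sentence of the theorem, that the embedding is spherical whenever it exists, is immediate from Lemma~\ref{lem:vertex-transitive}, since $\mathcal{P}_\Psi$ is spherical for \emph{any} choice of $\varphi$ under vertex-transitivity. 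I do not expect a genuine obstacle here: the heavy lifting is done by the preceding lemmas and the orbit-invariance discussion, and the only subtle point is the $c > 0$ argument, which reuses the connectivity trick already established for the edge-transitive case.
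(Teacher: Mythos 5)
Your proposal is correct and follows essentially the same route as the paper: both read off $\|p_i-p_j\|^2 = 2\|p_i\|^2 - 2(\varphi_\Psi)_O$ from \eqref{eq:norm square edge} under vertex-transitivity, so edge-isometry is equivalent to $\varphi_\Psi$ being constant, the degenerate case $c=0$ is excluded by noting it forces $\varphi_i=\varphi_j$ along every edge and hence $\varphi$ constant (contradicting $\lambda>0$), and sphericity comes from Lemma~\ref{lem:vertex-transitive}. The only cosmetic difference is that you attribute the $c=0$ exclusion to the edge-transitive lemma's argument, whereas here it is the constancy of the edge length plus connectivity (with $\sigma=\mathrm{id}$) that does the work — which you in fact use correctly.
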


\begin{proof}
By our observations above, we have an edge-isometric $\Psi$-symmetrized embedding based on an eigenvector $\varphi  \in \mathcal{E}_\lambda$, if and only if  $\varphi_\Psi$ is a constant vector and $\|p_i-p_j\|^2>0$ for any edge $(i,j)$.  If $\|p_i-p_j\|^2=0$ then $\varphi_{\sigma(i)} = \varphi_{\sigma(j)}$ for all $\sigma \in \Psi$ which means that $\varphi_i = \varphi_j$. Therefore, $\varphi$ 
is constant along connected components of $G$ and hence, $\lambda=0$, a contradiction. 
The last statement of the theorem follows from Lemma~\ref{lem:vertex-transitive}.
\end{proof}

This means that the existence of a constant $\varphi_\Psi$ certifies the existence of a spherical edge-isometric ($\Psi$-symmetrized) embedding of $G$ on $\mathcal{E}_\lambda$. We remark that the condition $\| \varphi \|=1$ is just a convenient normalization, and not integral to the result. 

\begin{corollary} \label{cor:generaleigenvectorcriterion}
    Suppose $G$ is vertex-transitive with respect to a subgroup $\Psi$ of $\textup{Aut}(G)$. Then $G$ is lower conformally rigid if there is a $\varphi \in \mathcal{E}_{\lambda_2}$ for which  $\varphi_\Psi$ is a multiple of ${\bf 1}$. The analogous statement holds for upper conformal rigidity. 
\end{corollary}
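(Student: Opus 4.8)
The plan is to chain together the two results already in hand: Theorem~\ref{thm:eigevec certificate} converts the eigenvector hypothesis into the existence of an edge-isometric $\Psi$-symmetrized embedding, and Proposition~\ref{prop:embedding certificates} converts such an embedding into conformal rigidity. Since the corollary is only a one-directional implication (an ``if'', not an ``iff''), it suffices to run this chain forward in the lower case, and then repeat it for the upper case.

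First I would dispose of a minor normalization point. Theorem~\ref{thm:eigevec certificate} requires $\|\varphi\|=1$, whereas the corollary imposes no such condition. This is harmless: if $\varphi_\Psi$ is a multiple of ${\bf 1}$, then replacing $\varphi$ by $\varphi/\|\varphi\|$ multiplies each product $\varphi_{\sigma(i)}\varphi_{\sigma(j)}$, and hence each entry of the vector $\varphi_\Psi$ in \eqref{eq:phipsi vector}, by the common factor $\|\varphi\|^{-2}$. The rescaled vector therefore still has $\varphi_\Psi$ proportional to ${\bf 1}$, so we may assume $\|\varphi\|=1$ without loss of generality.

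Next I would apply Theorem~\ref{thm:eigevec certificate} with $\lambda=\lambda_2>0$ (positive because $G$ is connected). The hypothesis that $\varphi_\Psi$ is a multiple of ${\bf 1}$ then furnishes an edge-isometric $\Psi$-symmetrized embedding $\mathcal{P}_\Psi$ on $\mathcal{E}_{\lambda_2}$. The one bookkeeping observation to record is that this $\mathcal{P}_\Psi$ is a genuine spectral embedding in the sense of Definition~\ref{def:embedding}, since the columns of the symmetrizing matrix $P$ all lie in $\mathcal{E}_{\lambda_2}$. Feeding this edge-isometric spectral embedding into the lower-rigidity direction of Proposition~\ref{prop:embedding certificates} yields that $G$ is lower conformally rigid.

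For the upper statement I would repeat the argument verbatim with $\lambda_2$ replaced by $\lambda_n$ (again positive for a connected graph with at least one edge), obtaining an edge-isometric symmetrized embedding on $\mathcal{E}_{\lambda_n}$ and invoking the upper-rigidity direction of Proposition~\ref{prop:embedding certificates}. I do not anticipate any genuine obstacle: both ingredients are already proved, and the only steps demanding care are the free normalization to $\|\varphi\|=1$ and the identification of a symmetrized embedding as a spectral embedding, both of which are immediate.
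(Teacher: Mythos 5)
Your proposal is correct and follows exactly the route the paper intends: the corollary is the immediate combination of Theorem~\ref{thm:eigevec certificate} (which produces the edge-isometric $\Psi$-symmetrized embedding on $\mathcal{E}_{\lambda_2}$, respectively $\mathcal{E}_{\lambda_n}$) with Proposition~\ref{prop:embedding certificates}, and your normalization remark matches the paper's own comment that $\|\varphi\|=1$ is merely a convenient, non-essential normalization.
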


\begin{example}
 Take $G$ to be the complement of the Shrikhande graph shown in Figure~\ref{fig:Shrikhande complement}. 
 \begin{figure}[h!]
     \centering
     \includegraphics[width=0.4\linewidth]{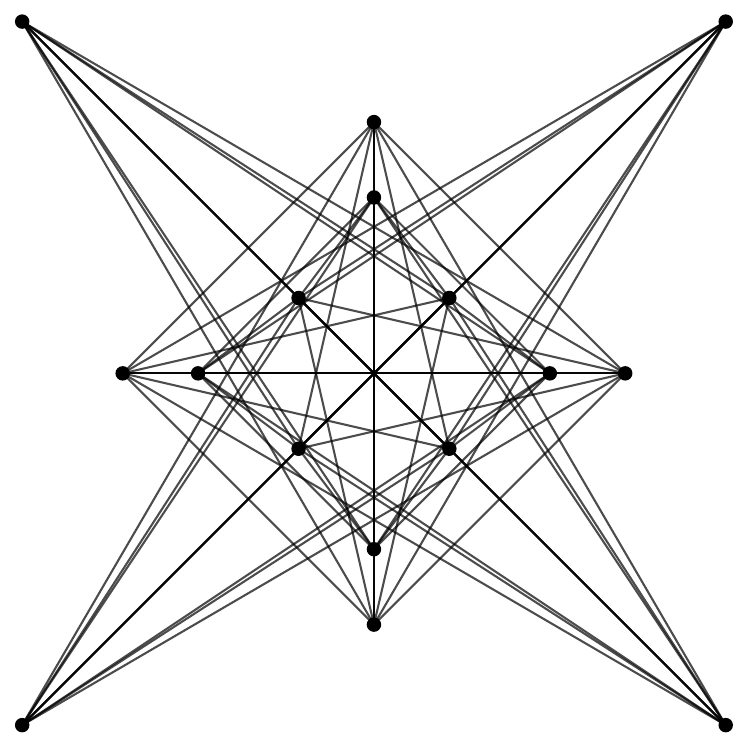}
     \caption{The complement of the Shrikhande graph.}
     \label{fig:Shrikhande complement}
 \end{figure}
 This graph has $16$ vertices, is vertex-transitive with respect to $\textup{Aut}(G)$  but not edge-transitive. Its $72$ edges fall into two orbits with respect to $\textup{Aut}(G)$. The Laplacian $L$ has three distinct eigenvalues: $12, 8, 0$ with multiplicities $6,9,1$ respectively. The graph $G$ is $1$-walk regular and hence conformally rigid. 
 We now show that $G$ has a certificate as in Corollary~\ref{cor:generaleigenvectorcriterion} for $\lambda_2 = 8$. Pick 
 
 {\small{
 \begin{align*} 
 \varphi 
 & =  \frac{3}{7} \left(-\sqrt{\frac{5}{3}}-\sqrt{\frac{11}{15}}\right) 
 \underbrace{\left(0,0,\frac{1}{\sqrt{6}},-\frac{1}{\sqrt{6}},-\frac{1}{\sqrt{6}},\frac{1}{\sqrt{6}},0,-\frac{1}{\sqrt{6}},0,0,0,0,0,0,0,\frac{1}{\sqrt{6}}\right)}_{\varphi^1} \\
 & +  \underbrace{\left(\sqrt{\frac{2}{5}},-\frac{1}{\sqrt{10}},-\frac{1}{\sqrt{10}},0,-\frac{1}{\sqrt{10}},-\frac{1}{\sqrt{10}},0,\frac{1}{\sqrt{10}},0,0,0,0,0,0,\frac{1}{\sqrt{10}},0\right)}_{\varphi^2}.
 \end{align*}}}
 It is easy to check that $\varphi^1, \varphi^2$ are eigenvectors of $L$ with eigenvalue $8$.
  This $\varphi$ does not have unit norm but we can compute  \eqref{eq:phipsi vector} to be $$\varphi_{\textup{Aut}(G)} = \left( \frac{1}{245} \left(30-8 \sqrt{11}\right), \frac{1}{245} \left(30-8 \sqrt{11}\right) \right).$$
 Therefore, $\varphi_{\textup{Aut}(G)}$ is  a certificate for the lower conformal rigidity of $G$. \qed
\end{example}

Even without symmetrized embeddings, the vectors $\varphi_\Psi$ can give us information about edge-isometric embeddings of $G$, as we see next. 

\begin{definition} \label{def:convex set phiH}
Define the convex set 
\begin{align}
    C(\Psi,\lambda) := \textup{conv} \left( \varphi_\Psi \,:\, \varphi \in \mathcal{E}_\lambda, \,\,\|\varphi\|=1 \right)
\end{align}
\end{definition}

Since $\{\varphi \in \mathcal{E}_\lambda, \|\varphi\|=1 \}$ is compact, its image under the polynomial map $\varphi \mapsto \varphi_\Psi$ is also compact, and 
$C(\Psi,\lambda)$ is a compact convex set in $\RR^{\mathcal{EO}_\Psi}$.

\begin{theorem} \label{thm:embeddingcharacterization}
    Suppose $G$ is vertex-transitive with respect to a subgroup $\Psi$ of $\textup{Aut}(G)$, and $\lambda>0$. Then $G$ has an edge-isometric embedding on $\mathcal{E}_\lambda$ if and only if the line $t {\bf 1}$ intersects the convex set ${C}(\Psi, \lambda)$.
\end{theorem}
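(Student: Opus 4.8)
The plan is to pass from embeddings to their Gram matrices and to exploit that group-symmetrization preserves edge-isometry. Recall that a spectral embedding on $\mathcal{E}_\lambda$ is encoded by its Gram matrix $Q = PP^\top$, a positive semidefinite matrix whose range lies in $\mathcal{E}_\lambda$; conversely, every such matrix is the Gram matrix of some embedding. In these terms vertex $v_i$ is sent to $p_i$ with $\langle p_i, p_j\rangle = Q_{ij}$, and the embedding is edge-isometric exactly when $Q_{ii} + Q_{jj} - 2Q_{ij} = c^2$ for a fixed $c>0$ on every edge $(i,j)$. The objects linking this picture to $C(\Psi,\lambda)$ are the matrices $M_\varphi := \sum_{\sigma \in \Psi} (\sigma\varphi)(\sigma\varphi)^\top$ for $\varphi \in \mathcal{E}_\lambda$, $\|\varphi\|=1$. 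Each $M_\varphi$ is positive semidefinite with range in $\mathcal{E}_\lambda$, is $\Psi$-invariant, has constant diagonal $|\Psi|/n$ by vertex-transitivity, and, crucially, its entry $(M_\varphi)_{ij}$ at an edge is precisely the coordinate of $\varphi_\Psi$ indexed by the orbit of $(i,j)$. Thus the edge-orbit entries of $M_\varphi$ are exactly $\varphi_\Psi$, and this theorem extends Theorem~\ref{thm:eigevec certificate} by replacing a single $\varphi$ with $\varphi_\Psi \propto \mathbf{1}$ by a convex combination.

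For the ``if'' direction, suppose $t\mathbf{1} = \sum_k \theta_k (\varphi_k)_\Psi$ is a convex combination lying in $C(\Psi,\lambda)$, with $\|\varphi_k\|=1$. I would form $\bar Q := \sum_k \theta_k M_{\varphi_k}$, which is positive semidefinite with range in $\mathcal{E}_\lambda$, has constant diagonal $|\Psi|/n$, and whose edge-orbit entries equal $\sum_k \theta_k (\varphi_k)_\Psi = t\mathbf{1}$; so $\bar Q_{ij} = t$ on every edge. Hence the embedding with Gram matrix $\bar Q$ satisfies $\|p_i - p_j\|^2 = 2(|\Psi|/n - t)$ on every edge, a single constant. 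It remains to check this constant is positive: if it were zero then $p_i = p_j$ on every edge, so by connectivity the embedding is constant, forcing every column of $P$ to be a constant vector lying in $\mathcal{E}_\lambda \perp \mathbf{1}$, hence $\bar Q = 0$, contradicting the nonzero diagonal. Therefore $G$ has an edge-isometric embedding on $\mathcal{E}_\lambda$.

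For the ``only if'' direction, let $Q$ be the Gram matrix of an edge-isometric embedding with constant $c$, and symmetrize it: $\bar Q := \frac{1}{|\Psi|}\sum_{\sigma\in\Psi} P_\sigma Q P_\sigma^\top$, where $P_\sigma$ is the permutation matrix of $\sigma$. Then $\bar Q$ is positive semidefinite with range in $\mathcal{E}_\lambda$ and is $\Psi$-invariant. The key point is that edge-isometry is inherited: averaging the relation $Q_{\sigma(i)\sigma(i)} + Q_{\sigma(j)\sigma(j)} - 2Q_{\sigma(i)\sigma(j)} = c^2$ over $\sigma \in \Psi$ (each $(\sigma(i),\sigma(j))$ being an edge) gives $\bar Q_{ii} + \bar Q_{jj} - 2\bar Q_{ij} = c^2$, while $\Psi$-invariance together with vertex-transitivity makes $\bar Q_{ii}$ equal to the constant $\bar d = \mathrm{tr}(Q)/n$. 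Hence $\bar Q_{ij} = \bar d - c^2/2 =: s$ is the same on every edge, so the edge-orbit vector of $\bar Q$ is $s\mathbf{1}$. Finally I would spectrally decompose $\bar Q = \sum_\ell \mu_\ell v_\ell v_\ell^\top$ into orthonormal eigenvectors $v_\ell \in \mathcal{E}_\lambda$; averaging this decomposition over $\Psi$ and using invariance rewrites it as $\bar Q = \frac{1}{|\Psi|}\sum_\ell \mu_\ell M_{v_\ell}$. Reading off edge-orbit entries gives $s\mathbf{1} = \frac{1}{|\Psi|}\sum_\ell \mu_\ell (v_\ell)_\Psi$, and dividing by $A := \mathrm{tr}(\bar Q)/|\Psi| = n\bar d/|\Psi| > 0$ exhibits $(s/A)\mathbf{1}$ as a convex combination of the $(v_\ell)_\Psi$, hence a point of $C(\Psi,\lambda)$ on the line $t\mathbf{1}$.

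I expect the main obstacle to be the ``only if'' direction, specifically the claim that group-averaging the Gram matrix preserves edge-isometry with the same constant $c$: this relies on each $\sigma$ carrying edges to edges and on vertex-transitivity to flatten the diagonal, and it is what allows the reduction to $\Psi$-invariant Gram matrices. The subsequent normalization, namely verifying $A>0$ so that the non-negative combination of the $M_{v_\ell}$ can be rescaled into the convex hull $C(\Psi,\lambda)$, is the other place requiring care, handled through positive-semidefiniteness and the nonvanishing diagonal.
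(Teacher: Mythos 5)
Your proof is correct and rests on the same core mechanism as the paper's: symmetrize over $\Psi$, use vertex-transitivity to make the diagonal (vertex norms) constant, and identify the edge-orbit data of the symmetrized object with a convex combination of the vectors $\varphi_\Psi$. The only differences are cosmetic --- you carry the argument at the level of Gram matrices $Q = PP^\top$ rather than embedding columns (which is exactly the repackaging the paper itself adopts later in its SDP section), and in the ``if'' direction you explicitly rule out the degenerate case of zero edge length, a point the paper's proof leaves implicit.
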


\begin{proof}
Suppose we have an edge-isometric embedding $\mathcal{P}'$ on $\mathcal{E}_\lambda$, where we can assume that the columns of $P'$ are the  eigenvectors $a_1 \varphi^1,\ldots,a_d \varphi^d$, where $\|\varphi^i\|=1$ and $\sum a_i^2=1$, and the length of an edge in the embedding is $c >0$. For all $\sigma \in \Psi$, 
\begin{align}
    c^2 = \sum_{k=1}^d a_k^2 (\varphi^k_i - \varphi^k_j)^2 = \sum_{k=1}^d a_k^2 (\varphi^k_{\sigma(i)} - \varphi^k_{\sigma(j)})^2
\end{align}
 since $(i,j) \in E$ implies that $(\sigma(i), \sigma(j)) \in E$.
We $\Psi$-symmetrize $\mathcal{P}'$ by including the vectors $a_i\sigma \varphi^i$ for every $\sigma \in \Psi$ and every column $a_i\varphi^i$, attaining a new embedding $\mathcal{P}$ on $\mathcal{E}_\lambda$.
Let once more $p_i$ and $p_j$ be the embeddings of vertices $v_i$ and $v_j$ in $\mathcal{P}$. 
If $(i,j) \in E$, then
\begin{equation}
        \|p_i - p_j \|^2  
         = \sum_{\sigma \in \Psi} \sum_{k=1}^d a_k^2(\varphi^k_{\sigma(i)} -\varphi^k_{\sigma(j)} )^2 = \|\Psi\| c^2
\end{equation} 
and so, $\mathcal{P}$ is again, an edge-isometric embedding of $G$ on $\mathcal{E}_\lambda$. Moreover,
 \begin{equation} \label{eq:symmetrizededgelength}
        \|p_i - p_j \|^2  
         = \sum_{k=1}^d a_k^2\left(\sum_{\sigma \in \Psi} (\varphi^k_{\sigma(i)})^2 + \sum_{\sigma \in \Psi}  (\varphi^k_{\sigma(j)})^2 - 2 \sum_{\sigma \in \Psi}  \varphi^k_{\sigma(i)} \varphi^k_{\sigma(j)}\right).
\end{equation} 
This quantity being independent of the edge $(i,j)$ is equivalent to 
\begin{equation}\label{eq:orbitconvexcombination}
\left(\sum_{k=1}^d a_k^2\sum_{\sigma \in \Psi}  \varphi^k_{\sigma(i)} \varphi^k_{\sigma(j)}\right)_{\mathcal{EO}_\Psi} = \sum_{k=1}^d a_k^2 \varphi^k_\Psi.
\end{equation} 
being a constant vector. Since $\sum a_k^2=1$ this is a convex combination of $\{\varphi_\Psi^k\}$ and so it lies in ${C}(\Psi, \lambda)$.
Conversely, if there is a constant vector in ${C}(\Psi, \lambda)$, we can write it as $\sum_{k=1}^d b_k^2 \psi^k_\Psi$ for some $\psi^k \in \mathcal{E}_{\lambda}, \,\|\psi^k \|=1$ and $\sum b_k^2 =1$. Reversing \eqref{eq:orbitconvexcombination} and \eqref{eq:symmetrizededgelength} we get an edge-isometric embedding on $\mathcal{E}_\lambda$.
\end{proof}

\begin{corollary} \label{cor:cr_eq_formulation}
    Suppose $G$ is vertex-transitive with respect to a subgroup $\Psi$ of $\textup{Aut}(G)$ and $\lambda > 0$. Then $G$ is lower conformally rigid if and only if the line $t {\bf 1}$ intersects the convex set ${C}(\Psi, \lambda_2)$. The analogous statement holds for upper conformal rigidity. 
\end{corollary}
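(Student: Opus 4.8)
The plan is to recognize Corollary~\ref{cor:cr_eq_formulation} as the direct composition of two results already in hand: the embedding characterization of conformal rigidity in Proposition~\ref{prop:embedding certificates}, and the geometric characterization of edge-isometric embeddings in Theorem~\ref{thm:embeddingcharacterization}. No new ideas are needed; the work is entirely in matching up hypotheses and chaining equivalences.

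First I would invoke Proposition~\ref{prop:embedding certificates}, which states that $G$ is lower conformally rigid if and only if it admits an edge-isometric embedding on $\mathcal{E}_{\lambda_2}$. This converts the claim about conformal rigidity into one about the existence of a particular embedding. Next, since $G$ is connected we have $\lambda_2 > 0$, so $\lambda = \lambda_2$ is an admissible choice in Theorem~\ref{thm:embeddingcharacterization}. Applying that theorem with $\lambda = \lambda_2$ gives that $G$ has an edge-isometric embedding on $\mathcal{E}_{\lambda_2}$ precisely when the line $t{\bf 1}$ meets $C(\Psi, \lambda_2)$. Concatenating the two equivalences yields the stated characterization of lower conformal rigidity.

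For the upper case I would argue identically, replacing $\lambda_2$ by $\lambda_n$ throughout and using the second halves of both cited results; the only hypothesis to check is $\lambda_n > 0$, which holds since $\lambda_n \geq \lambda_2 > 0$. The main (and only) point requiring care is precisely this alignment of hypotheses: one must confirm that the positivity requirement $\lambda > 0$ of Theorem~\ref{thm:embeddingcharacterization} is automatically met at both nontrivial endpoints of the spectrum, so that the generic statement specializes cleanly to $\lambda_2$ and $\lambda_n$. Beyond that, the proof is a purely formal chaining and requires no computation.
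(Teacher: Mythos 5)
Your proof is correct and matches the paper's (implicit) argument exactly: the corollary is stated without proof precisely because it is the immediate concatenation of Proposition~\ref{prop:embedding certificates} with Theorem~\ref{thm:embeddingcharacterization} applied at $\lambda = \lambda_2$ and $\lambda = \lambda_n$, both positive by connectivity. Nothing further is needed.
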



\begin{question}
Is it true that for a vertex-transitive graph, the existence of an edge-isometric embedding in the second (and last) eigenspace implies the existence of a symmetrized edge-isometric embedding in that eigenspace? 
\end{question}

If the answer to this question is ``yes", then there will exist a $\varphi$ in the relevant eigenspace such that $\varphi_\Psi$ certifies conformal rigidity as in Corollary~\ref{cor:generaleigenvectorcriterion}. So far we do not have an example where no eigenvector $\varphi$ certifies conformal rigidity but the condition in Corollary~\ref{cor:cr_eq_formulation} is satisfied.

\section{Cayley graphs on abelian groups}
\subsection{Cayley Graphs}
Cayley graphs are  vertex-transitive graphs with respect to their underlying groups  and hence we can apply the above results to them. We recall that a Cayley graph $\textup{Cay}(\Gamma,S)$ is defined by a group $(\Gamma, \circ)$ and a subset of elements $S \subset \Gamma$  known as the {\em generating set}. The vertex set $V(G) = \Gamma$ is given by the elements of the group. Edges are given by adding a directed edge from $g$ to $g \circ s$. Since we are only interested in undirected graphs, we will insist on $s \in S \implies s^{-1} \in S$. Such an $S$ is said to be {\em symmetric}. In this case, for any directed edge $(g, g\circ s)$, we also have the directed edge $(g \circ s, (g \circ s)\circ s^{-1})$, and merging them we have an undirected edge connecting $g$ and $g \circ s$. 

The automorphism group of the Cayley graph $\textup{Cay}(\Gamma,S)$ contains $\Gamma$ but can be bigger, and $\textup{Cay}(\Gamma,S)$ is vertex-transitive with respect to the action of $\Gamma$. If an edge $(g, g \circ s)$ is labeled $s$, then all edges labeled $s$ are in the same $\Gamma$-orbit.  Therefore, for an eigenvector $\varphi$ we have in this case
\begin{align} \label{eq:real eigvec symm}
\varphi_\Gamma := \left(   \sum_{g \in \Gamma}  \varphi(g) \varphi(g \circ s) \right)_{s \in S}.
\end{align}

This is precisely the vector used in 
\cite[Theorem 2.3]{steinthomas} to describe a curious sufficient criterion for a Cayley graph to be conformally rigid: it is enough to find a single eigenvector satisfying a type of algebraic identity. The proof in \cite{steinthomas} was motivated by the calculus of variations and some unexpected algebraic simplifications. The full statement of \cite[Theorem 2.3]{steinthomas} is as follows.

\begin{theorem}[Theorem 2.3, \cite{steinthomas}] \label{thm:Cayley sufficiency}
    Let $\textup{Cay}(\Gamma,S)$ be a finite Cayley graph, where $S$ is symmetric. If there exists an eigenvector  $\varphi$ corresponding to $\lambda_2$ such that 
    \begin{align} \label{eq:Cayley sum property}
     \sum_{g \in \Gamma}  \varphi(g) \varphi(g \circ s) \qquad \mbox{is independent of}~s,
    \end{align}
    then $\textup{Cay}(\Gamma,S)$ is lower conformally rigid (and the analogous statement for upper conformal rigidity also holds).
\end{theorem}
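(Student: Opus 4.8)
The plan is to deduce this from the general vertex-transitive criterion already established, namely Corollary~\ref{cor:generaleigenvectorcriterion}, rather than repeating the variational argument of \cite{steinthomas}. Since $\textup{Cay}(\Gamma,S)$ is vertex-transitive with respect to the left-translation action of $\Gamma \leq \textup{Aut}(G)$, I would take $\Psi = \Gamma$ and aim to show that the hypothesis \eqref{eq:Cayley sum property} is exactly the statement that $\varphi_\Gamma$, and hence the orbit vector $\varphi_\Psi$ of Definition~\ref{def:phiH}, is a multiple of $\mathbf{1}$.

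The one genuine step is to identify the general orbit quantity with the Cayley-specific sum \eqref{eq:real eigvec symm}. Fix an edge $(i,j) = (g, g\circ s)$. Each $\gamma \in \Gamma$ acts as the permutation $v \mapsto \gamma \circ v$, so the component of $\varphi_\Psi$ associated to the orbit of this edge is $\sum_{\gamma \in \Gamma} \varphi(\gamma \circ g)\,\varphi(\gamma \circ g \circ s)$. Reindexing the sum by $h = \gamma \circ g$ — a bijection of $\Gamma$ for each fixed $g$ — collapses this to $\sum_{h \in \Gamma} \varphi(h)\varphi(h \circ s)$, precisely the $s$-component of $\varphi_\Gamma$. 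I would also record the two routine consistency checks: first, that all edges carrying the label $s$ form a single $\Gamma$-orbit, so this quantity is well defined as a function of the orbit; and second, that the $s$- and $s^{-1}$-components agree (substitute $h \mapsto h \circ s^{-1}$), matching the fact that $s$ and $s^{-1}$ label the same undirected edge orbit. Thus $\varphi_\Psi$ is obtained from $\varphi_\Gamma$ by grouping together each pair $\{s, s^{-1}\}$, whose components are equal, and $\varphi_\Gamma$ is constant on $S$ if and only if $\varphi_\Psi$ is constant on $\mathcal{EO}_\Psi$.

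With this identification in hand the conclusion is immediate. The hypothesis that $\sum_{g\in\Gamma}\varphi(g)\varphi(g\circ s)$ is independent of $s$ says exactly that $\varphi_\Psi$ is a multiple of $\mathbf{1}$. Taking $\varphi \in \mathcal{E}_{\lambda_2}$ and applying Corollary~\ref{cor:generaleigenvectorcriterion} with $\Psi = \Gamma$ yields lower conformal rigidity; taking instead $\varphi \in \mathcal{E}_{\lambda_n}$ gives upper conformal rigidity by the analogous half of that corollary. I do not expect any real obstacle here: once the symmetrized-embedding framework of \S4 is available, this theorem becomes essentially a one-line corollary, and the only thing requiring care is the bookkeeping of the group action (left versus right, and the $s \leftrightarrow s^{-1}$ identification) in the reindexing step above. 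The contrast with \cite{steinthomas}, where the same statement needed a delicate calculus-of-variations computation, is precisely the payoff of having recast conformal rigidity in terms of $\varphi_\Psi$ and $C(\Psi,\lambda)$.
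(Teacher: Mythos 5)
Your argument is correct, but it is worth being clear that the paper does not actually prove this statement itself: Theorem~\ref{thm:Cayley sufficiency} is imported verbatim from \cite{steinthomas}, where it was established by a calculus-of-variations computation (perturbing the edge weights, exploiting first-order optimality, and some algebraic simplification). What you have written is the re-derivation that the present paper's symmetrized-embedding machinery makes available, and it is sound. The key identification is exactly right: with $\Psi=\Gamma$ acting by left translation, the orbit of the edge $(g,g\circ s)$ is the set of all edges labelled $s$ (a fact the paper records just before introducing \eqref{eq:real eigvec symm}), the reindexing $h=\gamma\circ g$ turns the orbit sum of Definition~\ref{def:phiH} into the $s$-component of $\varphi_\Gamma$, and the substitution $h\mapsto h\circ s^{-1}$ shows the $s$- and $s^{-1}$-components coincide, so constancy of $\varphi_\Gamma$ over $S$ is equivalent to $\varphi_\Psi$ being a multiple of $\mathbf{1}$; Corollary~\ref{cor:generaleigenvectorcriterion} then gives lower (resp.\ upper) conformal rigidity. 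Two small remarks. First, the normalization $\|\varphi\|=1$ in Definition~\ref{def:phiH} is harmless since the condition is invariant under rescaling $\varphi$, as the paper itself notes. Second, your route retains the full generality of the original theorem --- it needs only vertex-transitivity under $\Gamma$, so $\Gamma$ need not be abelian --- in contrast to the sharper two-sided criterion of Theorem~\ref{thm:necc and suff condn cayley}, which does use the abelian character basis. What the variational proof in \cite{steinthomas} buys is independence from Proposition~\ref{prop:embedding certificates} and the embedding formalism; what your proof buys is exactly the conceptual payoff you identify, namely that the statement collapses to a one-line instance of the general vertex-transitive certificate.
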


   We illustrate Theorem~\ref{thm:Cayley sufficiency} on the circulant graph on $n=18$ vertices with generating set $S = \left\{1,5\right\}$ (or, depending on notational custom,  $S = \left\{-5,-1,1,5\right\}$). The eigenspace corresponding to $\lambda_2$ is two-dimensional and spanned by $\left\{\varphi^1, \varphi^2\right\}$ where
\begin{align*}
   \varphi^1 &= (-1, -1, 0, 1, 1, 0, -1, -1, 0, 1, 1, 0, -1, -1, 0, 1, 1, 0) \\
   \varphi^2 &= (1, 0, -1, -1, 0, 1, 1, 0, -1, -1, 0, 1, 1, 0, -1, -1, 0, 1). 
\end{align*}
Theorem~\ref{thm:Cayley sufficiency} seeks an element $\varphi$ in the span of $\varphi^1,\varphi^2$ such that 
$$ \sum_{g \in G} \varphi(g) \varphi(g +1) =  \sum_{g \in G} \varphi(g) \varphi(g +5)$$
and a quick check shows that $\varphi = \varphi^1$ has that property. In all explicit examples that were tested, this sufficient condition also appeared to be necessary: conformal rigidity does seem to imply the existence of such a vector $\varphi$ and this was stated as an open problem in \cite{steinthomas}. In  Theorem~\ref{cor:generaleigenvectorcriterion} we provide a necessary and sufficient condition when allowing for complex-valued eigenvectors.

\subsection{Cayley Symmetrization.}
We now look at what the set $C(\Gamma,\lambda)$ looks like for a Cayley graph on an abelian group $\Gamma$. A crucial ingredient in the argument below is the special structure of eigenvectors of Cayley graphs (see, for example, Godsil \cite[\S 12, Lemma 9.2]{godsil}). In what follows denote by $\CC \mathcal{E}_{\lambda}$ the complex eigenspace of the Laplacian $L$ associated to $\lambda$, and for a complex eigenvector $\chi$ we denote
\begin{align} \label{gammaequation}
\chi_\Gamma:= \left(   \sum_{g \in \Gamma}  \chi(g) \overline{\chi(g \circ s) }\right)_{s \in S}
\end{align}
which strictly extends the definition in \eqref{eq:real eigvec symm} for real eigenvectors.

When the group $\Gamma$ is abelian, there is an explicit set of eigenvectors of the Laplacian of $\textup{Cay}(\Gamma,S)$ coming from the representation theory of $\Gamma$. A group homomorphism $\rho \,:\, \Gamma \rightarrow \textup{GL}(\CC)$ is a called a one-dimensional {\em representation} of $\Gamma$ and the vector $\chi_\rho = (\rho(g))_\Gamma$ is called the {\em character} of $\rho$. If $\Gamma$ is an abelian group, all representations of $\Gamma$ are one-dimensional and the characters of $\Gamma$ form an orthonormal basis of $\CC^{|\Gamma|}$, partitioned into bases for the complex eigenspaces of the Laplacian.

\begin{lemma} \label{lem:orthogonality} 
Suppose $\{\chi^1,\ldots,\chi^k\}$ is an orthonormal basis for $\CC\mathcal{E}_{\lambda}$ made of characters of $\Gamma$. Then, for $j \neq \ell$ and $s \in S$, we have
$$ \sum_{g \in \Gamma} \chi^j(g) \overline{\chi^{\ell}(g \circ s)} = 0.$$
\end{lemma}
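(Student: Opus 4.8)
The plan is to exploit the multiplicative (homomorphism) structure of the characters to pull the shift by $s$ out of the sum, thereby reducing the claim to the orthogonality of distinct characters, which is already built into the hypothesis.

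First I would record that each basis vector $\chi^\ell$ is a scalar multiple of a character, i.e. there is a one-dimensional representation $\rho_\ell : \Gamma \rightarrow \textup{GL}(\CC) = \CC^\ast$ and a normalizing constant $c_\ell$ with $\chi^\ell(g) = c_\ell\,\rho_\ell(g)$ for all $g \in \Gamma$. The crucial point --- and the only place the abelian hypothesis enters --- is that a one-dimensional representation is a group homomorphism into the multiplicative group $\CC^\ast$, so that
\[
\chi^\ell(g \circ s) = c_\ell\,\rho_\ell(g \circ s) = c_\ell\,\rho_\ell(g)\,\rho_\ell(s) = \chi^\ell(g)\,\rho_\ell(s).
\]

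Next I would substitute this identity into the sum. Taking complex conjugates and using that $\rho_\ell(s)$ is independent of the summation variable $g$, I can factor it out of the sum:
\[
\sum_{g \in \Gamma} \chi^j(g)\,\overline{\chi^\ell(g \circ s)}
= \overline{\rho_\ell(s)} \sum_{g \in \Gamma} \chi^j(g)\,\overline{\chi^\ell(g)}
= \overline{\rho_\ell(s)}\,\langle \chi^j, \chi^\ell\rangle.
\]
Finally, since $\{\chi^1,\dots,\chi^k\}$ is an orthonormal basis of $\CC\mathcal{E}_\lambda$ by hypothesis, $\langle \chi^j,\chi^\ell\rangle = 0$ whenever $j \neq \ell$, so the entire expression vanishes for every $s \in S$, as claimed.

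There is essentially no obstacle here beyond bookkeeping: the lemma is a direct consequence of the homomorphism property of one-dimensional characters together with character orthogonality. The one subtlety worth flagging is that the argument genuinely requires $\Gamma$ to be abelian --- it is precisely abelianness that guarantees all irreducible representations are one-dimensional, and hence that the $\chi^\ell$ are honest homomorphisms to $\CC^\ast$ admitting the clean factorization $\chi^\ell(g \circ s) = \chi^\ell(g)\,\rho_\ell(s)$. For a nonabelian group the relevant characters would be traces of higher-dimensional representations and this factorization would fail.
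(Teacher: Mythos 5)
Your proof is correct and follows the same route as the paper's: use the homomorphism property to factor $\overline{\chi^\ell(s)}$ out of the sum and then invoke orthogonality of distinct characters. The only difference is your explicit bookkeeping of normalizing constants, which the paper elides.
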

\begin{proof}
The distinct characters of an abelian group $\Gamma$ are pairwise orthogonal under the inner product 
$$ \langle \chi^j, \chi^\ell \rangle := \sum_{g \in \Gamma} \chi^j(g) \overline{\chi^\ell(g)}. $$
Using that characters  $\chi$ are group homomorphisms, and thus $\chi(gh) = \chi(g) \chi(h)$, we see that
any two characters $\chi^j \neq \chi^\ell$ satisfy
$$ \sum_{g \in \Gamma} \chi^j(g) \overline{\chi^\ell(g \circ s)} = \overline{\chi^\ell(s)}  \sum_{g \in \Gamma} \chi^j(g) \overline{\chi^\ell(g)} = 0.$$
\end{proof}

With this, we get the  structural result  that $C(\Gamma,\lambda)$ is a polytope. 

\begin{theorem} \label{thm:cayleymain}
Consider a Cayley graph $\textup{Cay}(\Gamma,S)$ with $\Gamma$ abelian. Let $\{\chi^1,\ldots,\chi^k\}$ be an  orthonormal character basis of $\CC\mathcal{E}_{\lambda}$. Then, with $\chi^j_\Gamma \in \mathbb{C}^{|S|}$ as in (\ref{gammaequation}),
\begin{align*}
C(\Gamma,\lambda) &= \RR^{|S|} \cap \textup{conv}\left\{ \chi^j_\Gamma \, : \, j=1,\ldots,k \right\}\\
&=\RR^{|S|} \cap\{\varphi_\Gamma \, : \, \varphi \in {\CC}\mathcal{E}_{\lambda}, \|\varphi\|=1\}.
\end{align*}
\end{theorem}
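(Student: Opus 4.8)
The plan is to reduce the whole statement to one bilinear computation that diagonalizes the symmetrization map $\varphi \mapsto \varphi_\Gamma$ in the character basis, and then to bridge the gap between the complex and the real convex hulls via a real/imaginary decomposition.

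First I would establish the central identity. Writing an arbitrary $\varphi \in \CC\mathcal{E}_\lambda$ as $\varphi = \sum_{j=1}^k c_j \chi^j$ with $c_j \in \CC$, I expand the $s$-th coordinate of $\varphi_\Gamma$ from \eqref{gammaequation} as
\[
(\varphi_\Gamma)_s = \sum_{j,\ell} c_j \overline{c_\ell} \sum_{g \in \Gamma} \chi^j(g)\,\overline{\chi^\ell(g \circ s)}.
\]
By Lemma~\ref{lem:orthogonality} every cross term with $j \neq \ell$ vanishes, leaving $\varphi_\Gamma = \sum_j |c_j|^2\, \chi^j_\Gamma$. Since the characters are orthonormal, $\|\varphi\|^2 = \sum_j |c_j|^2$, so the constraint $\|\varphi\|=1$ reads $\sum_j |c_j|^2 = 1$. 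As $\varphi$ ranges over the complex unit sphere of $\CC\mathcal{E}_\lambda$, the weights $(|c_j|^2)_j$ sweep out the entire probability simplex (take $c_j = \sqrt{t_j}$), which gives
\[
\{\varphi_\Gamma : \varphi \in \CC\mathcal{E}_\lambda,\ \|\varphi\|=1\} = \textup{conv}\{\chi^j_\Gamma : j=1,\ldots,k\}.
\]
Intersecting both sides with $\RR^{|S|}$ yields the second equality of the theorem immediately.

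Next I would prove $C(\Gamma,\lambda) = \RR^{|S|} \cap \textup{conv}\{\chi^j_\Gamma\}$. For the inclusion $\subseteq$, a real unit eigenvector $\varphi \in \mathcal{E}_\lambda$ produces a real vector $\varphi_\Gamma$ (the formula \eqref{gammaequation} reduces to \eqref{eq:real eigvec symm}), and since $\mathcal{E}_\lambda \subset \CC\mathcal{E}_\lambda$ the identity above places $\varphi_\Gamma$ inside $\textup{conv}\{\chi^j_\Gamma\}$; as $\RR^{|S|}\cap\textup{conv}\{\chi^j_\Gamma\}$ is convex, forming the convex hull of all such $\varphi_\Gamma$ keeps us inside it.

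The reverse inclusion $\supseteq$ is where the real-versus-complex distinction actually bites, and I expect it to be the main obstacle: the identity lets me write any $x \in \RR^{|S|}\cap\textup{conv}\{\chi^j_\Gamma\}$ as $x = \varphi_\Gamma$ for some \emph{complex} unit $\varphi$, but I must realize $x$ as a convex combination of symmetrizations of genuine \emph{real} unit eigenvectors. The device is to split $\varphi = \alpha + i\beta$ into real and imaginary parts; since $L$ is real symmetric, both $\alpha,\beta \in \mathcal{E}_\lambda$. Expanding $\varphi(g)\overline{\varphi(g\circ s)}$ shows that $\textup{Re}(\varphi_\Gamma) = \alpha_\Gamma + \beta_\Gamma$, and because $x = \varphi_\Gamma$ is already real, $x = \alpha_\Gamma + \beta_\Gamma$. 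Normalizing $\hat\alpha = \alpha/\|\alpha\|$ and $\hat\beta = \beta/\|\beta\|$ (treating the degenerate cases $\alpha=0$ or $\beta=0$ separately), the quadratic scaling of the symmetrization map gives $x = \|\alpha\|^2\,\hat\alpha_\Gamma + \|\beta\|^2\,\hat\beta_\Gamma$, a genuine convex combination since $\|\alpha\|^2 + \|\beta\|^2 = \|\varphi\|^2 = 1$. Hence $x \in C(\Gamma,\lambda)$, which closes the argument.
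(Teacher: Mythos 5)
Your proposal is correct and follows essentially the same route as the paper: Lemma~\ref{lem:orthogonality} kills the cross terms to give $\varphi_\Gamma=\sum_j|c_j|^2\chi^j_\Gamma$, and the reverse inclusion is handled by the same real/imaginary splitting $\varphi=\alpha+i\beta$ with $\|\alpha\|^2+\|\beta\|^2=1$ exhibiting a real $x=\varphi_\Gamma$ as a convex combination of $\hat\alpha_\Gamma$ and $\hat\beta_\Gamma$. The only cosmetic difference is that you prove the unrestricted identity $\{\varphi_\Gamma:\varphi\in\CC\mathcal{E}_\lambda,\ \|\varphi\|=1\}=\textup{conv}\{\chi^j_\Gamma\}$ up front instead of the paper's cyclic chain of three inclusions, which is a clean reorganization but not a new argument.
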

\begin{proof}
Take any (real) $\varphi \in \mathcal{E}_{\lambda}$, $\|\varphi\|=1$. We can write $ \varphi = a_1 \chi^1 + \dots + a_k \chi^k$ for some complex numbers $a_i$. Since $\|\varphi\|=1$, $\sum_{j=1}^k |a_j|^2 = 1$.
By Lemma~\ref{lem:orthogonality}, we have
\begin{align*}
 \sum_{g \in \Gamma}\varphi(g)  \overline{\varphi(g \circ s)} &= \sum_{g \in \Gamma} \left( \sum_{j=1}^{k} a_j \chi^j(g) \right)   \left( \sum_{\ell=1}^{k} \overline{a_{\ell}} \overline{ \chi^{\ell}(g \circ s)} \right) \\
&= \sum_{j,\ell=1}^{k} a_j \overline{a_{\ell}} \sum_{g \in \Gamma} \chi^j(g) \overline{\chi^{\ell}(g \circ s)} = \sum_{j=1}^{k} |a_j|^2 \sum_{g \in \Gamma} \chi^j(g) \overline{\chi^{j}(g \circ s)},
\end{align*}
where the sum $\sum_{g \in \Gamma} \chi^j(g) \overline{\chi^{j}(g \circ s)}$ only depends on $j$ and $s$. We arrive at
$$\varphi_\Gamma := \left(   \sum_{g \in \Gamma}  \varphi(g) \varphi(g \circ s) \right)_{s \in S} = \sum_{j=1}^{k} |a_j|^2 \chi_\Gamma^j \in \mathbb{C}^{|S|}.$$
This means that $\varphi_\Gamma$ is a convex combination of the $\dim \mathcal{E}_{\lambda}$-many complex vectors $\chi^j_\Gamma$ in $\mathbb{C}^{|S|}$. Therefore, 
$C(\Gamma,\lambda)$ is contained in $\textup{conv} \{\chi_\Gamma^j, j=1, \ldots, k\} \cap \RR^S,$
and the first set is contained in the second:
$$ C(\Gamma,\lambda) \subseteq \RR^{|S|} \cap \textup{conv}\left\{ \chi^j_\Gamma \, : \, j=1,\ldots,k \right\}.$$

We now show that the second set is included in the third. If $x$ is a real vector that is a convex combination of the $\chi^j_\Gamma$ then we can write it as $$x=\sum_{j=1}^k a_j^2 \chi^j_\Gamma$$ for some real numbers $a_j$, and following the equalities in the reverse order we conclude $x=\varphi_\Gamma$ where $\varphi= \sum_{j=1}^k a_j \chi^j$. 

It remains to prove that the third set is contained in the first set. Let $\varphi \in \mathbb{C} \mathcal{E}_{\lambda}$ be such that $\|\varphi\|=1$ and $\varphi_\Gamma$ is real. We decompose $\varphi$ into  its real and imaginary parts using two scaled real unit vectors. More precisely, there exists real vectors $\varphi^1, \varphi^2$, both normalized in $\ell^2$, such that
$$ \varphi = \mbox{Re}(\varphi) + i \cdot \mbox{Im}(\varphi) = a_1 \varphi^1 +  i a_2 \varphi^2.$$
 Note that $a_1^2+a_2^2=\|\varphi\|^2=1$ and $\varphi^i \in \mathcal{E}_{\lambda}$. Then, since $\varphi_\Gamma$ is real
\begin{align*}
\varphi_\Gamma &= \mbox{Re} \left(\varphi_\Gamma \right)= \textup{Re}\left(\sum_{g \in \Gamma} \varphi(g) \overline{\varphi(g \circ s)}\right) \\
& =  \sum_{g\in \Gamma} a_1^2\varphi^1(g \circ s) {\varphi^1(g)} + a_2^2\varphi^2(g\circ  s) {\varphi^2(g)} \\
&=   a_1^2 \left( \sum_{g\in \Gamma} \varphi^1(g \circ s) {\varphi^1(g)} \right) + a_2^2 \left(\sum_{g\in \Gamma} \varphi^2(g \circ s) {\varphi^2(g)} \right).
 \end{align*}
Since $a_1^2 + a_2^2 = 1$, this shows that $\varphi_\Gamma$ is a convex combination of $\varphi^1_\Gamma$ and $\varphi^2_\Gamma$ and so it belongs to $C(\Gamma,\lambda)$.
\end{proof}

The first consequence of Theorem~\ref{thm:cayleymain} is that for Cayley graphs on abelian groups, the sufficient condition for conformal rigidity in \cite{steinthomas} is also necessary, if we allow for complex eigenvectors.

\begin{theorem}[Necessary and Sufficient Cayley Criterion]
   \label{thm:necc and suff condn cayley}
    Let $\Gamma$ be a finite abelian group and let $\textup{Cay}(\Gamma,S)$ be a Cayley graph (with $-S = S$). Then $\textup{Cay}(\Gamma,S)$ is lower conformally rigid if and only if there exists a complex eigenvector $\varphi$ corresponding to $\lambda_2$ such that 
    \begin{align} \label{eq:Cayley sum property2}
     \sum_{g \in \Gamma}  \varphi(g) \overline{\varphi(g \circ s)} \quad \mbox{is real, and independent of}~s.
    \end{align}
    The analogous statement for upper conformal rigidity also holds.
\end{theorem}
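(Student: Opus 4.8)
The plan is to derive this theorem essentially as an immediate consequence of the two preceding results: the geometric characterization of lower conformal rigidity for vertex-transitive graphs in Corollary~\ref{cor:cr_eq_formulation}, together with the explicit description of $C(\Gamma,\lambda)$ for abelian Cayley graphs in Theorem~\ref{thm:cayleymain}. Since $\textup{Cay}(\Gamma,S)$ is vertex-transitive with respect to the regular action of $\Gamma$, Corollary~\ref{cor:cr_eq_formulation} applies verbatim: $\textup{Cay}(\Gamma,S)$ is lower conformally rigid if and only if the line $t\mathbf{1}$ meets $C(\Gamma,\lambda_2)$, i.e. if and only if $t\mathbf{1} \in C(\Gamma,\lambda_2)$ for some real $t$. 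The positivity of the resulting common edge length is already built into this criterion (it is guaranteed by $\lambda_2 > 0$, as in Theorem~\ref{thm:eigevec certificate}), so no separate condition on $t$ will be needed.

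First I would substitute the third description of $C(\Gamma,\lambda_2)$ furnished by Theorem~\ref{thm:cayleymain}, namely $C(\Gamma,\lambda_2) = \RR^{|S|}\cap\{\varphi_\Gamma : \varphi\in\CC\mathcal{E}_{\lambda_2},\ \|\varphi\|=1\}$, where $\varphi_\Gamma = (\sum_{g\in\Gamma}\varphi(g)\overline{\varphi(g\circ s)})_{s\in S}$ is the complex symmetrization vector from (\ref{gammaequation}). The condition ``$t\mathbf{1}\in C(\Gamma,\lambda_2)$ for some $t\in\RR$'' then unpacks directly into: there is a unit-norm complex eigenvector $\varphi\in\CC\mathcal{E}_{\lambda_2}$ whose vector $\varphi_\Gamma$ equals $t\mathbf{1}$. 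Read componentwise, $\varphi_\Gamma = t\mathbf{1}$ says precisely that each entry $\sum_{g\in\Gamma}\varphi(g)\overline{\varphi(g\circ s)}$ equals the common real number $t$, i.e. that this quantity is real and independent of $s$. This is exactly condition (\ref{eq:Cayley sum property2}), and the equivalence follows in both directions.

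Two bookkeeping points are needed to make the translation airtight. First, the normalization $\|\varphi\|=1$ appearing in Theorem~\ref{thm:cayleymain} is immaterial to the statement: replacing $\varphi$ by $\varphi/\|\varphi\|$ scales $\varphi_\Gamma$ by the positive real $\|\varphi\|^{-2}$, which preserves both realness and proportionality to $\mathbf{1}$, so I may phrase the criterion for an unnormalized eigenvector. Second, I would note that for Cayley graphs the edge orbits under $\Gamma$ are indexed by $S$ (all edges labeled $s$ lie in one $\Gamma$-orbit, as observed before (\ref{eq:real eigvec symm})), so the $S$-indexed vector $\varphi_\Gamma$ is the specialization of the orbit-indexed vector of Definition~\ref{def:phiH} and the set $C(\Gamma,\lambda_2)\subset\RR^{|S|}$ of Theorem~\ref{thm:cayleymain} is the one appearing in Corollary~\ref{cor:cr_eq_formulation}; moreover realness of $\varphi_\Gamma$ automatically forces the $s$ and $s^{-1}$ entries to agree, consistent with the undirected edge structure. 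The upper conformal rigidity statement is obtained by the identical argument with $\lambda_2$ replaced by $\lambda_n$ throughout.

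Because all of the analytic and representation-theoretic substance has already been absorbed into Theorem~\ref{thm:cayleymain}---in particular the character basis of $\CC\mathcal{E}_\lambda$ and the orthogonality Lemma~\ref{lem:orthogonality} that together show $\varphi_\Gamma$ is a convex combination of the $\chi^j_\Gamma$---the present theorem carries no genuine obstacle beyond this clean unpacking. The only point requiring care is the logical direction: I must invoke the full equality $C(\Gamma,\lambda_2) = \RR^{|S|}\cap\{\varphi_\Gamma : \varphi\in\CC\mathcal{E}_{\lambda_2},\ \|\varphi\|=1\}$, not merely one inclusion, so that a real multiple of $\mathbf{1}$ lying in $C(\Gamma,\lambda_2)$ genuinely corresponds to an achievable complex eigenvector and conversely.
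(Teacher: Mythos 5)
Your proposal is correct and matches the paper's approach: the paper states this theorem as an immediate consequence of Theorem~\ref{thm:cayleymain} combined with the vertex-transitive criterion of Corollary~\ref{cor:cr_eq_formulation}, giving no separate proof, and your unpacking (including the normalization and the $S$-versus-edge-orbit bookkeeping) is exactly the intended argument.
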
 

It is a natural question whether one may  assume in Theorem~\ref{thm:necc and suff condn cayley} that $\varphi$ is a real eigenvector (as opposed to a complex-valued eigenvector/character).  If so, then Theorem~\ref{thm:necc and suff condn cayley} would be a full converse to \cite[Theorem 2.3]{steinthomas} in the abelian case. These conditions do not yet \textit{explain} when a Cayley graph is conformally rigid. The paper \cite{steinthomas} considers rather simple circulant graphs on two generators; whether or not these Cayley graphs are conformally rigid remains an open problem.

A second consequence of Theorem~\ref{thm:cayleymain} is that for Cayley graphs on abelian groups, if we know the characters of the underlying group, we can check conformal rigidity by simply solving a linear program. This gives us a very efficient and numerically robust way of certifying conformal rigidity in these cases. 
The property that $C(\Gamma,S)$ becomes a polytope when $\Gamma$ is abelian is similar to results in the literature where an optimization problem that is invariant under the action of a (abelian) group will {\em symmetry reduce} to a linear program. For example, see \cite{vallentincayley} for a very closely related setup where they use the full character basis of $\CC^{|\Gamma|}$ to derive a linear program for the stability number problem on $\textup{Cay}(\Gamma,S)$, while in our case, we are picking a character basis of a given eigenspace to produce a polytope/linear program. 
The famous Delsarte linear programming bound on the size of Boolean codes is another instance of a similar symmetry-reduction to a linear program. 
 
 \subsection{A Hyperplane Perspective.} We quickly discuss a slightly different perspective motivated by the {\em calculus of variations}.  We may think of the Graph Laplacian
 $$ L(w) = D(w) - A(w)$$
as a matrix indexed by the weights. Correspondingly, for $\varphi \in \mathcal{E}_{\lambda_2}$ normalized to $\| \varphi\| = 1$, we have
$$ \sum_{(i,j) \in E}  w_{ij} (\varphi(i) - \varphi(j))^2 =  \sum_{s \in S} \sum_{g \in \Gamma} w_{g,s} (\varphi(g \circ s) - \varphi(g))^2 = \lambda_2.$$
A basic symmetrization argument (already carried out in \cite{steinthomas}) shows that
the weight $w_{g,s}$ can be assumed to only depend on  $w_{g,s} =: w_s$. The question is now: can the weights be changed in such a way that this particular expression increases for \textit{every} $\varphi \in \mathcal{E}_{\lambda_2}$ simultaneously. Expanding the square and changing variables, we see that
\begin{align*}
    \sum_{s \in S} \sum_{g \in \Gamma} w_{s} (\varphi(g \circ s) - \varphi(g))^2 &=   \sum_{s \in S} \sum_{g \in \Gamma} w_{s} \left( \varphi(g \circ s)^2 + \varphi(g)^2 - 2\varphi(g)\varphi(g \circ s) \right) \\
    &= 2 \sum_{s \in S} w_s - 2\sum_{s \in S} w_s \sum_{g \in \Gamma} \varphi(g) \varphi(g \circ s)
\end{align*}
which leads us naturally to $\varphi_{\Gamma}$. The first sum is invariant (since we keep the total weight the same). The second sum can be seen as an inner product:
$$\sum_{s \in S} w_s \sum_{g \in \Gamma} \varphi(g) \varphi(g \circ s) = \left\langle w, \varphi_\Gamma \right\rangle.$$
The question is now whether, by changing the weights $w$, the quantity $\left\langle w, \phi_\Gamma \right\rangle$ can be decreased (since that would increase $\lambda_2$). Note that things are a little bit more complicated because the eigenvectors $\varphi \in \mathcal{E}_{\lambda_2}$ also depend on the weight $w$. Moreover, the quantity should decrease not only for a single eigenvector $\varphi$ but it should do so \textit{uniformly} for the entire eigenspace $\mathcal{E}_{\lambda_2}$ while simultaneously preserving orthogonality to $\mathbf{1}$ (as the total weight is unchanged). This naturally leads to the problem of understanding how the unit ball in the eigenspace $\mathcal{E}_{\lambda_2}$ behaves when mapped using the expression $\phi_G = \sum_{g \in \Gamma} \varphi(g) \varphi(g \circ s)$ and whether this intersects the one-dimensional eigenspace spanned by $\mathbf{1}$. If it does not, then not only is the graph \textit{not} conformally rigid but the hyperplane separation theorem will propose a direction of improvement: a way to change the weights so that $\lambda_2$ increases.

\subsection{A Family of Circulants}
In this section we prove the conformal rigidity of an infinite class of graphs using Theorem \ref{thm:cayleymain} and Theorem
\ref{thm:embeddingcharacterization}. We do not have an alternate proof strategy for this result.

We will focus on the case of abelian Cayley graphs, more concretely, on circulant graphs $G=\textup{Cay}(\mathbb{Z}_N,S)$. The characters of $\mathbb{Z}_N$ are of the form $\chi^k(i)=\omega^{ki}$ where $\omega$ is a primitive $N$-th root of unity, and these are therefore the complex eigenvectors of $G$. The corresponding eigenvalues are  
\[
\lambda_k = 2\sum_{j \in S} \left( 1 - \textup{Re}(\omega^{kj}) \right)= 2\sum_{j \in S} \left( 1 - \cos\left(\frac{2\pi k j}{n}\right) \right), \quad k = 0,1,\dots,N-1.
\]
We start by noting a simple corollary of our previous results.

\begin{corollary}
Let $\Gamma$ be an abelian group and $G=\textup{Cay}(\Gamma,S)$. For some character $\chi$ of $\Gamma$, if we have 
that $(\textup{Re}(\chi(s)))_{s \in S}$ is constant, then $G$ has an edge-isometric embedding on $\mathcal{E}_{\lambda}$, where $\lambda$ is the eigenvalue associated to $\chi$.
\end{corollary}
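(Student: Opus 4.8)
The plan is to invoke Theorem~\ref{thm:embeddingcharacterization}, which, since a Cayley graph is vertex-transitive with respect to $\Psi = \Gamma$, reduces the existence of an edge-isometric embedding on $\mathcal{E}_\lambda$ to showing that the line $t\mathbf{1}$ meets the convex set $C(\Gamma,\lambda)$. By Theorem~\ref{thm:cayleymain}, for abelian $\Gamma$ this set equals $\RR^{|S|}\cap \textup{conv}\{\chi^j_\Gamma\}$ taken over an orthonormal character basis $\{\chi^j\}$ of $\CC\mathcal{E}_\lambda$, so it suffices to exhibit a real multiple of $\mathbf{1}$ inside $\textup{conv}\{\chi^j_\Gamma\}$. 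First I would record the shape of $\chi_\Gamma$ for a single normalized character: writing $\hat\chi = \chi/\sqrt{|\Gamma|}$ for the unit eigenvector attached to $\chi$ and using $\chi(g\circ s)=\chi(g)\chi(s)$ together with $|\chi(g)|=1$, the defining expression \eqref{gammaequation} collapses to $\hat\chi_\Gamma = (\overline{\chi(s)})_{s\in S}$.

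The key observation is then to pair $\chi$ with its complex conjugate character $\overline{\chi}$. Since $\textup{Re}(\overline{\chi}(s)) = \textup{Re}(\chi(s))$ for every $s$, the eigenvalue attached to $\overline{\chi}$ is again $\lambda$, so $\overline{\chi}$ also lies in $\CC\mathcal{E}_\lambda$ and the same computation gives $\widehat{\overline{\chi}}_\Gamma = (\chi(s))_{s\in S}$. Averaging the two,
\[
\tfrac12\,\hat\chi_\Gamma + \tfrac12\,\widehat{\overline{\chi}}_\Gamma = \left(\tfrac12(\overline{\chi(s)}+\chi(s))\right)_{s\in S} = (\textup{Re}(\chi(s)))_{s\in S},
\]
which by hypothesis is the constant vector $c\mathbf{1}$. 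This is a genuine convex combination of two elements of a character basis of $\CC\mathcal{E}_\lambda$, it is real, and it is a multiple of $\mathbf{1}$; hence $c\mathbf{1}\in C(\Gamma,\lambda)$ by Theorem~\ref{thm:cayleymain}, the line $t\mathbf{1}$ meets $C(\Gamma,\lambda)$, and Theorem~\ref{thm:embeddingcharacterization} delivers the edge-isometric embedding. Equivalently, one may take the real eigenvector $\varphi = \sqrt{2}\,\textup{Re}(\hat\chi)$ and apply Theorem~\ref{thm:eigevec certificate} directly, since the proof of Theorem~\ref{thm:cayleymain} shows that $\varphi_\Gamma$ equals this same average.

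The points that need care, rather than a genuine obstacle, are bookkeeping ones. The quantity $\hat\chi_\Gamma$ is itself complex, so one cannot simply feed a single character into $C(\Gamma,\lambda)\subset\RR^{|S|}$; the conjugation symmetry $S=-S$, which is exactly what forces $\overline{\chi}$ into the same eigenspace, is what makes the symmetric part real. I would also dispatch the degenerate situations: if $\chi$ is real-valued then $\overline{\chi}=\chi$ and $\hat\chi_\Gamma = (\chi(s))_{s\in S} = c\mathbf{1}$ already, while if $\chi$ is the trivial character then $\lambda = 0$ and there is nothing to prove, so we may assume $\chi$ nontrivial. Connectedness forces $S$ to generate $\Gamma$, whence $\lambda = \sum_{s\in S}(1-\textup{Re}(\chi(s))) > 0$, so the hypothesis $\lambda>0$ of Theorems~\ref{thm:cayleymain} and~\ref{thm:embeddingcharacterization} is satisfied.
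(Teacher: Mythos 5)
Your proposal is correct and takes essentially the same route as the paper, whose entire proof is the observation that $\tfrac12\chi_\Gamma+\tfrac12\overline{\chi}_\Gamma$ has $s$-coordinate $\textup{Re}(\chi(s))$, combined with Theorems~\ref{thm:cayleymain} and~\ref{thm:embeddingcharacterization}. You merely supply the routine details the paper leaves implicit (that $S=-S$ places $\overline{\chi}$ in the same eigenspace, the normalization, and the degenerate cases).
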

\begin{proof}
Just note that $\frac{1}{2}\chi_\Gamma+\frac{1}{2}\overline{\chi}_\Gamma$ has coordinate $s$ equal to $\textup{Re}(\chi(s))$, and so by Theorems \ref{thm:embeddingcharacterization} and \ref{thm:cayleymain} we have the result.
\end{proof}

In our case, if we fix the character $\chi^k$, we can guarantee that $G$ has an edge-isometric embedding on $\mathcal{E}_{\lambda_k}$ if $\textup{Re}(\omega^{ks})$ is constant over $s \in S$.

\begin{corollary} \label{cor:circulantcriterion}
Let $m,n>1$, and let $G=\textup{Cay}(\mathbb{Z}_{mn},\{1,n-1\})$. Then $G$ has an edge-isometric embedding on $\mathcal{E}_{\lambda_{am}}$ for any positive integer $a$.
\end{corollary}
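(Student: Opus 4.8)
The plan is to apply the preceding corollary with the single character $\chi = \chi^{am}$, so that the whole statement collapses to a short root-of-unity computation. Recall that on $\mathbb{Z}_{mn}$ we have $\omega = e^{2\pi i/(mn)}$, and that since we require $S$ to be symmetric the effective generating set is $S = \{\pm 1, \pm(n-1)\}$. Because $\chi^k(-s) = \overline{\chi^k(s)}$, the real parts satisfy $\textup{Re}(\omega^{-ks}) = \textup{Re}(\omega^{ks})$, so the vector $(\textup{Re}(\omega^{ks}))_{s \in S}$ is constant exactly when $\textup{Re}(\omega^{k}) = \textup{Re}(\omega^{k(n-1)})$. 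This single scalar identity, for $k = am$, is all I need to check.

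For the two relevant generators I would compute directly. Using $\omega^{m} = e^{2\pi i/n}$, I get $\omega^{am} = e^{2\pi i a/n}$, whose real part is $\cos(2\pi a/n)$. For the second generator, $\omega^{am(n-1)} = \omega^{amn}\,\omega^{-am} = e^{2\pi i a}\, e^{-2\pi i a/n} = e^{-2\pi i a/n}$, since $e^{2\pi i a} = 1$ for integer $a$; its real part is $\cos(-2\pi a/n) = \cos(2\pi a/n)$. Hence $\textup{Re}(\omega^{am}) = \textup{Re}(\omega^{am(n-1)}) = \cos(2\pi a/n)$, so $(\textup{Re}(\omega^{ams}))_{s \in S}$ is constant. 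The preceding corollary (equivalently, Theorems~\ref{thm:embeddingcharacterization} and~\ref{thm:cayleymain}, via the observation that $\tfrac12 \chi_\Gamma + \tfrac12 \overline{\chi}_\Gamma$ then lands on the line $t\mathbf{1}$) immediately produces an edge-isometric embedding on $\mathcal{E}_{\lambda_{am}}$.

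There is essentially no hard step here; the one point to watch is that the two generator contributions genuinely coincide, which is precisely the arithmetic fact that $n-1 \equiv -1 \pmod n$ once the factor $\omega^{amn}$ is absorbed as $1$. The only caveat worth recording is the degenerate case $n \mid a$: then $am \equiv 0 \pmod{mn}$, the character $\chi^{am}$ is trivial, and $\lambda_{am} = 0$, so $\mathcal{E}_{\lambda_{am}}$ is spanned by $\mathbf{1}$ and admits no edge-isometric embedding by Definition~\ref{def:edge iso embedding}. Accordingly the statement should be read for those $a$ with $n \nmid a$ (equivalently $\lambda_{am} > 0$), and for every such $a$ the computation above applies verbatim.
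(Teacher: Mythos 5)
Your proof is correct and is essentially identical to the paper's one-line argument: both reduce the claim to the observation that $\omega^{am(n-1)}=\omega^{-am}$ has the same real part as $\omega^{am}$, and then invoke the preceding corollary. Your added caveat about the degenerate case $n\mid a$ (where $\lambda_{am}=0$) is a reasonable precision that the paper's statement leaves implicit.
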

\begin{proof}
Just note that $\omega^{am}$ and $\omega^{am(n-1)}=\omega^{-am}$ have the same real part.    
\end{proof}

We can use this result to show conformal rigidity of a large class of graphs. The only problem is to find the smallest and largest  of the nonzero eigenvalues $\lambda_k$. In what follows we will find them for $\mathbb{Z}_{3n}$ but the same idea works more generally. 

\begin{proposition} \label{prop:infinite circulants}
The circulant graphs $\textup{Cay}(\mathbb{Z}_{3n},\{1,n-1\})$, with $n \geq 6$, are conformally rigid.
\end{proposition}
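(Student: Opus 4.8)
The plan is to read off conformal rigidity directly from Corollary~\ref{cor:circulantcriterion} together with the embedding certificate of Proposition~\ref{prop:embedding certificates}. Taking $m=3$ in Corollary~\ref{cor:circulantcriterion}, the graph $G=\textup{Cay}(\mathbb{Z}_{3n},\{1,n-1\})$ has an edge-isometric embedding on $\mathcal{E}_{\lambda_{3a}}$ for \emph{every} positive integer $a$, that is, on every eigenspace whose index is a multiple of $3$. By Proposition~\ref{prop:embedding certificates}, it therefore suffices to prove that both the algebraic connectivity $\lambda_2$ and the largest eigenvalue $\lambda_n$ lie in the set $\{\lambda_{3a}\}$; equivalently, that the smallest nonzero value and the largest value of $k\mapsto\lambda_k$ are attained at some index $k\equiv 0\pmod 3$. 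Note that no strict separation between residue classes is needed: as soon as $\lambda_2=\lambda_{3a}$ for some $a$, the embedding that Corollary~\ref{cor:circulantcriterion} furnishes lives on the \emph{entire} eigenspace $\mathcal{E}_{\lambda_2}$, and likewise for $\lambda_n$.

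The main computation is to organize the eigenvalues by the residue of $k$ modulo $3$. Setting $\theta=2\pi k/(3n)$ and using $\theta(n-1)=\tfrac{2\pi k}{3}-\theta$, the eigenvalue formula becomes $\lambda_k=4-2\cos\theta-2\cos(\tfrac{2\pi k}{3}-\theta)$. For $k\equiv0\pmod 3$ the two cosines coincide, giving the clean expression $\lambda_{3a}=4\bigl(1-\cos\tfrac{2\pi a}{n}\bigr)$, whose values fill out $[0,8]$ and whose smallest positive value is $\lambda_3=4(1-\cos\tfrac{2\pi}{n})$. For $k\not\equiv0\pmod 3$ a sum-to-product identity collapses the two cosines into one: one finds $\lambda_k=4-2\cos(\theta-\tfrac{\pi}{3})$ when $k\equiv1$ and $\lambda_k=4+2\cos(\theta-\tfrac{2\pi}{3})$ when $k\equiv2$. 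In either residue class this pins the eigenvalue to the band $\lambda_k\in[2,6]$.

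The two extremal claims now follow by comparing these ranges, and this is exactly where the hypothesis $n\ge 6$ enters. For the minimum, $\lambda_3=4(1-\cos\tfrac{2\pi}{n})\le 2$ holds precisely when $n\ge 6$, while every $k\not\equiv0\pmod 3$ satisfies $\lambda_k\ge 2$ and every nonzero multiple of $3$ satisfies $\lambda_{3a}\ge\lambda_3$ because $\cos(2\pi a/n)$ is maximal at $a=1$; hence $\lambda_2=\lambda_3=\lambda_{3\cdot1}$. For the maximum, the multiples of $3$ attain $8$ when $n$ is even (at $a=n/2$) and $4(1+\cos\tfrac{\pi}{n})$ when $n$ is odd (at $a=(n\pm1)/2$), both of which exceed $6$ for $n\ge 6$, whereas every $k\not\equiv0$ gives $\lambda_k\le 6$; hence $\lambda_n=\lambda_{3a^\ast}$ for the maximizing $a^\ast$. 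Therefore $\lambda_2,\lambda_n\in\{\lambda_{3a}\}$, and the corresponding edge-isometric embeddings certify lower and upper conformal rigidity through Proposition~\ref{prop:embedding certificates}.

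I expect the band estimate for the non-multiples of $3$ to be the crux of the argument: the sum-to-product simplification is what confines those eigenvalues to $[2,6]$ and thereby cleanly separates them from the extremes produced by the multiples of $3$. The remaining delicate point is purely bookkeeping, namely the even/odd case split needed to locate the maximizing index and to confirm that $a=n/2$ or $(n\pm1)/2$ yields a legitimate index $k=3a<3n$. The boundary value $n=6$, where $\lambda_3=2$ touches the lower edge of the band, requires no special handling thanks to the observation above that containment $\lambda_2\in\{\lambda_{3a}\}$, rather than strict minimality among residue classes, already suffices.
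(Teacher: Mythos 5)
Your proposal is correct and follows essentially the same route as the paper: invoke Corollary~\ref{cor:circulantcriterion} with $m=3$, split the eigenvalues $\lambda_k$ by the residue of $k$ modulo $3$, confine the non-multiples of $3$ to the band $[2,6]$ via a sum-to-product identity (your cosine forms are equivalent to the paper's sine forms), and check that the extreme eigenvalues are attained at multiples of $3$. Your treatment is in fact slightly more explicit than the paper's, which only says ``for sufficiently large $n$'' where you pin down $n\ge 6$ as the exact threshold coming from $\lambda_3\le 2$.
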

\begin{proof}
Examining the sequence of $\lambda_k$ for a large $n$ (Figure \ref{fig:eigenvalues}) we immediately understand what is happening. The eigenvalues align in three different curves, depending on their congruence class modulo $3$.
\begin{figure}
    \centering
    \includegraphics[width=7cm]{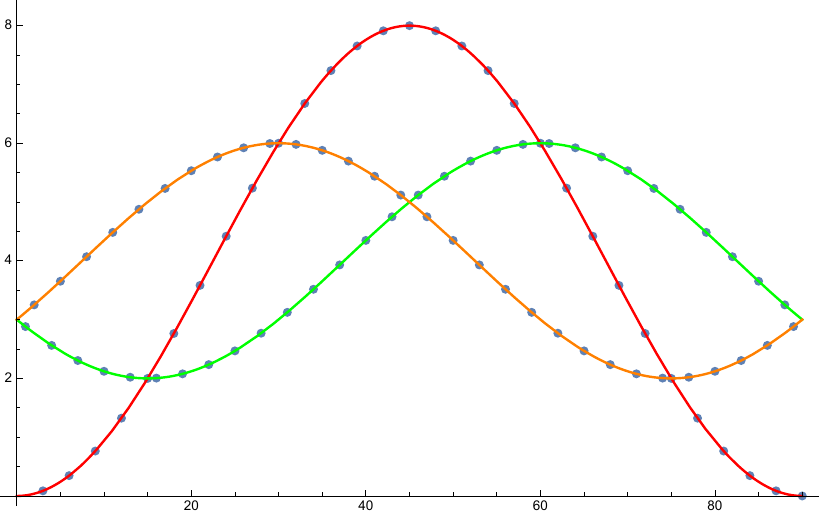}
    \caption{Eigenvalues $\lambda_k$ of $\textup{Cay}(\mathbb{Z}_{90},\{1,29\})$}
    \label{fig:eigenvalues}
\end{figure}
For $k$ a multiple of $3$ we get $\lambda_k=4 - 4\cos(2\pi k/(3n))$, for $k \cong 1 \mod 3$ we get $\lambda_k=4 - 2\sin(2\pi k/(3n) + \pi/6)$ and for  $k \cong -1 \mod 3$ we get $\lambda_k=4 + 2\sin(2\pi k/(3n) - \pi/6)$. This can be verified with standard trigonometric identities.
This implies that for $k$ not divisible by $3$, we have $2 \leq \lambda_k \leq 6$. For sufficiently large $n$ the minimum and maximum will then be attained by a multiple of $3$. The smallest value will be attained at $k=3$ and the largest at the middle value, $k=3\lfloor n/2 \rfloor.$ From Corollary \ref{cor:circulantcriterion} the result follows.
\end{proof}

\begin{example}
For odd $n$ we can see that the above construction always gives two embeddings from $G=\textup{Cay}(\mathbb{Z}_{3n},\{1,n-1\})$ to the vertices of a regular $n$-gon. It corresponds to a $3$ to $1$ map that rolls the $3n$-cycle in $G$ three times around an $n$-cycle. In Figure \ref{fig:embedding} we can see the color-coded embeddings for $n=7$.

\begin{figure}
    \centering
    \includegraphics[width=3.5cm]{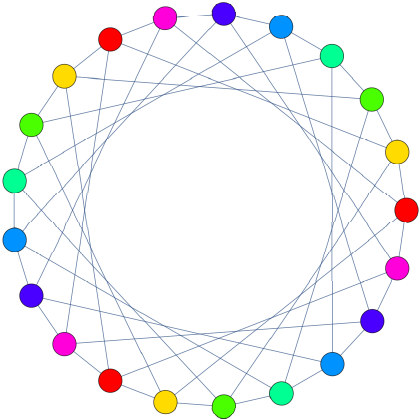} \hfill
    \includegraphics[width=2.7cm]{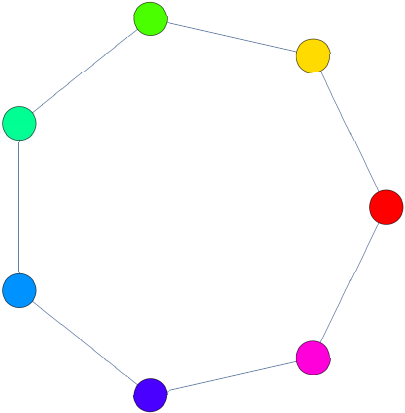} \hfill
    \includegraphics[width=2.7cm]{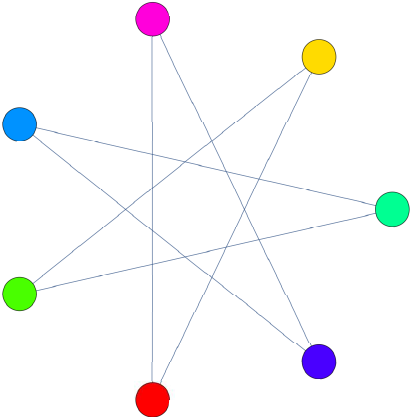}
    \caption{The graph $\textup{Cay}(\mathbb{Z}_{21},\{1,6\})$ and its two edge-isometric spectral embeddings to $\lambda_2$ and $\lambda_{\max}$.}
    \label{fig:embedding}
\end{figure}
\end{example}

It remains to be shown that these circulants  are not all trivially conformally rigid for other reasons, i.e., that they are not $1$-walk regular.

\begin{proposition} \label{prop:circulants not 1-walk regular}
 The circulant graph  $G_n := \textup{Cay}(\mathbb{Z}_{3n},\{1,n-1\})$ is $1$-walk regular if and only if $n\equiv -1 \mod 3$.
\end{proposition}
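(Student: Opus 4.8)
The plan is to reduce $1$-walk regularity to a single family of equalities among walk counts, prove the easy direction via an explicit automorphism, and prove the converse by locating the first walk length at which the counts disagree. First I would record the reduction. Write $c_t^{(s)} := (A^s)_{0,t}$ for the number of length-$s$ walks from $0$ to $t$ in $G_n = \textup{Cay}(\mathbb{Z}_{3n}, S)$ with $S = \{\pm 1, \pm(n-1)\}$. Because $G_n$ is a Cayley graph it is vertex-transitive and hence automatically $0$-walk regular; equivalently every $A^s$ is a circulant and so has constant diagonal. Since $A$ is a symmetric circulant, so is $A^s$, giving $c_t^{(s)} = c_{-t}^{(s)}$. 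The only edge-type displacements are $t \in \{\pm 1, \pm(n-1)\}$, so by the criterion recorded just before Theorem~\ref{thm:1-walk regular}, $G_n$ is $1$-walk regular if and only if $c_1^{(s)} = c_{n-1}^{(s)}$ for every $s \ge 0$.

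For the direction $n \equiv -1 \pmod 3 \Rightarrow$ $1$-walk regular, I would exhibit a graph automorphism carrying $1$ to $n-1$. When $n \equiv 2 \pmod 3$ one checks that $u := n-1$ is a unit of $\mathbb{Z}_{3n}$ with $u^2 \equiv 1 \pmod{3n}$, and that multiplication by $u$ permutes $S$ (it swaps $1 \leftrightarrow n-1$ and $-1 \leftrightarrow -(n-1)$). Hence $x \mapsto ux$ is an automorphism of $G_n$ fixing $0$ with $u\cdot 1 = n-1$; since automorphisms preserve walk counts, $c_1^{(s)} = c_{n-1}^{(s)}$ for all $s$, and $G_n$ is $1$-walk regular.

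For the converse I would show the counts first differ at length $s = n-1$. Encode walks by the generating function $P(z) = z + z^{-1} + z^{n-1} + z^{1-n}$, so that $c_t^{(s)} = \sum_{D \equiv t \,(3n)} [z^D] P(z)^s$. Classifying a walk by its number $j$ of $\pm(n-1)$-steps (the remaining $s-j$ being $\pm 1$-steps) and reducing the displacement modulo $3n$, the residue only sees the step counts modulo $3$; a short range estimate shows that for every $s < n-1$ no ``wrap-around'' of the displacement past $3n$ is possible, and in that regime the contributions to $c_1^{(s)}$ and to $c_{n-1}^{(s)}$ are identical binomial sums, so $c_1^{(s)} = c_{n-1}^{(s)}$. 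At $s = n-1$ exactly one additional wrap-around term appears on each side, and after simplification the difference collapses to
\[
c_1^{(n-1)} - c_{n-1}^{(n-1)} = B_1 - B_{(n-1)\bmod 3}, \qquad B_r := \sum_{i \equiv r \,(3)} \binom{n-1}{i}.
\]
A roots-of-unity filter gives $B_r = \tfrac13\bigl(2^{\,n-1} + 2\cos(\pi(n-1-2r)/3)\bigr)$, from which $B_1 = B_2$ forces $n-1 \equiv 0 \pmod 3$ and $B_0 = B_1$ forces $n-1 \equiv 1 \pmod 3$. Thus when $n \equiv 0$ or $n \equiv 1 \pmod 3$ the displayed difference is nonzero, so $G_n$ fails to be $1$-walk regular.

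I expect the main obstacle to be the bookkeeping in the converse: carefully justifying that no wrap-around occurs for $s < n-1$, isolating the precise new terms at $s = n-1$ (tracking the parity of $n-1$, which governs which middle binomial term appears on each side), and confirming that these contributions cancel down to the clean expression $B_1 - B_{(n-1)\bmod 3}$. Once that reduction is in hand, the non-vanishing of the relevant difference of the $B_r$ is a routine roots-of-unity computation.
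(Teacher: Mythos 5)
Your overall strategy coincides with the paper's: for $n\equiv -1\pmod 3$ the map $x\mapsto (n-1)x$ is precisely the automorphism behind the edge-transitivity criterion the paper cites from Poto\v{c}nik--Wilson (since $(n-1)^2\equiv 1\pmod{3n}$ exactly when $n\equiv 2\pmod 3$), and for the converse the paper also compares the number of length-$(n-1)$ walks from $0$ to $1$ with those from $0$ to $n-1$, asserting (without details) that they differ by $\pm 1$. Your executed count is consistent with that: writing a walk's displacement as $k+m(n-1)$ with $k$ the net number of $\pm1$-steps and $m$ the net number of $\pm(n-1)$-steps, the terms with $n\nmid(k-m\mp 1)$ contribute $B_1$ to $c_1^{(n-1)}$ and $B_0$ to $c_{n-1}^{(n-1)}$ while the remaining terms cancel under the swap $(k,m)\leftrightarrow(m,k)$; since $B_0=B_{(n-1)\bmod 3}$ by $\binom{n-1}{i}=\binom{n-1}{n-1-i}$, this is your formula, and it evaluates to $\pm1$ for $n\not\equiv 2\pmod 3$ (e.g.\ for $n=6$ one gets $120$ versus $121$).

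One intermediate claim is false as stated, however: it is not true that for $s<n-1$ no walk has integer displacement wrapping past $3n$. For $n\ge 9$ and $s=7$, the walk consisting of four $+1$-steps and three $+(n-1)$-steps has displacement $4+3(n-1)=3n+1$, so it contributes to $c_1^{(7)}$ with a wrap-around. The correct statement is that for $|k|+|m|\le s<n-1$ the congruence $k+m(n-1)\equiv 1\pmod{3n}$ forces $k=m+1$ with $m\equiv 0\pmod 3$ (the integer $j$ in $k+m(n-1)=1+3nj$ may be nonzero), and symmetrically for the target $n-1$; equality of the two counts then follows from the bijection $(m+1,m)\mapsto(m,m+1)$ rather than from absence of wrap-around. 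Relatedly, at $s=n-1$ there is not ``exactly one additional term on each side'' but a whole family indexed by the number of $(n-1)$-steps, which is what produces the sums $B_1$ and $B_0$. Neither issue is fatal --- indeed the $s<n-1$ analysis is not needed at all, since exhibiting a single $s$ with $c_1^{(s)}\ne c_{n-1}^{(s)}$ suffices --- but the wrap-around lemma should be either dropped or replaced by the divisibility argument above before the proof is written out.
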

\begin{proof}

It is known (see for instance \cite[Theorem 5.1]{potovcnik2020recipes}) that $\textup{Cay}(\mathbb{Z}_N,\{1,k\})$ is edge-transitive if $k^2 \equiv \pm 1 \mod N$. If $n = 3k-1$, we have that $(n-1)^2 \equiv \pm 1 \mod n$ so $G$ is edge-transitive and hence, $1$-walk regular.
For the other cases, if we consider the number of paths of size $n-1$ from $0$ to $1$ and from $0$ to $n-1$ in $G_n$, a straightforward but somewhat cumbersome count shows that they differ by $\pm 1$ so the graphs are not $1$-walk regular.
\end{proof}

Propositions~\ref{prop:infinite circulants} and \ref{prop:circulants not 1-walk regular} answer a question raised in \cite{steinthomas}, as to whether there are infinitely many conformally rigid circulants that are not edge-transitive.

\section{A semidefinite programming interpretation}

In closing, we connect the certificate for conformal rigidity  from a symmetrized embedding to the  semidefinite programming (SDP) check for conformal rigidity (\S 3 and \S 6.3 in  \cite{steinthomas}). The connection to semidefinite programming can be observed from the set $C(\Psi,\lambda)$. We use $\mathcal{S}^k$ to denote the set of $k \times k$ symmetric matrices with entries in $\RR$, and $\mathcal{S}^k_+$ to denote its subset of positive semidefinite (psd) matrices.

\begin{lemma}
Suppose $G$ is vertex-transitive with respect to a subgroup $\Psi$ of $\textup{Aut}(G)$. Let $\varphi^1,\ldots,\varphi^k$ be a basis for $\mathcal{E}_{\lambda}$, $\lambda > 0$, forming the columns of a matrix $U$. Then
\begin{equation}\label{eq:sdpformulation}
C(\Psi,\lambda)=\left\{\left( \sum_{\sigma \in \Psi} Y_{\sigma(i) \sigma(j)} \right)_{\mathcal{EO}_\Psi} \, : \, Y=UXU^\top, X \in \mathcal{S}_+^k\right\}.
\end{equation}
\end{lemma}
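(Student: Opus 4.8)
The plan is to realize both sides as images of the positive semidefinite cone under a single linear map, and then match them by tracking rank-one matrices. Since $\varphi^1,\dots,\varphi^k$ form a basis of $\mathcal{E}_\lambda$, the matrix $U$ has full column rank and every $\varphi \in \mathcal{E}_\lambda$ is $\varphi = Ua$ for a unique $a \in \RR^k$, so its outer product factors as $\varphi\varphi^\top = U(aa^\top)U^\top$ with $aa^\top \in \mathcal{S}_+^k$. The key identity is that the coordinate of $\varphi_\Psi$ on the orbit of an edge $(i,j)$ is $\sum_{\sigma\in\Psi}\varphi_{\sigma(i)}\varphi_{\sigma(j)} = \sum_{\sigma\in\Psi}(\varphi\varphi^\top)_{\sigma(i)\sigma(j)}$. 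Introducing the linear map $\Phi(Y) := \big(\sum_{\sigma\in\Psi}Y_{\sigma(i)\sigma(j)}\big)_{\mathcal{EO}_\Psi}$, which is well defined on orbits because summing over all of $\Psi$ is invariant under reindexing by a group element, we obtain $\varphi_\Psi = \Phi(\varphi\varphi^\top) = \Phi\big(U(aa^\top)U^\top\big)$. Thus the right-hand side is exactly the image $\Phi(UXU^\top)$ as $X$ ranges over $\mathcal{S}_+^k$.

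With this correspondence the inclusion $C(\Psi,\lambda)\subseteq\{\Phi(UXU^\top):X\in\mathcal{S}_+^k\}$ is immediate: each generator $\varphi_\Psi$ with $\|\varphi\|=1$ equals $\Phi(UXU^\top)$ for the rank-one $X=aa^\top\succeq 0$, and since the right-hand side is the image of the convex cone $\mathcal{S}_+^k$ under the linear map $X\mapsto\Phi(UXU^\top)$ it is convex, hence contains the convex hull of all such generators. For the reverse inclusion I would diagonalize an arbitrary $X = \sum_r\mu_r a_ra_r^\top$ with $\mu_r\ge 0$; linearity then gives $\Phi(UXU^\top)=\sum_r\mu_r\|Ua_r\|^2\,(\widehat{\psi}^{\,r})_\Psi$, where $\widehat{\psi}^{\,r}:=Ua_r/\|Ua_r\|$ has unit norm and lies in $\mathcal{E}_\lambda$. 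This exhibits $\Phi(UXU^\top)$ as a nonnegative combination of genuine generators of $C(\Psi,\lambda)$ with total weight $\sum_r\mu_r\|Ua_r\|^2 = \langle U^\top U,X\rangle$.

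The step I expect to require the most care is the normalization. The set $C(\Psi,\lambda)$ is compact, being a convex hull of images of the unit sphere of $\mathcal{E}_\lambda$, whereas the unrestricted image $\{\Phi(UXU^\top):X\in\mathcal{S}_+^k\}$ is a convex cone; the two coincide precisely after intersecting $\mathcal{S}_+^k$ with the normalizing hyperplane $\langle U^\top U,X\rangle=1$ (equivalently $\textup{tr}(X)=1$ when $U$ has orthonormal columns). This hyperplane is exactly the condition $\|\varphi\|=1$ transported through $\varphi=Ua$, and under it the total weight $\sum_r\mu_r\|Ua_r\|^2$ from the previous paragraph equals $1$, turning the nonnegative combination into a convex combination and placing $\Phi(UXU^\top)$ in $C(\Psi,\lambda)$. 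I would therefore carry the constraint $\langle U^\top U,X\rangle=1$ alongside $X\in\mathcal{S}_+^k$; the homogeneity $X\mapsto tX$, $\varphi_\Psi\mapsto t\varphi_\Psi$ then identifies the normalized right-hand side with $C(\Psi,\lambda)$, which is the form in which the equality feeds the semidefinite-programming reformulation of conformal rigidity.
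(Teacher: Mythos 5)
Your proof is correct and follows essentially the same route as the paper: both arguments factor $X\in\mathcal{S}_+^k$ into rank-one pieces (the paper writes $X=AA^\top$, you diagonalize it) and use linearity of the orbit-sum map $\Phi$ to identify $\Phi(UXU^\top)$ with a nonnegative combination of vectors $\psi_\Psi$. Where you diverge is the normalization, and there you are actually more careful than the paper: as literally written the right-hand side of the lemma is a convex cone (closed under $X\mapsto tX$ and containing $0$), while $C(\Psi,\lambda)$ is the compact convex hull of $\varphi_\Psi$ over \emph{unit} vectors, so the stated equality really requires the constraint $\textup{tr}(UXU^\top)=\langle U^\top U,X\rangle=1$ (or reading $C(\Psi,\lambda)$ as its cone). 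The paper's proof asserts $\sum_\ell\psi^\ell_\Psi\in C(\Psi,\lambda)$ without checking that $\sum_\ell\|\psi^\ell\|^2=1$ and only introduces the trace condition in the subsequent Proposition; your explicit bookkeeping of the total weight $\sum_r\mu_r\|Ua_r\|^2$ closes that small gap.
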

\begin{proof}
Note that $Y=UXU^\top$ for some $X \in \mathcal{S}^k_+$ if and only if $Y=(UA)(UA)^T$ for some matrix $A = (a_{ij})$. This means that $Y = \sum_{\ell=1}^k \psi^\ell (\psi^\ell)^T$ where  
$$\psi^\ell := \sum_{i=1}^k a_{i\ell} \varphi^i.$$ 
Therefore, 
$$\left( \sum_{\sigma \in \Psi} Y_{\sigma(i) \sigma(j)} \right)_{\mathcal{EO}_\Psi}= \sum_{\ell=1}^k \psi_\Psi^\ell \in C(\Psi,\lambda).$$
On the other hand, for any $\varphi \in \mathcal{E}_{\lambda}$ we can write $\varphi = \sum a_i \varphi^i$, and by taking $X=aa^T$ we see that $\varphi_\Psi$ is in the right hand side of \eqref{eq:sdpformulation}. By convexity we get the opposite inclusion, proving equality.    
\end{proof}

Theorem \ref{thm:embeddingcharacterization} can now be translated into the language of SDP feasibility.

\begin{proposition}\label{prop:sdpformulation}
    Suppose $G$ is vertex-transitive with respect to a subgroup $\Psi$ of $\textup{Aut}(G)$, and $\lambda>0$. Let $(i_1,j_1),\ldots,(i_\ell,j_\ell)$ be representatives of edge orbits of  $G$ under $\Psi$. Then $G$ has an edge-isometric embedding on $\mathcal{E}_\lambda$ if and only there exists $X \in \mathcal{S}_+^k$ such that $\textup{tr}(UXU^\top)=1$ and 
    $$\sum_{\sigma \in \Psi} (UXU^\top)_{\sigma(i_1) \sigma(j_1)} = \sum_{\sigma \in \Psi} (UXU^\top)_{\sigma(i_k) \sigma(j_k)}, \textrm{ for } k=2,...,\ell.  $$
\end{proposition}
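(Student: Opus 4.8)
The plan is to combine the convex-geometric characterization of Theorem~\ref{thm:embeddingcharacterization} with the SDP description of $C(\Psi,\lambda)$ provided by the preceding lemma. By Theorem~\ref{thm:embeddingcharacterization}, $G$ has an edge-isometric embedding on $\mathcal{E}_\lambda$ if and only if the line $t\mathbf{1}$ meets $C(\Psi,\lambda)$; by the lemma, every point of $C(\Psi,\lambda)$ has the form $\left(\sum_{\sigma\in\Psi}(UXU^\top)_{\sigma(i)\sigma(j)}\right)_{\mathcal{EO}_\Psi}$ for some $X\in\mathcal{S}^k_+$. So the first step is simply to unwind what it means for such a point to lie on the line $t\mathbf{1}$: all of its coordinates, indexed by the edge orbits, must be equal. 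Choosing orbit representatives $(i_1,j_1),\ldots,(i_\ell,j_\ell)$, this equality of coordinates is exactly the stated system of $\ell-1$ linear equations $\sum_{\sigma\in\Psi}(UXU^\top)_{\sigma(i_1)\sigma(j_1)}=\sum_{\sigma\in\Psi}(UXU^\top)_{\sigma(i_k)\sigma(j_k)}$ for $k=2,\ldots,\ell$.

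Second, I would handle the normalization. The characterization in Theorem~\ref{thm:embeddingcharacterization} involves the line $t\mathbf{1}$ for all $t$, so a priori any $X\in\mathcal{S}^k_+$ producing equal coordinates gives a feasible embedding, regardless of scale. The constraint $\textup{tr}(UXU^\top)=1$ must therefore be shown to be harmless: it only pins down the common value $t$ (equivalently the edge length $c$) and can always be met by rescaling $X$, provided we rule out the degenerate case $X=0$. I would argue that if some nonzero $X\succeq 0$ yields equal orbit-coordinates, then since the columns of $U$ are linearly independent, $UXU^\top\neq 0$, so $\textup{tr}(UXU^\top)>0$ (it is a sum of squared norms of the vectors $\psi^\ell$ from the lemma's proof), and rescaling $X\mapsto X/\textup{tr}(UXU^\top)$ preserves both positive semidefiniteness and the linear equalities while achieving trace $1$. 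Conversely, a trace-$1$ feasible $X$ directly produces a point on the line $t\mathbf{1}$ in $C(\Psi,\lambda)$.

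The one point requiring care is that Theorem~\ref{thm:embeddingcharacterization} guarantees a genuine edge-isometric embedding only when the edge length is positive, i.e.\ the common coordinate value $t$ is strictly less than the squared vertex norm (so that $\|p_i-p_j\|^2>0$). I would note that this strict positivity is automatic: as established in the proofs of Lemma~\ref{lem:edge-transitive} and Theorem~\ref{thm:eigevec certificate}, an edge of zero symmetrized length would force the underlying eigenvector to be constant on connected components, contradicting $\lambda>0$. Thus no feasible nonzero $X$ can yield a degenerate embedding, and the equivalence is clean.

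I expect the main obstacle to be bookkeeping rather than mathematical depth: one must be careful that the map $X\mapsto\left(\sum_\sigma(UXU^\top)_{\sigma(i)\sigma(j)}\right)$ is set up so that equality of the listed coordinates for orbit \emph{representatives} genuinely captures equality across the entire index set $\mathcal{EO}_\Psi$, which rests on the earlier observation that the quantity~\eqref{eq:edge quantity} depends only on the $\Psi$-orbit of the edge. Granting that, the proof reduces to the two short implications above.
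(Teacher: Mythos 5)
Your argument is correct and is exactly the route the paper takes: the paper states Proposition~\ref{prop:sdpformulation} as a direct ``translation'' of Theorem~\ref{thm:embeddingcharacterization} via the preceding lemma's SDP description of $C(\Psi,\lambda)$, with the trace condition noted only as a way to exclude $X=0$. Your additional care about rescaling to trace one and ruling out zero edge length just makes explicit the details the paper leaves implicit.
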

Note that the condition on the trace is to avoid the trivial case $X=0$, that does not correspond to an embedding. This is a SDP feasibility problem with number of constraints equal to the 
number of edge orbits of $G$ under $\Psi$. This gives us another case where the sufficient condition given by Corollary \ref{cor:generaleigenvectorcriterion} is actually necessary.

\begin{theorem} \label{thm:twoedge}
    Suppose $G$ is vertex-transitive with respect to a subgroup $\Psi$ of $\textup{Aut}(G)$, with at most two edge orbits with respect to that action. Then $G$ has an edge-isometric embedding on $\mathcal{E}_{\lambda}$ with $\lambda>0$ if and only if there exists $\varphi \in \mathcal{E}_{\lambda}\setminus \{0\}$ such that $\varphi_\Psi$ is a constant vector.
\end{theorem}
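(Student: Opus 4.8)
The plan is to prove both implications, with the forward (``if'') direction essentially immediate and the reverse (``only if'') direction being where the two-orbit hypothesis does the real work.

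For the ``if'' direction I would argue as follows. Suppose $\varphi \in \mathcal{E}_\lambda \setminus \{0\}$ has $\varphi_\Psi$ a constant vector. Since the map $\varphi \mapsto \varphi_\Psi$ is homogeneous of degree two, rescaling to $\|\varphi\|=1$ does not affect constancy of $\varphi_\Psi$, and then Theorem~\ref{thm:eigevec certificate} applies verbatim to produce an edge-isometric ($\Psi$-symmetrized) embedding on $\mathcal{E}_\lambda$; the needed positivity $\|p_i - p_j\|>0$ is automatic there because $\lambda>0$. This direction uses nothing about the number of edge orbits.

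For the ``only if'' direction I would invoke Theorem~\ref{thm:embeddingcharacterization}: an edge-isometric embedding on $\mathcal{E}_\lambda$ exists if and only if the line $t\mathbf{1}$ meets the convex set $C(\Psi,\lambda) \subseteq \RR^{\mathcal{EO}_\Psi}$. If there is a single orbit the statement is trivial, since then $\RR^{\mathcal{EO}_\Psi}=\RR$ and every $\varphi_\Psi$ is automatically a multiple of $\mathbf{1}$ (and an embedding exists by Lemma~\ref{lem:edge-transitive}). So I would assume exactly two orbits, with representatives $(i_1,j_1)$ and $(i_2,j_2)$, so that $C(\Psi,\lambda)\subseteq\RR^2$ and $t\mathbf{1}$ is the diagonal. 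I would then introduce the real quadratic form
\[ g(\varphi) \;=\; \sum_{\sigma \in \Psi} \varphi_{\sigma(i_1)}\varphi_{\sigma(j_1)} \;-\; \sum_{\sigma \in \Psi} \varphi_{\sigma(i_2)}\varphi_{\sigma(j_2)}, \]
so that $\varphi_\Psi$ is constant exactly when $g(\varphi)=0$. That the diagonal meets $C(\Psi,\lambda)$ means some convex combination $\sum_k a_k^2\,\varphi^k_\Psi$ of images of unit vectors lies on the diagonal, i.e. $\sum_k a_k^2\, g(\varphi^k)=0$ with $\sum_k a_k^2 = 1$.

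The crux is a Rayleigh-quotient argument. Restricted to the unit sphere of $\mathcal{E}_\lambda$, the form $g$ takes as its set of values exactly the closed interval $[\alpha,\beta]$ between the least and greatest eigenvalues of the symmetric matrix representing $g$ on $\mathcal{E}_\lambda$ (a single point in the degenerate one-dimensional case). Since each $g(\varphi^k)\in[\alpha,\beta]$ and $0$ is a convex combination of them, $\alpha\le 0\le\beta$, so $0$ is itself attained: there is a unit $\varphi\in\mathcal{E}_\lambda$ with $g(\varphi)=0$, which is the desired eigenvector with $\varphi_\Psi$ constant. The main obstacle, and the precise place the hypothesis enters, is that with two orbits ``constant'' is a \emph{single} scalar equation $g=0$, so passing from the convex hull back to the image set requires only that the range of one quadratic form on a connected sphere is an interval; with three or more orbits one would need several quadratic forms to vanish simultaneously, and this intermediate-value reasoning breaks down. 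I would also record the equivalent semidefinite viewpoint: by Proposition~\ref{prop:sdpformulation} the embedding exists iff some $X\in\mathcal{S}^k_+$ is feasible under two affine constraints (the trace normalization and one orbit-equality), and the Barvinok--Pataki bound then guarantees a feasible $X$ of rank at most $1$, i.e. $X=aa^\top$, whose associated $\varphi=Ua$ is the sought eigenvector.
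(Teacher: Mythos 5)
Your proof is correct, and your main argument for the nontrivial direction takes a genuinely different (and more self-contained) route than the paper's. The paper goes straight through Proposition~\ref{prop:sdpformulation}: the edge-isometric embedding yields a feasible point of an SDP with two affine constraints (the trace normalization plus one orbit-equality), and the Barvinok--Pataki rank bound is invoked as a black box to extract a rank-one feasible $X=aa^\top$, whence $\varphi=Ua$. Your primary argument instead stays with $C(\Psi,\lambda)$ and replaces the citation by an elementary intermediate-value argument: with two orbits, constancy of $\varphi_\Psi$ is the single scalar equation $g(\varphi)=0$ for a quadratic form $g$ on $\mathcal{E}_\lambda$, whose range on the unit sphere is the interval $[\alpha,\beta]$ between its extreme eigenvalues; since $0$ is a convex combination of values of $g$ at unit vectors, $0\in[\alpha,\beta]$ and is attained. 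This is in effect a direct proof of exactly the special case of Barvinok--Pataki that the paper uses, so the two arguments have the same mathematical content, but yours makes visible where the two-orbit hypothesis enters (one scalar equation versus a system of quadratic equations) and costs nothing beyond the spectral theorem, while the paper's is shorter by citation. Your treatment of the easy direction via Theorem~\ref{thm:eigevec certificate} (equivalently Theorem~\ref{thm:embeddingcharacterization}), and your separate handling of the one-orbit case via Lemma~\ref{lem:edge-transitive}, match the paper; your closing remark recording the SDP viewpoint is precisely the paper's proof. One small point worth a sentence in a final write-up: representing $g$ by a symmetric matrix requires fixing an orthonormal basis of $\mathcal{E}_\lambda$ so that the unit sphere of the eigenspace corresponds to the coordinate unit sphere; with that in place the interval claim is exactly the standard fact that $\left\{x^\top M x : \|x\|=1\right\} = [\lambda_{\min}(M),\lambda_{\max}(M)]$.
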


\begin{proof}
We just need to prove that if there is any edge-isometric embedding  we can produce such a $\varphi$, since the reciprocal implication is a consequence of Theorem \ref{thm:embeddingcharacterization}. 

Since there is an edge-isometric embedding, the SDP feasibility problem of Proposition \ref{prop:sdpformulation} has a solution. The simplest case of the Barvinok-Pataki bound \cite[Theorem 1.1]{barvinok} on the ranks of feasible solutions to SDPs, tells us that any feasible SDP with two equality constraints has a solution with rank one. Let $X=aa^t$ be that solution.
Then $\varphi=Ua=\sum a_i \varphi^i \in \mathcal{E}_{\lambda}$. Moreover the SDP constraints of Proposition \ref{prop:sdpformulation} translate to $\|\varphi\|=1$ and $\varphi_\Psi$ being constant,  proving the result.
\end{proof}
An example  where Theorem~\ref{thm:twoedge} applies is the circulant $\textup{Cay}(\mathbb{Z}_{18},\{1,5\})$.

\end{document}